\newcommand{\bfx}{{\mathbf x}}
\newcommand{\tnorm}[1]{{{|||} #1 {|||}}}
\newcommand{\jump}[1]{\llbracket #1 \rrbracket }
\def\eref#1{{\rm (\ref{#1})}}
\newtheorem{theorem}{Theorem}
\newtheorem{lemma}{Lemma}
\newtheorem{proposition}{Proposition}
\newtheorem{remark}{Remark}
\newtheorem{corollary}{Corollary}
\begin{document}

\title[Solving ill-posed control
problems by stabilized
  FEM]{Solving ill-posed control
problems by stabilized
  finite element methods: an alternative to Tikhonov regularization}

\author{Erik Burman$^1$, Peter Hansbo$^2$, Mats G. Larson$^3$}
 \address{$^1$Department of Mathematics, University College London, London, UK--WC1E 6BT, United Kingdom}
 \address{$^2$Department of Mechanical Engineering, J\"onk\"oping University,
SE-55111 J\"onk\"oping, Sweden}
 \address{$^3$Department of Mathematics and Mathematical Statistics,
Ume{\aa} University, SE--901 87 Ume{\aa}, Sweden}

\begin{abstract}
Tikhonov regularization is one of the most commonly used methods of regularization of ill-posed problems. In the setting
of finite element solutions of elliptic partial differential control problems, Tikhonov regularization amounts to adding suitably weighted
least squares terms of the control variable, or derivatives thereof, to the Lagrangian determining the optimality system.
In this note we show that stabilization methods for discretely ill--posed problems developed in the setting
of convection--dominated convection--diffusion problems, can be highly
suitable for stabilizing optimal control problems, and that Tikhonov
regularization will lead to less accurate discrete solutions. We
consider data assimilation problems for Poisson's equation as
illustration and derive new error estimates both for the the
reconstruction of the solution from measured data and reconstruction
of the source term from measured data. These estimates include both
the effect of discretization error and error in measurements.
\newline\newline
Keywords: optimal control problem,  source identification, finite elements, regularization
\end{abstract}

\maketitle

\section{Introduction}
In this note we propose an alternative to the classical Tikhonov regularization approach
in finite element approximations of optimal control problems governed by elliptic partial differential equations. We shall, following \cite{BKR00}, consider problems of the type
\begin{equation}
J(u,q)\rightarrow\text{min!},\quad A(u)=f+B(q),
\end{equation}
where  $J$ is a cost functional, $A$ is an elliptic differential operator for the state variable $u$, 
and $B$ an impact operator for the control variable $q$. Introducing costate variables $\lambda$, this problem can be formulated as finding saddle points to the Lagrangian functional
\begin{equation}
{\mathfrak L}(u,q,\lambda) := J(u,q) + (\lambda, A(u)-f-B(q)), 
\end{equation}
where $(\cdot,\cdot)$ denotes the $L_2$ inner product,
determined by the system
\begin{equation}
\left\{\begin{array}{>{\displaystyle}l}
\frac{d}{d\epsilon_1}\left.{\mathfrak L}(u+\epsilon_1 v,q,\lambda)\right|_{\epsilon_1 = 0} = 0 ,\\[2mm]
\frac{d}{d\epsilon_2}\left.{\mathfrak L}(u,q+\epsilon_2 r,\lambda)\right|_{\epsilon_2 = 0} = 0 ,\\[2mm]
\frac{d}{d\epsilon_3}\left.{\mathfrak L}(u,q,\lambda +\epsilon_3 \mu)\right|_{\epsilon_3 = 0} = 0 .\\[2mm]
\end{array}\right.\end{equation}
In a finite element setting, the continuous states, controls, and costates $(u,q,\lambda)\in V\times Q\times V$ are replaced by their discrete counterparts $(u^h,q^h,\lambda^h)\in V^h\times Q^h\times V^h$,
where $V^h$ and $Q^h$ are finite dimensional counterparts of the appropriate Hilbert spaces $V$ and $Q$, respectively.

Typically, the cost functional measures some distance between the discrete state and a known or sampled
function $u_0$ over a subdomain $\mathcal{M} \subseteq \Omega$ where
$\Omega \subset \mathbb{R}^d$, $d=2,3$ is a polyhedral (polygonal) domain of computation.
\begin{equation}
J(u,q) := \frac12 \| u- u_0\|^2_{\mathcal{M}}
\end{equation}
which may not lead to a well posed problem. A classical regularization method due to Tikhonov, see \cite{ItJi15}, is to add
a stabilizing functional $n(q,q)$,
\begin{equation}
J(u,q) := \frac12 \| u- u_0\|^2_{\mathcal{M}} + n(q,q) ,
\end{equation}
where, typically,
\begin{equation}
n(q,q) :=  \alpha \| q - q_b\|^2 +\beta  \| \nabla (q - q_b)\|^2 ,
\end{equation}
with $\alpha$ and $\beta$ regularization parameters and $q_b$ the background state, or first guess state. The role of the background state
is to diminuish the nonconsistent character of the Tikhonov
regularization and implies additional a priori knowledge on the system
beyond the samples $u_0$. In this note we will assume that no such
additional a priori data are at hand, $q_b=0$ and that there is no
physical justification for the addition of the term $n(q,q)$, or that the size
of $\alpha$ and $\beta$ given by the application are too small to
provide sufficient stabilization of the system for computational
purposes. The objective of
this note is to show that these regularizations can be improved upon
in a finite element framework. The approach that we will follow is to eliminate the Tikhonov regularization on
the continuous level and instead regularize the discrete formulation 
hence making the regularization part of the computational
method in the form of a weakly consistent stabilization.

The terminology stabilization versus regularization is slightly ambiguous but in classical numerical analysis
the method of modified equations \cite{GrSa86} provides a link between these concepts.

\subsection{Model problems}
We will discuss two model problems below. First let $\mathcal{M}
\subset \Omega$ be a ball $B_{r_1}(\bf{x}_0)$ with radius $r_1$,
centered at $\bf{x}_0$ and assume that
measurements $u_0$ are available in this ball. We then wish to
reconstruct the solution $u$ in $B_{r_2}({\bf{x}}_0)$ under the a
priori assumption that $B_{r_2}({\bf{x}}_0) \subset
B_{r_3}({\bf{x}}_0)\subset \Omega$ and $u \in H^1(\Omega)$, is a weak solution to 
\[
-\Delta u = f, \mbox{ in } \Omega.
\]
This problem can be cast in the form of a
constrained minimization problem:
\begin{equation}\label{eq:minim}
\frac12\| u-u_0 \|^2_{L_2(\mathcal{M})} + \frac{\alpha}{2} \| \nabla u \|^2_{L_2(\Omega)}
\end{equation}
subject to
\begin{equation}\label{Poisson}
-\Delta u = f, \mbox{ in } \Omega.
\end{equation}
We will refer to this problem as the \emph{data assimilation problem}
below, but it is also strongly related to boundary control problems.
It is known that in this case if for $\alpha=0$ $u_0$ is such that a
solution exists, then by unique continuation of harmonic functions
this solution is unique. This statement can be quantified in the
following three sphere's inequality:
\begin{lemma}\label{3sphere}(Three spheres inequality)
Assume that $u:\Omega \rightarrow \mathbb{R}$ is a weak solution of
\eref{Poisson} with $f \in H^{-1}(\Omega)$ such that
$\|f\|_{H^{-1}(\Omega)} \leq \varepsilon$ for some $\varepsilon>0$.
For every $r_1,\, r_2,\,r_3,\, \overline r$ such that
$0<r_1<r_2<r_3<\overline r$ and for every ${\bf{x}}_0 \in \Omega$ such that
$\mbox{dist}({\bf{x}}_0,\partial \Omega) > \overline r$ there holds
\begin{equation}\label{3sphere_stab}
\|u\|_{L^2(B_{r_2})} \leq C \left(\|u\|_{L^2(B_{r_1})} + \varepsilon\right)^\tau \cdot \left(\|u\|_{L^2(B_{r_3})}+ \varepsilon\right)^{(1-\tau)}
\end{equation}
where $B_{r_i}$, $i=1,2,3$ are balls centered at ${\bf{x}}_0$, $C>0$ and $\tau$, $0<\tau<1$ only depend on the geometry of $\Omega$, and the ratios
$r_2/r_1$ and $r_3/r_2$.
\end{lemma}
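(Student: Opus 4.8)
The plan is to derive \eref{3sphere_stab} from a Carleman estimate for the Laplacian, which is the standard route to quantitative unique continuation and which automatically accommodates the low-regularity source $f \in H^{-1}(\Omega)$. After translating so that ${\bf{x}}_0$ is the origin and rescaling by $\overline r$, I would work with a radial weight $\phi(\bfx) = \psi(|\bfx|)$ of logarithmic type, chosen so that the conjugated operator $e^{\tau\phi}\Delta e^{-\tau\phi}$ satisfies a pseudoconvexity (sub-ellipticity) condition on the annulus $0 < |\bfx| < \overline r$. The resulting estimate has the shape $\tau^3 \int e^{2\tau\phi}|w|^2 + \tau \int e^{2\tau\phi}|\nabla w|^2 \le C \int e^{2\tau\phi}|\Delta w|^2$ for every $w$ compactly supported in the annulus and every $\tau$ beyond a threshold $\tau_0$. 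Since $\phi$ is radially monotone, the exponential weight cleanly separates the three scales $r_1 < r_2 < r_3$, and $C$, $\tau_0$, $\psi$ depend only on the rescaled geometry.

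The second step is localization. I would fix a cutoff $\chi$ supported in $B_{r_3}$, equal to one on an annulus surrounding the sphere $|\bfx| = r_2$ and dropping to zero across a thin shell inside $B_{r_1}$ and a thin shell just inside $|\bfx| = r_3$. Applying the Carleman estimate to $w = \chi u$ and expanding $\Delta(\chi u) = \chi \Delta u + [\Delta,\chi]u$, the commutator contains only first and zeroth derivatives of $\chi$ and is therefore supported in the two transition shells: the inner shell contributes a term controlled by $\|u\|_{H^1(B_{r_1})}$ weighted by $e^{2\tau\phi(r_1)}$, and the outer shell a term controlled by the $H^1$-norm near $r_3$ weighted by $e^{2\tau\phi(r_3)}$. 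The consistency term $\chi\Delta u = -\chi f$ is handled by duality against the weighted test function, producing the additive $\varepsilon$ contributions from $\|f\|_{H^{-1}(\Omega)} \le \varepsilon$. Restricting the left-hand side to the annulus where $\chi = 1$ and $\phi \ge \phi(r_2)$, adding back the uncovered core via $\|u\|_{L^2(B_{\rho_1})} \le \|u\|_{L^2(B_{r_1})}$ with $\rho_1 \le r_1$, and absorbing gradient terms by Caccioppoli-type interior estimates, yields a bound of the form
\[
e^{2\tau\phi(r_2)}\|u\|^2_{L^2(B_{r_2})} \le C\big(e^{2\tau\phi(r_1)}(\|u\|_{L^2(B_{r_1})}+\varepsilon)^2 + e^{2\tau\phi(r_3)}(\|u\|_{L^2(B_{r_3})}+\varepsilon)^2\big).
\]

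The final step is to optimize over $\tau$. Writing $A = \|u\|_{L^2(B_{r_1})}+\varepsilon$ and $B = \|u\|_{L^2(B_{r_3})}+\varepsilon$ and using $\phi(r_1) > \phi(r_2) > \phi(r_3)$, the two exponentials balance at the $\tau_\star$ for which $e^{2\tau_\star(\phi(r_1)-\phi(r_2))}A^2 \sim e^{-2\tau_\star(\phi(r_2)-\phi(r_3))}B^2$; substituting back produces the product $A^\tau B^{1-\tau}$ with $\tau = (\phi(r_2)-\phi(r_3))/(\phi(r_1)-\phi(r_3)) \in (0,1)$. As $\phi$ is a fixed function of $|\bfx|/\overline r$, the exponent $\tau$ and the constant $C$ depend only on the ratios $r_2/r_1$ and $r_3/r_2$, as claimed; for the logarithmic choice one finds explicitly $\tau = \log(r_3/r_2)/\log(r_3/r_1)$. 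When $\tau_\star$ falls below the admissible threshold $\tau_0$ one instead invokes the trivial bound $\|u\|_{L^2(B_{r_2})} \le \|u\|_{L^2(B_{r_3})}$, a standard endpoint adjustment valid precisely in the regime where $B \lesssim A$.

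I expect the main obstacle to be the first step: constructing the weight $\phi$ and verifying the pseudoconvexity condition that keeps the Carleman constant uniform in $\tau$, together with carefully tracking that the commutator errors are confined to the designated shells so that no interior mass of $u$ leaks into the error terms. An alternative that avoids Carleman weights is Almgren's frequency function $N(r) = r\int_{B_r}|\nabla u|^2/\int_{\partial B_r}u^2$, whose almost-monotonicity (with $\varepsilon$-errors generated by $f$) gives log-convexity of $r \mapsto \log\int_{\partial B_r}u^2$ and hence \eref{3sphere_stab} after integration in $r$; this is cleaner in the homogeneous case but demands more care to quantify the source perturbation.
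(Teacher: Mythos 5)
The paper does not actually prove this lemma: its ``proof'' is a one-line citation to Alessandrini et al.\ \cite[Theorem 1.10]{ARRV09}, so there is no internal argument to compare against. Your Carleman-estimate outline is the standard strategy underlying that reference, and its skeleton --- radial pseudoconvex weight, cutoff localization with commutator errors confined to two shells, optimization of the large parameter, and the endpoint adjustment via the trivial bound $\|u\|_{L^2(B_{r_2})}\le\|u\|_{L^2(B_{r_3})}$ --- is sound; the exponent $\tau=\log(r_3/r_2)/\log(r_3/r_1)$ you extract is the correct Hadamard-type value and depends only on the stated ratios.

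The genuine gap is the treatment of the source term. The Carleman inequality you write down has $\int e^{2\tau\phi}|\Delta w|^2$ on the right-hand side, but for $w=\chi u$ one has $\chi\Delta u=-\chi f$ with $f$ only in $H^{-1}(\Omega)$, so that integral is not even finite and the estimate cannot be applied as stated; ``handled by duality against the weighted test function'' does not repair this, because the dual pairing would have to be absorbed into the left-hand side of the Carleman estimate, which requires a genuinely different (shifted) Carleman inequality whose right-hand side is a weighted $H^{-1}$-type norm. Either prove that strengthened estimate, or --- much more economically --- reduce to the homogeneous case first: let $v\in H^1_0(B_{\overline r})$ solve $-\Delta v=f$ in $B_{\overline r}$, so that $\|v\|_{L^2(B_{\overline r})}\le C\|f\|_{H^{-1}(\Omega)}\le C\varepsilon$, apply the three-spheres inequality for the harmonic function $u-v$, and recover \eref{3sphere_stab} with the additive $\varepsilon$ terms by the triangle inequality. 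The second acknowledged obstacle, verifying pseudoconvexity of the logarithmic weight, is real but routine for the pure Laplacian (one perturbs $-\log|\bfx|$ slightly to restore strict sub-ellipticity); it is the $H^{-1}$ issue that would actually stop your argument as written.
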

\begin{proof}
For a proof in the non-homogeneous case considered here see
Allessandrini et al. \cite[Theorem 1.10]{ARRV09}.
\end{proof}
\begin{remark}
We do not track the exact form of the geometric constants that appear in the proof of
Lemma \ref{3sphere}. They are all considered included in
the canonical constant $C$ above. For the precise definition of the
result with all exact dependencies we refer to \cite{ARRV09}.
\end{remark}
Assume that $\Omega \subset \mathbb{R}^d$ with $B_{r_3}(\bf{x}_0)
\subset \Omega$.
Introducing
the Lagrange multiplier $\lambda\in H^1_0(\Omega)$, we have the optimality system: find
$(u,\lambda)\in H^1(\Omega)\times H^1_0(\Omega)$ such that
\begin{equation}\label{eq:eq1a}
\int_{\mathcal{M}} u \, v \, d\Omega +\int_{\Omega} \nabla\lambda\cdot\nabla v\, d\Omega = \int_{\mathcal{M}} u_0\, v \, d\Omega\quad \forall v\in H^1(\Omega) ,
\end{equation}
\begin{equation}\label{eq:eq3a}
\int_{\Omega} \nabla u\cdot\nabla \mu \, d\Omega = \int_{\Omega} f\, \mu \, d\Omega\quad \forall \mu\in H^1_0(\Omega).
\end{equation}
The difficulty in this case is that the equation \eref{eq:eq3a} is
ill-posed, since $u \in H^1(\Omega)$ and $\mu \in H^1_0(\Omega)$. It
follows that coercivity fails for $u$ in $H^1(\Omega)$. Clearly this
is not the case when $\alpha>0$, but then a nonconsistent perturbation
is added to the system that can not easily be quantified. Below we
will instead add a regularization on the discrete level. Indeed we
will penalize the fluctuations of the gradient over element faces and
show that the added coercivity on the high frequency content of the
solution is sufficient to obtain a priori estimates leading to error
estimates through Lemma \ref{3sphere}. This part of the analysis
draws on earlier ideas for the elliptic Cauchy problem from
\cite{Bu13,Bu14b}. Below we will assume that $u_0$ is the unperturbed
measurement for which the unique solution exists and consider a
numerical method with perturbed data.

Our second example considers the case where the data is available in the
whole domain, $\mathcal{M}\equiv \Omega$, but the source term is
unknown and must be reconstructed. The challenge here being that the
application of the Laplacian is unstable. This case will be referred
to as \emph{source reconstruction} below, but is also related to
a distributed control problem.
We consider the elementary problem: minimize
\begin{equation}\label{eq:minim2}
\frac12\| u-u_0 \|^2_{L_2(\Omega)} + \frac{\alpha}{2} \| q \|^2_{L_2(\Omega)}
\end{equation}
subject to 
\begin{equation}\label{eq:constraint}
\text{$-\Delta u = q$ in $\Omega$, $u = 0$ on $\partial\Omega$}.
\end{equation}
Here, $u_0$ is known data and $\Omega$ is a convex polygonal
(polyhedral) subset of $\mathbb{R}^d$, $d=2,3$, with outword pointing
normal $n$. We assume that we wish to solve
\eref{eq:minim2}-\eref{eq:constraint} in the situation where $u_0$ is
a measurement on a system that is of the form \eref{eq:constraint}.
This means that if no perturbations are present in the data and measurements
are available in every point of $\Omega$, the minimizer for $\alpha=0$ is $u = u_0
\in H^1_0(\Omega) \cap
H^2(\Omega)=:W$
and an associated $q = -\nabla^2 u\in L^2(\Omega)$ exists so that
\eref{eq:constraint} is satisfied. Assume also that the shift theorem
$\|u\|_{H^2(\Omega)} \leq C \|q\|_{L^2(\Omega)}$ holds. Below we will
consider the problem of reconstructing $q$ from $u_0$ accounting for
both the discretization error and the error due to errors in the
measured data $u_0$.

Introducing
the Lagrange multiplier $\lambda\in H^1_0(\Omega)$, we have the optimality system: find
$(u,q,\lambda)\in H^1_0(\Omega)\times L_2(\Omega)\times H^1_0(\Omega)$ such that
\begin{equation}\label{eq:eq1}
\int_{\Omega} u \, v \, d\Omega +\int_{\Omega} \nabla\lambda\cdot\nabla v\, d\Omega = \int_{\Omega} u_0\, v \, d\Omega\quad \forall v\in H^1_0(\Omega) ,
\end{equation}
\begin{equation}\label{l2p}
\alpha \int_{\Omega} q \, r \, d\Omega +\int_{\Omega} \lambda\, r \, d\Omega = 0 \quad \forall r\in L_2(\Omega) ,
\end{equation}
\begin{equation}\label{eq:eq3}
\int_{\Omega} \nabla u\cdot\nabla \mu \, d\Omega = \int_{\Omega} q\, \mu \, d\Omega\quad \forall \mu\in H^1_0(\Omega).
\end{equation}
We note here that the trace of $\lambda$ is zero on the boundary but that this does not affect $q$ in the infinite dimensional case.
However, when we discretize and solve for discrete counterparts ($u^h,\lambda^h, q^h$), a problem arises in that the finite dimensionality of the
problem forces the zero boundary condition on $\lambda^h$ onto $q^h$
via the $L_2$--projection in (\ref{l2p}). This has profound
implications for the accuracy close to the boundary of the control $q^h$. As a remedy for
this we propose to introduce the regularization of $q$ in the discrete
setting so that it acts only on fluctuations of the gradient of the
source term, with a particular scaling in the mesh-size. Since the
kernel of this operator is so big, the stability of the system
\eref{eq:eq1}-\eref{eq:eq3} is compromised. We therefore also introduce a
stabilization of the type suggested above, where the jumps of
the gradient of $u$ are penalized as well. This is
sufficient to make the system inf-sup stable in a suitable
discrete norm as we shall see below.

\section{Derivation of the discrete model}

Let $\{\mathcal{T}_h\}_h$ denote a family of shape regular and quasi uniform
tesselations of $\Omega$ into nonoverlapping simplices, such that for
any two different simplices $K$, $K' \in \mathcal{T}_h$, $K \cap
K'$ consists of either the empty set, a common face/edge or a common
vertex. The outward pointing normal of a simplex $K$ will be
denoted $n_{K}$. We denote the set of interior element faces $F$ in $\mathcal{T}_h$ by
$\mathcal{F}_I$. To each face we associate a normal $n_F$ whose
orientation is arbitrary but fixed. We define the standard finite element space of continuous
piecewise affine functions on $\mathcal{T}_h$
\[
V_h := \{v_h \in C^0(\overline \Omega) : v_h \vert_K \in P_1(K), \, \forall K
\in \mathcal{T}_h\},
\]
where $P_1(K)$ denotes the set of affine functions on $K$. We also
define $V_h^0 := V_h \cap H^1_0(\Omega)$.
\subsection{Data assimilation}
Consider the discrete
formulation: find $u_h,\lambda_h \in V_h\times V_h^0$ such that
\begin{equation}\label{eq:da1}
m_{\mathcal{M}}(u_h,v_h)+s_1(u_h,v_h)+ 
a_h(v_h,\lambda_h)
= m_{\mathcal{M}}(\tilde u_0,v_h)\quad \forall v_h \in V_h,
\end{equation}
\begin{equation}\label{eq:da2}
a_h(u_h,\mu_h) = m_{\Omega}(
\tilde f, \mu_h )\quad \forall \mu_h \in V_h^0,
\end{equation}
where $\tilde f := f+\delta f$ and
$\tilde u_0 := u_0 + \delta u_0$,  with $\delta f \in H^{-1}(\Omega)$ and
$\delta u_0 \in L^2(\mathcal{M})$ denote
measurement errors in the source term and data. The bilinear forms are given by
\begin{equation}\label{mform}
m_{X}(u_h,v_h) := \int_{X} u_h \, v_h \, d\Omega, \mbox{ for } X
\subseteq \Omega,
\end{equation}
\begin{equation}\label{sform}
s_i(u_h,v_h):= \sum_{F \in \mathcal{F}_{I}}
\gamma \int_F h^i_F \jump{\nabla u_h \cdot n_F} \jump{\nabla v_h \cdot n_F}
~\mbox{d}s
\end{equation}
where $\jump{y_h}\vert_F := \lim_{\epsilon \rightarrow 0^+}
(y_h(x-\epsilon n_F) -y_h(x+\epsilon n_F))$ denotes the jump of the quantity $y_h$ over
the face $F$, with normal $n_F$ and finally
\begin{equation}\label{aform}
a_h(v_h,\lambda_h):=\int_{\Omega}
\nabla v_h\cdot\nabla \lambda_h\, d\Omega. 
\end{equation}
This may then be written on the compact form, find $u_h, \lambda_h \in \mathcal{V}^{DA}_h$, with $\mathcal{V}^{DA}_h :=  V_h \times V_h^0$ such that
\begin{equation*}
A_{DA}[(u_h,\lambda_h),(v_h,\mu_h)] = m_{\Omega}(
\tilde f, \mu_h ) + m_{\mathcal{M}}(\tilde u_0,v_h),\quad \forall
v_h,\mu_h \in   \mathcal{V}^{DA}_h,
\end{equation*}
with
\begin{equation*}\begin{array}{ll}
A_{DA}[(u_h,\lambda_h),(v_h,\mu_h)] := & m_{\mathcal{M}}(u_h,v_h)+s_1(u_h,v_h) \\[2mm]& +a_h(v_h,\lambda_h) + a_h(u_h,\mu_h).
\end{array}\end{equation*}
\subsection{Source reconstruction}
Consider the discrete
formulation: find $u_h,q_h,\lambda_h \in V_h^0 \times V_h\times V_h^0$ such that
\begin{equation}\label{eq:crazy1}
m(u_h,v_h)+s_1(u_h,v_h)+ 
a_h(v_h,\lambda_h)
= m(u_0,v_h)\quad \forall v_h \in V_h^0,
\end{equation}
\begin{equation}\label{eq:crazy2}
m(\lambda_h,w_h) - s_5(q_h,w_h) = 0 \quad \forall w_h \in V_h
\end{equation}
and
\begin{equation}\label{eq:crazy3}
a_h(u_h,\mu_h) = m(
q_h, \mu_h )\quad \forall \mu_h \in V_h^0.
\end{equation}
with bilinear forms given by \eref{mform}, \eref{sform} and
\eref{aform} above. Below we will distinguish the stabilization
parameters of $s_1(\cdot,\cdot)$ and $s_5(\cdot,\cdot)$ and then
denote them by $\gamma_1$ and $\gamma_5$ respectively.

This may then be written on the compact form, find $u_h, q_h, \lambda_h \in \mathcal{V}_h^{SR}$, with $\mathcal{V}_h^{SR} :=  V_h^0 \times V_h \times V_h^0$ such that
\begin{equation*}
A_{SR}[(u_h,q_h,\lambda_h),(v_h,w_h,\mu_h)] = m(u_0,v_h),\quad \forall
v_h,\,r_h,\,\mu_h \in   \mathcal{V}_h^{SR},
\end{equation*}
with
\begin{equation*}\begin{array}{ll}
A_{SR}[(u_h,q_h,\lambda_h),(v_h,w_h,\mu_h)] := & m(u_h,v_h)+s_1(u_h,v_h) \\[2mm]& +a_h(v_h,\lambda_h) + 
s_5(q_h,w_h) - m(\lambda_h,w_h) \\[2mm]& - a_h(u_h,\mu_h) +m(q_h,\mu_h).
\end{array}\end{equation*}
\section{Preliminary technical results}
First we define the semi norms associated with the stabilization
operator $s_i(\cdot,\cdot)$,
\[
|x_h|_{s_i} = s(x_h,x_h)^{\frac12}, \quad \forall x_h\in V_h.
\]
We recall the following  well known inverse and trace inequalities 
\begin{equation}\label{trace}
\begin{array}{rcl}
\|v\|_{L^2(\partial  K) } \leq C_t (h^{-\frac12} \|v\|_{ K } +
h^{\frac12} \|\nabla v\|_{ K }), \quad \forall v \in H^1( K ) \\[2mm]
h_{ K }\|\nabla v_h\|_{ K } + h_ K ^{\frac12} \|v_h\|_{L^2(\partial  K) }
\leq C_i \|v_h\|_ K , \quad \forall v_h \in V_h.
\end{array}
\end{equation}
As an immediate consequence of \eref{trace} we have the following
stabilities for some $C_{si}>0$ depending only on the mesh geometry,
\begin{equation}\label{eq:s_stab}
|x_h|_{s_i} \leq C_{si} \|h^{\frac{i-3}{2}} x_h\|_{L^2(\Omega)},\quad
|x_h|_{s_i} \leq C_{si} \|h^{\frac{i-1}{2}} \nabla x_h\|_{L^2(\Omega)}.
\end{equation}
Let $i_h:H^2(\Omega)\rightarrow V_h$ denote the standard Lagrange
interpolant and $\pi_h:L^2 \rightarrow V_h$ and $\pi^0_h:L^2 \rightarrow
V^0_h$ the $L^2$-projections on the respective finite element spaces. The
following error estimate is known to hold both for $i_h$ and $\pi_h$,
\begin{equation}\label{eq:approx_standard}
\|u - i_h u\|_{L^2(\Omega)}    +h \|\nabla (u - i_h u)\|_{L^2(\Omega)}
\leq C h^{t} |u|_{H^t(\Omega)}, \, t=1,2.
\end{equation}

To prove stability of our formulations below we need to show that for any
function $v_h \in V_h$ the $L^2$-norm is equivalent to
$
\|\pi^0_h v_h\|_{L^2(\Omega)} + |v_h|_{s_3}.
$
We prove this result in this technical Lemma.
\begin{lemma}\label{lem:stab_bound}
There exists $C_p>0$ such that for all $v_h \in V_h$ there holds
\begin{equation}\label{Poincare}
h \|v_h\|_{H^1(\Omega)} \leq C_p(\|v_h\|_{L^2(\mathcal{M})} + |v_h|_{s_1}).
\end{equation}
There exists $c_1, c_2 >0$ such that for any function $v_h \in V_h$ 
\begin{equation}\label{norm_eq}
c_1 (\|\pi^0_h v_h\|_{L^2(\Omega)} + |v_h|_{s_3}) \leq
\|v_h\|_{L^2(\Omega)} \leq c_2 (\|\pi^0_h v_h\|_{L^2(\Omega)} + |v_h|_{s_3}).
\end{equation}
\end{lemma}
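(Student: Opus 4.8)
The plan is to treat the two inequalities separately, reducing each to a single substantive estimate.

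For \eref{Poincare} I would first peel off the easy directions. The inverse inequality \eref{trace} gives $h\|\nabla v_h\|_{L^2(\Omega)}\le C\|v_h\|_{L^2(\Omega)}$, while a Poincar\'e--Friedrichs inequality on the connected domain $\Omega$, using that $\mathcal M$ has positive measure, gives $\|v_h\|_{L^2(\Omega)}\le C(\|v_h\|_{L^2(\mathcal M)}+\|\nabla v_h\|_{L^2(\Omega)})$ for $v_h\in V_h\subset H^1(\Omega)$. Combining these, the whole statement reduces to the single gradient estimate $h\|\nabla v_h\|_{L^2(\Omega)}\le C(\|v_h\|_{L^2(\mathcal M)}+|v_h|_{s_1})$, the remaining terms being absorbed by a Young inequality. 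To obtain this I would integrate $\|\nabla v_h\|_{L^2(\Omega)}^2=\sum_K\int_K\nabla v_h\cdot\nabla v_h$ by parts elementwise; since $\Delta v_h|_K=0$ for piecewise affine $v_h$, the volume contributions vanish and, using the continuity of $v_h$, only the face terms survive:
\[
\|\nabla v_h\|_{L^2(\Omega)}^2=\sum_{F\in\mathcal F_I}\int_F v_h\,\jump{\nabla v_h\cdot n_F}\dss+\int_{\partial\Omega}v_h\,\nabla v_h\cdot n\dss .
\]
The interior sum is handled by a weighted Cauchy--Schwarz inequality: the factor $(\sum_F h_F\|\jump{\nabla v_h\cdot n_F}\|_{L^2(F)}^2)^{1/2}$ is exactly $\gamma^{-1/2}|v_h|_{s_1}$, while $(\sum_F h_F^{-1}\|v_h\|_{L^2(F)}^2)^{1/2}\le Ch^{-1}\|v_h\|_{L^2(\Omega)}$ by the trace and inverse inequalities \eref{trace}. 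This yields $\sum_{F}\int_F v_h\jump{\nabla v_h\cdot n_F}\dss\le Ch^{-1}\|v_h\|_{L^2(\Omega)}\,|v_h|_{s_1}$, and inserting the Poincar\'e--Friedrichs bound and applying Young's inequality closes the interior part.

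The main obstacle is the boundary term $\int_{\partial\Omega}v_h\,\nabla v_h\cdot n\dss$, which is \emph{not} controlled by the stabilization, since $s_1$ sums only over interior faces $\mathcal F_I$. The naive bound $\|v_h\|_{L^2(\partial\Omega)}\|\nabla v_h\cdot n\|_{L^2(\partial\Omega)}\le Ch^{-1}\|v_h\|_{L^2(\Omega)}\|\nabla v_h\|_{L^2(\Omega)}$ produces, after multiplication by $h$ and use of Poincar\'e--Friedrichs, a term of order $\|\nabla v_h\|_{L^2(\Omega)}^2$ that cannot be absorbed into the left-hand side $h\|\nabla v_h\|_{L^2(\Omega)}^2$ for small $h$: the boundary trace of $v_h$ is simply too large. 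I would therefore handle this term by a dedicated argument, either by testing the identity against $v_h-\phi$ with $\phi\in H^1(\Omega)$ chosen to share the boundary trace of $v_h$, so that the boundary contribution cancels and the volume term $\int_\Omega\nabla v_h\cdot\nabla\phi$ is estimated through a harmonic extension, or by replacing the integration-by-parts step altogether with a discrete compactness argument applied to the rescaled functions $h\,v_h$. Finding a bound for this boundary flux consistent with the weak $s_1$-scaling is the genuinely delicate point.

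For \eref{norm_eq} the left inequality is immediate: $\pi^0_h$ is an $L^2$-orthogonal projection, so $\|\pi^0_h v_h\|_{L^2(\Omega)}\le\|v_h\|_{L^2(\Omega)}$, and \eref{eq:s_stab} with $i=3$ gives $|v_h|_{s_3}\le C\|v_h\|_{L^2(\Omega)}$, whence $\|\pi^0_h v_h\|_{L^2(\Omega)}+|v_h|_{s_3}\le(1+C)\|v_h\|_{L^2(\Omega)}$. For the right inequality I would exploit orthogonality: since $\pi^0_h v_h\in V_h^0$ and $v_h-\pi^0_h v_h\perp V_h^0$, Pythagoras gives $\|v_h\|_{L^2(\Omega)}^2=\|\pi^0_h v_h\|_{L^2(\Omega)}^2+\|v_h-\pi^0_h v_h\|_{L^2(\Omega)}^2$, so it suffices to bound the distance $\|v_h-\pi^0_h v_h\|_{L^2(\Omega)}$ by $C(\|\pi^0_h v_h\|_{L^2(\Omega)}+|v_h|_{s_3})$. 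As this distance is the infimum over $V_h^0$, I would test it with the explicit competitor $\phi_h\in V_h^0$ obtained by setting the boundary nodal values of $v_h$ to zero; then $v_h-\phi_h$ is supported on the single layer of boundary-touching elements and $\|v_h-\pi^0_h v_h\|_{L^2(\Omega)}\le\|v_h-\phi_h\|_{L^2(\Omega)}\le Ch^{1/2}\|v_h\|_{L^2(\partial\Omega)}$. The right inequality thus reduces to the discrete trace estimate $h^{1/2}\|v_h\|_{L^2(\partial\Omega)}\le C(\|\pi^0_h v_h\|_{L^2(\Omega)}+|v_h|_{s_3})$, and proving this is the main obstacle here. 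The mechanism one must capture is that a boundary-concentrated component of $v_h$ invisible to $\pi^0_h$ is forced to oscillate at the mesh scale in order to be $L^2$-orthogonal to $V_h^0$, and such oscillation is precisely detected by the gradient-jump seminorm with the weak $h_F^3$ weighting, whereas a globally smooth boundary profile is charged instead to $\|\pi^0_h v_h\|_{L^2(\Omega)}$ through its large interior footprint. I expect the cleanest route to be a local scaling argument on the boundary layer combined with \eref{eq:s_stab} to match the $h^3$ scaling, splitting the boundary values of $v_h$ into a piecewise-smooth part bounded by $\|\pi^0_h v_h\|_{L^2(\Omega)}$ and a mesh-oscillatory remainder bounded by $|v_h|_{s_3}$.
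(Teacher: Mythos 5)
Both halves of your argument are set up correctly in their easy directions, but in each case you stop exactly at the substantive estimate and leave it unproven, so the proposal has genuine gaps. For \eref{Poincare}, the elementwise integration by parts is a natural instinct, but, as you yourself compute, it produces the boundary flux $\int_{\partial\Omega}v_h\,\nabla v_h\cdot n\,ds$, which is invisible to $s_1$ (interior faces only) and which, after trace and inverse inequalities, contributes a term of size $h\|\nabla v_h\|^2_{L^2(\Omega)}$ that cannot be absorbed into $h^2\|\nabla v_h\|^2_{L^2(\Omega)}$. Neither of your proposed repairs is carried out, and the harmonic-extension variant runs into the same absorption problem, since $\|\nabla\phi\|_{L^2(\Omega)}\lesssim \|v_h\|_{H^1(\Omega)}$ reproduces an unweighted gradient. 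The paper avoids the boundary altogether: it introduces a quasi-interpolant $\mathcal{I}_h$ of the piecewise constant field $\nabla v_h$ satisfying $\|\nabla v_h-\mathcal{I}_h\nabla v_h\|_{L^2(\Omega)}\le C|v_h|_{s_1}$, applies the continuous Poincar\'e inequality first to $v_h$ and then again to $\mathcal{I}_h\nabla v_h$ (with the mean over the interior elements of $\mathcal{M}$ as the nondegenerate functional), and bounds $\|\nabla\mathcal{I}_h\nabla v_h\|_{L^2(\Omega)}\le Ch^{-1}|v_h|_{s_1}$ using that $\nabla v_h$ is elementwise constant. All estimates are volumetric, so no boundary term ever appears; that is the missing idea.

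For \eref{norm_eq}, your lower bound and the Pythagoras splitting agree with the paper, but the reduction of the upper bound to the discrete trace estimate $h^{1/2}\|v_h\|_{L^2(\partial\Omega)}\le C(\|\pi^0_h v_h\|_{L^2(\Omega)}+|v_h|_{s_3})$ replaces one unproven inequality by another of at least the same difficulty, for which you offer only a heuristic. It also locates the difficulty in the wrong place: the paper's key estimate $\|w_h\|_{L^2(\Omega)}\le C|w_h|_{s_3}$ for $w_h=v_h-\pi^0_h v_h$ does not pass through the boundary layer at all. Its mechanism is that $w_h$ is $L^2$-orthogonal to \emph{every} interior nodal basis function; on each macropatch $\Delta_i$ around an interior node, a scaling argument shows that the unweighted gradient-jump seminorm is a norm on the patch-local finite element functions satisfying these orthogonality conditions (the only affine function orthogonal to all the local basis functions is zero), and scaling back to the physical patch produces exactly the $h^3$ weight, giving $\|w_h\|_{L^2(\Delta_i)}\le C|w_h|_{s_3,\Delta_i}$. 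Summing over patches with bounded overlap and then using $|w_h|_{s_3}\le|v_h|_{s_3}+C_{si}\|\pi^0_h v_h\|_{L^2(\Omega)}$ closes the argument. Any rigorous version of your boundary-layer route would in effect have to prove a patchwise statement of this kind anyway, so you should target $\|w_h\|_{L^2(\Omega)}\le C|w_h|_{s_3}$ directly.
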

\begin{proof}
The discrete Poincar\'e type inequality \eref{Poincare} may be proved
using a compactness argument similar to that of \cite{BHL15}. To keep
down the technical detail we here instead use an approach with
continuous Poincar\'e inequalities and discrete interpolation. Let $\mathcal{I}_h:\nabla V_h \mapsto
[V_h]^d$ be a quasi-interpolation operator \cite{BE07} such that
\begin{equation}\label{eq:quasi_prop}
\|\nabla v_h - \mathcal{I}_h \nabla v_h\|_{L^2(\Omega)} \leq C
|v_h|_{s_1}, \quad \| \mathcal{I}_h \nabla v_h\|_{L^2(K)} \leq C \|\nabla
v_h\|_{L^2(\Delta_K)} 
\end{equation}
where $\Delta_K := \cup_{K':K\cap K \ne \emptyset}$.
The following Poincar\'e inequality is well known (see \cite[Lemma B.63]{EG}). If
$f:H^1(\Omega) \mapsto \mathbb{R}$ is a linear functional that is
non-zero for constant functions then
\[
\|u\|_{H^1(\Omega)} \leq C_P ( |f(u)| + \|\nabla u\|_{L^2(\Omega)}),\quad
\forall u \in H^1(\Omega).
\]
For instance, we may take
\[
f(u) = \int_{\mathcal{M}} u ~\mbox{d}\Omega \leq C \|u\|_{L^2(\mathcal{M})}.
\]
As an immediate consequence we have the bound
\begin{equation}\label{eq:lemma2-a}
\|v_h\|_{H^1(\Omega)} \leq C (\|v_h\|_{L^2(\mathcal{M})} + \|\nabla v_h\|_{L^2(\Omega)}).
\end{equation}
Now let $\mathcal{M}_{int} \subset \mathcal{M}$, be the set
$\mathcal{M}_{int}:=\{K \subset \mathcal{M},\, \partial K \cap \partial \mathcal{M} =
\emptyset\}$, that is the set of interior triangles of
$\mathcal{M}$. It then follows by the stability of $\mathcal{I}_h$ that 
$\|\mathcal{I}_h \nabla v_h\|_{L^2(\mathcal{M}_{int})} \leq C \|\nabla
v_h\|_{L^2(\mathcal{M})}$. Adding and subtracting $\mathcal{I}_h \nabla v$ 
in the second term on the right hand side of (\ref{eq:lemma2-a}) gives
\begin{eqnarray}\nonumber
\|v_h\|_{H^1(\Omega)} &\leq C (\|v_h\|_{L^2(\mathcal{M})} 
+ \|\nabla v_h - \mathcal{I}_h \nabla v_h\|_{L^2(\Omega)} 
+ \|\mathcal{I}_h \nabla v_h\|_{L^2(\Omega)} )
\\ \label{eq:lemma2-b}
  &\leq C (\|v_h\|_{L^2(\mathcal{M})} 
+ | v_h |_{s_1}
+ \|\mathcal{I}_h \nabla v_h\|_{L^2(\Omega)} )
\end{eqnarray}
where we used (\ref{eq:quasi_prop}) in the second inequality. For the third term on the right hand side 
of (\ref{eq:lemma2-b}) we once again use Poincar\'e's inequality, the stability 
of $\mathcal{I}_h$ and the inverse inequality \eref{trace} to conclude that
\begin{eqnarray*}
\|\mathcal{I}_h \nabla v_h\|_{L^2(\Omega)} &\leq C(\|\mathcal{I}_h \nabla
v_h\|_{L^2(\mathcal{M}_{int})} 
+ \|\nabla \mathcal{I}_h \nabla v_h\|_{L^2(\Omega)})
\\
&\leq C (h^{-1} \|v_h\|_{L^2(\mathcal{M})} + \|\nabla \mathcal{I}_h \nabla
v_h\|_{L^2(\Omega)})
\end{eqnarray*} 
Using the fact that $\nabla v_h$ is constant on each element, so that
$$\|\nabla \mathcal{I}_h \nabla v_h \|^2_{L^2(\Omega)} = \sum_{K \in
  \mathcal{T}_h} \|\nabla (\nabla v_h - \mathcal{I}_h \nabla v_h)
  \|^2_{L^2(K)} 
  \leq C h^{-2} \|\nabla v_h - \mathcal{I}_h \nabla v_h
  \|^2_{L^2(\Omega)}
\leq C h^{-2} | v_h |_{s_1}^2  
  $$ where in the second inequality we have applied the
inverse inequality \eref{trace} to each term in the sum, and 
finally used \eref{eq:quasi_prop}. We thus obtain 
\begin{equation*}
\|\mathcal{I}_h \nabla v_h\|_{L^2(\Omega)}
\leq C h^{-1} ( \|v_h\|_{L^2(\mathcal{M})} + | v_h |_{s_1} )
\end{equation*}
which combined with (\ref{eq:lemma2-b}) gives 
\begin{eqnarray*}
h \|v_h\|_{H^1(\Omega)} 
&\leq  C (h + 1) (\|v_h\|_{L^2(\mathcal{M})} +  |v_h|_{s_1})
\leq
 C (\|v_h\|_{L^2(\mathcal{M})} +  |v_h|_{s_1}).
\end{eqnarray*}
By which we have proven \eref{Poincare}.

The lower bound of \eref{norm_eq} is immediate by the stability of the $L^2$-projection
and the inverse and trace inequalities of equation \eref{trace}. 
To prove the upper bound we write
\[
\|v_h\|^2_{L^2(\Omega)} = \|\pi^0_h v_h \|^2_{L^2(\Omega)} + \|v_h-\pi^0_h v_h \|^2_{L^2(\Omega)} 
\]
and let $w_h := v_h-\pi^0_h v_h$. We will now prove that
\begin{equation}\label{eq:key}
\|w_h\|_{L^2(\Omega)}^2 \leq |w_h|_{s_3}^2
\end{equation}
from which the upper bound follows, since by the triangle inequality 
followed by the first inequality of \eref{eq:s_stab}, with $i=3$, 
\[
|w_h|_{s_3} \leq |v_h|_{s_3}+ |\pi^0_h v_h|_{s_3} \leq
|v_h|_{s_3}+C_{si}  \|\pi^0_h v_h\|_{L^2(\Omega)}.
\]
To prove \eref{eq:key} we first define the support of a nodal basisfunctions
$\varphi_i$ by $\Omega_i := \{x \in \Omega:  \varphi_i(x) > 0\}$.
Then let $\mathcal{N}_I$ denote the set of indices of basis functions
that are in the interior of the domain, that is, for each value $i \in \mathcal{N}_I$ the closure of the support of the
associated basis function has empty intersection with the boundary,
$\overline \Omega_i \cap \partial \Omega = \emptyset$. For each $i \in
\mathcal{N}_I$ we define the macropatch $\Delta_i := \cup_{j: \Omega_j
  \cap \Omega_i \ne \emptyset} \Omega_j$. This means that $\Delta_i$
consist of $\Omega_i$ and any other patch $\Omega_j$ sharing two
triangles (in 2D) or
several tetrahedra (in 3D) with $\Omega_i$. Since $\mathcal{T}_h$ is shape
regular we may map the patch $\Delta_i$ to a shape regular $\tilde \Delta_i$ such that
$\mbox{diam}(\tilde \Delta_i) = 1$. We define the linear map $
F:\Delta_i \mapsto \tilde \Delta_i$. Let 
$\tilde{\mathcal{F}}$ and $\tilde{\mathcal{N}}$ denote the
set of interior faces and interior nodes respectively of $\tilde
\Delta_i$ and define the scalar product on $\tilde
\Delta_i$
\[
(\tilde v_h,\tilde y_h)_{\tilde \Delta_i} := \int_{\tilde \Delta_i}
\tilde v_h \tilde y_h |F^{-1}| \mbox{d} \tilde x.
\]
It follows that if $\tilde w_h$ denotes the mapped function $w_h$ and
$\tilde \varphi_j$ denotes the mapped basis function $\varphi_j$ then
\begin{equation}\label{eq:ortho_cond}
(\tilde w_h,\tilde \varphi_j)_{\tilde \Delta_i} = 0
\end{equation}
for all $j \in \tilde{\mathcal{N}}$. Now define the semi-norm $|\cdot
|_{s,{\tilde \Delta}}$ as
\[
|\tilde v_h
|_{s,{\tilde \Delta}}^2 := \sum_{F \in {\tilde{\mathcal{F}}}} \int_F
\jump{\tilde \nabla \tilde v_h \cdot n_F}^2 ~\mbox{d}\tilde x.
\]
We now prove that $|\cdot
|_{s, \tilde \Delta}$ is a norm on $\tilde w_h\vert_{\tilde \Delta_i}$. It is clearly a semi norm so
we only need to prove that $|\tilde w_h|_{s,{\tilde \Delta}} = 0$ implies
$\tilde w_h\vert_{\tilde \Delta_i} = 0$. If $|\tilde w_h|_{s,{\tilde \Delta}} = 0$
then $\tilde w_h$ is an affine function on $\tilde \Delta_i$. It is
straightforward to check that the only affine function that can
satisfy \eref{eq:ortho_cond} is the zero function. To be precise, otherwise $\tilde w_h$ has
to be odd with respect to the center of mass of all the basis
functions. Since $w_h$ is affine it will vanish along a line in 2D and
on a plane in 3D. This means that the centers of mass of all the
nodal basis functions associated to the nodes in $\tilde{\mathcal{N}}$
must be on the line (in the plane), which is impossible. The constant
of the estimate
depends on the shape regularity. Defining
\[
| v_h
|_{s_3,\Delta_i}^2 := \sum_{F \in {\mathcal{F}_i}} \int_F
h^3 \jump{ \nabla  v_h \cdot n_F}^2 ~\mbox{d} x,
\]
where $\mathcal{F}_i$ denotes the set of interior faces of $\Delta_i$,
we obtain, by scaling back to the physical geometry, that there
exists $C>0$ depending only on the local mesh geometry such that,
\[
\|w_h\|^2_{L^2(\Delta_i)} \leq C | w_h |_{s_3,\Delta_i}^2
\]
To conclude we observe that since the overlap between different
patches $\Delta_i$ is bounded uniformly in $h$ there holds
\[
\|w_h\|^2_{L^2(\Omega)} \leq \sum_{i \in \mathcal{N}_I}
\|w_h\|^2_{L^2(\Delta_i)} \leq   C \sum_{i \in \mathcal{N}_I}  | w_h
|_{s_3,\Delta_i}^2 \leq C |w_h|_{s_3}^2,
\]
which is the desired inequality.
\end{proof}
\section{Error analysis - data assimilation}

We introduce the triple norm
\begin{equation}\begin{array}{ll}\label{eq:tnorm_da}
\tnorm{(v_h,\mu_h)}:= & \|v_h\|_{L^2(\mathcal{M})} +\|h v_h\|_{H^1(\Omega)} + |v_h|_{s_1}+\|\mu_h\|_{H^1(\Omega)}
\end{array}\end{equation}
where
\[
|x_h|_{s_1} := s_1(x_h,x_h)^{\frac12}.
\]
Using \eref{eq:approx_standard} and \eref{trace} the  following approximation estimate is straightforward to show
\begin{equation}\label{approx_da}
\tnorm{(v-i_h v,0)} \leq C h
\|v\|_{H^2(\Omega)} .
\end{equation}
Observe that the terms in the above norm do not have matching
dimensions. Indeed there is a constant of the dimension of an inverse
length scale present in the first two terms in the right hand
side. In the term over $L^2(\mathcal{M})$ this is to avoid a too
strong penalty on possibly perturbed data and in the second term of
the right hand side it comes from the application of the discrete
Poincar\'e inequality \eref{Poincare}.

Stability in the norm
\eref{eq:tnorm_da} is sufficient to deduce the existence of a discrete
solution to the system \eref{eq:da1}-\eref{eq:da2}, however the norm is too weak
to be useful for error estimates.

The analysis takes the following form, following the framework of
\cite{Bu15}. First we prove inf-sup stability of the form
$A_{DA}[\cdot,\cdot]$ in the norm \eref{eq:tnorm_da}. From this the
existence of discrete solution to the linear system follows. Then we
show an error estimate in the norm \eref{eq:tnorm_da} that is
independent of the stability of the data assimilation problem and
gives convergence rates for the residuals of the
approximation. Finally we show that the error satisfies an equation of
the type \eref{Poisson}, with the right hand side given by the
residual. The a priori error estimates on the residual together with
the assumed a priori estimate on the exact solution allows us to
deduce error bounds through Lemma \ref{3sphere}.
\begin{proposition}\label{infsup_da}
Let $(u_h,\lambda_h) \in \mathcal{V}^{DA}_h$ be the solution of
\eref{eq:da1}-\eref{eq:da2} then there exists $c_s>0$ such that
\[
c_s \tnorm{(u_h,\lambda_h)} \leq \sup_{(v_h,\mu_h)  \in
  \mathcal{V}^{DA}_h} \frac{A_{DA}[(u_h,\lambda_h),(v_h,\mu_h)]}{\tnorm{(v_h,\mu_h)}}.
\]
\end{proposition}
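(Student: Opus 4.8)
The plan is to prove the inf-sup bound constructively: for a given pair $(u_h,\lambda_h)$ I would exhibit an explicit test pair $(v_h,\mu_h) \in \mathcal{V}^{DA}_h$ satisfying $A_{DA}[(u_h,\lambda_h),(v_h,\mu_h)] \gtrsim \tnorm{(u_h,\lambda_h)}^2$ together with $\tnorm{(v_h,\mu_h)} \lesssim \tnorm{(u_h,\lambda_h)}$; dividing then yields the claimed estimate. The test pair is built by superposing two elementary tests. First I would test with $(v_h,\mu_h) = (u_h,-\lambda_h)$. Since $A_{DA}$ is symmetric and the multiplier enters the two copies of $a_h$ with opposite sign under this choice, the indefinite cross term $a_h(u_h,\lambda_h)$ cancels exactly, leaving $A_{DA}[(u_h,\lambda_h),(u_h,-\lambda_h)] = \|u_h\|_{L^2(\mathcal{M})}^2 + |u_h|_{s_1}^2$, which already controls the data-fidelity and stabilization components of the norm.

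Next I would test with $(v_h,\mu_h) = (\lambda_h,0)$, isolating the coercive contribution of the multiplier: $A_{DA}[(u_h,\lambda_h),(\lambda_h,0)] = \|\nabla\lambda_h\|_{L^2(\Omega)}^2 + m_{\mathcal{M}}(u_h,\lambda_h) + s_1(u_h,\lambda_h)$. Forming the combined test pair $(v_h,\mu_h) = (u_h + \delta\lambda_h, -\lambda_h)$ for a small parameter $\delta>0$ and using bilinearity gives the sum of the two expressions above, weighted by $1$ and $\delta$. The two cross terms are then absorbed: since $\lambda_h \in V_h^0 \subset H^1_0(\Omega)$, the continuous Poincar\'e inequality bounds $\|\lambda_h\|_{L^2(\mathcal{M})} \leq \|\lambda_h\|_{L^2(\Omega)} \lesssim \|\nabla\lambda_h\|_{L^2(\Omega)}$, while \eref{eq:s_stab} with $i=1$ gives $|\lambda_h|_{s_1} \lesssim \|\nabla\lambda_h\|_{L^2(\Omega)}$. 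Young's inequality then lets me hide $\delta\, m_{\mathcal{M}}(u_h,\lambda_h)$ and $\delta\, s_1(u_h,\lambda_h)$ into a fraction of $\delta\|\nabla\lambda_h\|_{L^2(\Omega)}^2$ plus multiples of $\|u_h\|_{L^2(\mathcal{M})}^2$ and $|u_h|_{s_1}^2$. Choosing $\delta$ small enough that these last contributions are dominated by the order-one terms coming from the first test, I obtain $A_{DA}[(u_h,\lambda_h),(v_h,\mu_h)] \gtrsim \|u_h\|_{L^2(\mathcal{M})}^2 + |u_h|_{s_1}^2 + \|\nabla\lambda_h\|_{L^2(\Omega)}^2$.

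It then remains to upgrade this lower bound to the full triple norm \eref{eq:tnorm_da}. The missing $\|h u_h\|_{H^1(\Omega)}$ term is recovered from the discrete Poincar\'e inequality \eref{Poincare}, which bounds $h\|u_h\|_{H^1(\Omega)}$ by $\|u_h\|_{L^2(\mathcal{M})} + |u_h|_{s_1}$, and $\|\lambda_h\|_{H^1(\Omega)}$ is recovered from the continuous Poincar\'e inequality applied to $\lambda_h \in H^1_0(\Omega)$; passing from the sum of squares back to the sum of norms in $\tnorm{\cdot}$ is harmless. Finally, the denominator is handled by the triangle inequality together with the same two bounds (those controlling $\|\lambda_h\|_{L^2(\mathcal{M})}$ and $|\lambda_h|_{s_1}$ by $\|\nabla\lambda_h\|_{L^2(\Omega)}$, plus boundedness of $h$), giving $\tnorm{(u_h+\delta\lambda_h,-\lambda_h)} \lesssim \tnorm{(u_h,\lambda_h)}$.

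The main obstacle is the indefiniteness inherent in the saddle-point structure: the term $a_h(u_h,\lambda_h)$ carries no sign, and crucially the form provides no $H^1(\Omega)$-coercivity on $u_h$, which is precisely the ill-posedness of \eref{eq:eq3a}. The sign trick in the first test neutralizes the cross term without ever requiring control of $\|\nabla u_h\|_{L^2(\Omega)}$, and the weak, $h$-weighted control of $u_h$ in $H^1$, rather than full $H^1$-coercivity, is exactly what the discrete Poincar\'e inequality supplies and the reason the triple norm \eref{eq:tnorm_da} is weighted as it is. The only delicate points are the correct ordering of the two Poincar\'e inequalities and the calibration of the smallness of $\delta$ against the constants $C_{s1}$, $C_P$ and $C_p$.
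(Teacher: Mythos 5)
Your proposal is correct and follows essentially the same route as the paper: the same test pair $(u_h+\delta\lambda_h,-\lambda_h)$ (the paper calls your $\delta$ its $\alpha$), the same cancellation of the cross term $a_h(u_h,\lambda_h)$, the same absorption of $\delta\,m_{\mathcal{M}}(u_h,\lambda_h)$ and $\delta\,s_1(u_h,\lambda_h)$ via Poincar\'e, the inverse/stability inequality and Young, the same recovery of $\|h u_h\|_{H^1(\Omega)}$ through the discrete Poincar\'e inequality \eref{Poincare}, and the same boundedness check on the test-function norm. The only cosmetic differences are your motivation of the test pair as a superposition of two elementary tests and your use of \eref{eq:s_stab} where the paper invokes \eref{trace} directly.
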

\begin{proof}
First observe that 
\begin{eqnarray*}
A_{DA}[(u_h,\lambda_h),(u_h + \alpha \lambda_h , -\lambda_h)] &=
\|u_h\|^2_{L^2(\mathcal{M})} + |u_h|_{s_1}^2 + \alpha \|\nabla
\lambda_h\|_{L^2(\Omega)}^2 \\[2mm] & + \alpha m_{\mathcal{M}}(u_h, \lambda_h)+ \alpha s_1(u_h, \lambda_h).
\end{eqnarray*}
Using the Cauchy-Schwarz inequality, an arithmetic-geometric
inequality and the inverse inequality
\eref{trace} in the last term of the right hand side we get
\[
\alpha s_1(u_h, \lambda_h) \leq \frac12  |u_h|_{s_1}^2 + \frac12
\alpha^2 \gamma C_i^2 \|\nabla \lambda_h\|_{L^2(\Omega)}^2.
\]
and similarly 
\[
\alpha m_{\mathcal{M}}(u_h, \lambda_h) \leq \frac12  \|u_h\|_{L^2(\mathcal{M})}^2 + \frac12
\alpha^2 C_p^2 \|\nabla \lambda_h\|_{L^2(\Omega)}^2.
\]
Let $\alpha =  \frac12 \min(C_p^{-2},\gamma^{-1} C_i^{-2})$ to obtain
\[
A_{DA}[(u_h,\lambda_h),(u_h + \alpha \lambda_h , -\lambda_h)] \ge
\frac12 \|u_h\|^2_{L^2(\mathcal{M})} + \frac12 |u_h|_{s_1}^2 + \frac{\alpha}{2} \|\nabla
\lambda\|_{L^2(\Omega)}^2.
\]
The contribution $\|h u_h\|_{H^1(\Omega)}$ is added to the
right hand side by applying \eref{Poincare}.
Using once again the Cauchy-Schwarz inequality, an arithmetic-geometric
inequality and the inverse inequality
\eref{trace} we see that
\[
\tnorm{(u_h + \alpha \lambda_h , -\lambda_h)} \leq C \tnorm{(u_h , \lambda_h)},
\]
which, together with the Poincar\'e inequality for $\lambda_h$,
concludes the proof.
\end{proof}
\begin{proposition}\label{tnorm_error_da}
Let $(u_h,\lambda_h) \in \mathcal{V}^{DA}_h$ be the solution of
\eref{eq:da1}-\eref{eq:da2} and $u \in H^2(\Omega)$ the solution to
\eref{eq:minim}-\eref{Poisson}, with $\alpha=0$. Then
\[
\tnorm{(u-u_h,\lambda_h)} \leq C(\|\delta u_0\|_{L^2(\mathcal{M})} + \|\delta
  f\|_{H^{-1}(\Omega)}+ h |u|_{H^2(\Omega}).
\]
\end{proposition}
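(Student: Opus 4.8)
The plan is to combine the inf-sup stability of Proposition \ref{infsup_da} with a (perturbed) Galerkin orthogonality relation, following the framework of \cite{Bu15}. I would first split the error with the Lagrange interpolant, writing $u - u_h = (u - i_h u) + (i_h u - u_h) =: \eta + e_h$, where $e_h \in V_h$ is a discrete function, legitimate since $u \in H^2(\Omega) \hookrightarrow C^0(\overline\Omega)$ for $d \leq 3$. Because the triple norm is insensitive to the sign of its second argument, it suffices to bound $\tnorm{(e_h, -\lambda_h)}$ and $\tnorm{(\eta, 0)}$ separately; the latter is immediately controlled by the approximation estimate \eref{approx_da}.

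The central ingredient is the consistency of the discrete formulation with respect to the exact pair. Here I would exploit three facts: (i) for the unperturbed problem the exact costate vanishes, $\lambda = 0$, since $u = u_0$ on $\mathcal{M}$ makes the first optimality equation hold with zero multiplier; (ii) as $u \in H^2(\Omega)$ its normal derivative is continuous across interior faces, so $\jump{\nabla u \cdot n_F} = 0$ and hence $s_1(u, v_h) = 0$, i.e. the stabilization is strongly consistent on the exact solution; and (iii) $a_h(u, \mu_h) = m_{\Omega}(f, \mu_h)$ for all $\mu_h \in V_h^0$ because $u$ is a weak solution of \eref{Poisson} and the discrete test functions lie in $H^1_0(\Omega)$. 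Substituting $(u, 0)$ into $A_{DA}[\cdot, \cdot]$ and subtracting the discrete equations \eref{eq:da1}--\eref{eq:da2} then yields, for all $(v_h, \mu_h) \in \mathcal{V}^{DA}_h$,
\[
A_{DA}[(u - u_h, -\lambda_h), (v_h, \mu_h)] = - m_{\mathcal{M}}(\delta u_0, v_h) - \langle \delta f, \mu_h \rangle,
\]
where the pairing $\langle \delta f, \mu_h \rangle$ is the $H^{-1}$--$H^1_0$ duality written as $m_{\Omega}(\delta f, \mu_h)$ in the paper's notation. Thus the only inconsistency entering the residual is the measurement perturbation, not a modelling error from the stabilization.

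Next I would feed this into the inf-sup bound. Observing that the estimate of Proposition \ref{infsup_da} is in fact valid for any discrete pair (its test function is built algebraically from the argument), I apply it to $(e_h, -\lambda_h)$ and expand $A_{DA}[(e_h, -\lambda_h), (v_h, \mu_h)] = A_{DA}[(i_h u - u, 0), (v_h, \mu_h)] + A_{DA}[(u - u_h, -\lambda_h), (v_h, \mu_h)]$. The second summand is the residual above, bounded by $(\|\delta u_0\|_{L^2(\mathcal{M})} + \|\delta f\|_{H^{-1}(\Omega)})\,\tnorm{(v_h, \mu_h)}$ using Cauchy--Schwarz with the duality pairing and the fact that each of $\|v_h\|_{L^2(\mathcal{M})}$ and $\|\mu_h\|_{H^1(\Omega)}$ is dominated by $\tnorm{(v_h, \mu_h)}$. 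The first summand expands as $-m_{\mathcal{M}}(\eta, v_h) - s_1(\eta, v_h) - a_h(\eta, \mu_h)$, each term bounded by Cauchy--Schwarz together with the interpolation estimates in \eref{approx_da} and \eref{eq:approx_standard}, contributing a factor $C h\,|u|_{H^2(\Omega)}\,\tnorm{(v_h, \mu_h)}$. Dividing by $\tnorm{(v_h, \mu_h)}$, taking the supremum, and invoking Proposition \ref{infsup_da} controls $\tnorm{(e_h, -\lambda_h)}$; a final triangle inequality with \eref{approx_da} delivers the claim.

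The step requiring the most care is the consistency relation: one must check that the chosen interpolant, the vanishing of $s_1(u, \cdot)$, and the vanishing of the exact multiplier are all legitimate, so that the residual is driven purely by $\delta u_0$ and $\delta f$. The remaining estimates are routine applications of Cauchy--Schwarz, the trace and inverse inequalities \eref{trace}, and the recorded approximation properties; the measurement of $\delta f$ in $H^{-1}(\Omega)$ is precisely what is needed to pair it against the $\|\mu_h\|_{H^1(\Omega)}$ contribution in the triple norm.
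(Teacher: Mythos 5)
Your proof is correct and follows essentially the same route as the paper: split the error with the Lagrange interpolant, apply the inf-sup bound of Proposition \ref{infsup_da} to the discrete part, derive the perturbed Galerkin orthogonality relation so that the residual consists only of $\delta u_0$, $\delta f$ and interpolation terms, and finish with Cauchy--Schwarz and the approximation estimates. The only (immaterial) differences are that you exploit $s_1(u,\cdot)=0$ to write the residual identity in a cleaner form, whereas the paper keeps the term $s_1(i_h u, v_h)$ explicitly and bounds it via $|u-i_hu|_{s_1}$, and your sign convention on the costate in the error pair, which the triple norm does not see.
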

\begin{proof}
Let $\xi_h := u_h - i_h u$, with $i_h u$ denoting the Lagrange
interpolant of $u$. By the triangle inequality we have
\[
\tnorm{(u-u_h,\lambda_h)} \leq \tnorm{(u-i_h u,0)}+ \tnorm{(\xi_h,z_h)}
\leq C h |u|_{H^2(\Omega} + \tnorm{(\xi_h,\lambda_h)}.
\]
For the second term on the right hand side we apply the inf-sup
condition of Proposition \ref{infsup_da},
\begin{equation}\label{pert_infsup}
c_s \tnorm{(\xi_h,\lambda_h)} \leq \sup_{(v_h,\mu_h)  \in
  \mathcal{V}^{DA}_h} \frac{A_{DA}[(\xi_h,\lambda_h),(v_h,\mu_h)]}{\tnorm{(v_h,\mu_h)}}.
\end{equation}
Observing that $\forall
(v_h,\mu_h) \in   \mathcal{V}^{DA}_h$,
\begin{equation*}
A_{DA}[(u-u_h,\lambda_h),(v_h,\mu_h)] = m_{\Omega}(f-\tilde f, \mu_h ) + m_{\mathcal{M}}(u_0-\tilde u_0,v_h)-s_1(u_h,v_h),
\end{equation*}
we obtain the equality
\[
A_{DA}[(\xi_h,\lambda_h),(v_h,\mu_h)] = -m_{\mathcal{M}}(\delta u_0,
                                        v_h) - m_\Omega(\delta
                                        f,\mu_h) + a(u-i_hu,\mu_h)
                                        +s_1(i_h u,v_h)
\]
Using the Cauchy-Schwarz inequality in the terms of the right hand
side we immediately deduce
\begin{eqnarray*}
A_{DA}[(\xi_h,\lambda_h),(v_h,\mu_h)]&\leq& ( \|\delta u_0\|_{L^2(\mathcal{M})}+ \|\delta
  f\|_{H^{-1}(\Omega)} + \|\nabla
                                       (u-i_hu)\|_{L^2(\Omega)}+\tnorm{(u-i_hu,0)})
  \\
& \times& \tnorm{(v_h,\mu_h)}.
\end{eqnarray*}
Applying this inequality in \eref{pert_infsup} obtain the bound
\[
c_s \tnorm{(\xi_h,\lambda_h)} \leq \|\delta u_0\|_{L^2(\mathcal{M})} + \|\delta
  f\|_{H^{-1}(\Omega)}+\|\nabla (u-i_hu)\|_{L^2(\Omega)}+ \tnorm{(u-i_hu,0)}
\]
and the result follows from approximation estimate \eref{eq:approx_standard} and \eref{approx_da}.
\end{proof}

\begin{theorem}\label{L2_error_da}
Let $(u_h,\lambda_h) \in \mathcal{V}^{DA}_h$ be the solution of
\eref{eq:da1}-\eref{eq:da2} and $u \in H^2(\Omega)$ the solution to
\eref{eq:minim}-\eref{Poisson}, with $\alpha=0$. Then for some
$0<\tau<1$ depending on $r_1/r_3$, $r_2/r_3$ and the smallest
eigenvalue there holds
\begin{eqnarray*}
\|u - u_h\|_{L^2(B_{r_2}(\bfx_0))} & \leq& C_{h,\delta} (\|\delta u_0\|_{L^2(\mathcal{M})} + \|\delta
  f\|_{H^{-1}(\Omega)}+ |u_h|_{s_1})^\tau \\[2mm]
& \leq& C_{h,\delta} (\|\delta u_0\|_{L^2(\mathcal{M})} + \|\delta
  f\|_{H^{-1}(\Omega)} + h\|f\|_{L^2(\Omega)}+h |u|_{H^2(\Omega)})^{\tau}
\end{eqnarray*}
where
\begin{eqnarray*}
C_{h,\delta} &\leq& C
\|u-u_h\|_{L^2(\Omega)}^{(1-\tau)} \\[2mm]
& \leq& C (h^{-1}\|\delta u_0\|_{L^2(\mathcal{M})} +  h^{-1}\|\delta f\|_{H^{-1}(\Omega)}+  |u|_{H^2(\Omega)})^{(1-\tau)}.
\end{eqnarray*}
\end{theorem}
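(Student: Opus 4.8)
The plan is to apply the three spheres inequality of Lemma~\ref{3sphere} to the error $e := u - u_h$. To do so I must first exhibit $e$ as a weak solution of a Poisson problem $-\Delta e = g$ in $\Omega$ with a residual $g \in H^{-1}(\Omega)$ whose norm is controlled by the measurement errors and the stabilization seminorm, and then feed the resulting quantity $\varepsilon := \|g\|_{H^{-1}(\Omega)}$, together with the near-ball norm $\|e\|_{L^2(B_{r_1})} = \|e\|_{L^2(\mathcal{M})}$ and the global norm $\|e\|_{L^2(\Omega)}$ (which dominates $\|e\|_{L^2(B_{r_3})}$), into \eref{3sphere_stab}. The near-ball and global $L^2$ quantities are supplied by Proposition~\ref{tnorm_error_da} and its global consequence, so the genuinely new work is the residual bound.

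First I would establish the residual identity. For arbitrary $\mu \in H^1_0(\Omega)$ set $\langle g, \mu\rangle := \int_\Omega \nabla e \cdot \nabla \mu \, d\Omega$, so that $-\Delta e = g$ weakly. Splitting $\mu = \pi^0_h \mu + (\mu - \pi^0_h \mu)$ with $\pi^0_h \mu \in V^0_h$, the discrete part is handled by Galerkin orthogonality: subtracting \eref{eq:da2} from the weak form $\int_\Omega \nabla u \cdot \nabla \pi^0_h\mu \, d\Omega = \int_\Omega f\, \pi^0_h\mu \, d\Omega$ gives $\int_\Omega \nabla e \cdot \nabla \pi^0_h \mu\, d\Omega = -\langle \delta f, \pi^0_h\mu\rangle$, bounded by $C \|\delta f\|_{H^{-1}(\Omega)}\|\mu\|_{H^1(\Omega)}$ using $H^1$-stability of $\pi^0_h$. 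For the remaining continuous part I would write $\int_\Omega \nabla e \cdot \nabla(\mu - \pi^0_h\mu)\, d\Omega$ as the difference of a smooth contribution and a discrete one. The smooth one equals $\int_\Omega f (\mu - \pi^0_h\mu)\, d\Omega$ by the weak form of $u$ and is bounded by $C h \|f\|_{L^2(\Omega)} \|\mu\|_{H^1(\Omega)}$ through the $L^2$ approximation estimate for $\pi^0_h$. The discrete one, $\int_\Omega \nabla u_h \cdot \nabla(\mu - \pi^0_h\mu)\, d\Omega$, I would integrate by parts element by element; since $u_h$ is piecewise affine, $\Delta u_h = 0$ on each $K$, and because $\mu - \pi^0_h\mu$ vanishes on $\partial\Omega$ only interior faces survive, leaving $\sum_{F \in \mathcal{F}_I} \int_F \jump{\nabla u_h \cdot n_F}(\mu - \pi^0_h\mu)\, ds$. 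A face-wise Cauchy--Schwarz matched to the weight $h_F$ in \eref{sform}, followed by the trace inequality \eref{trace} and the approximation property of $\pi^0_h$, then bounds this by $C |u_h|_{s_1} \|\mu\|_{H^1(\Omega)}$. Collecting the three contributions yields $\|g\|_{H^{-1}(\Omega)} \leq C(\|\delta f\|_{H^{-1}(\Omega)} + h\|f\|_{L^2(\Omega)} + |u_h|_{s_1})$.

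With the residual in hand I would apply Lemma~\ref{3sphere} to $e$ with $\varepsilon = \|g\|_{H^{-1}(\Omega)}$ and the identification $\mathcal{M} = B_{r_1}$, giving
\[
\|u - u_h\|_{L^2(B_{r_2})} \leq C\left(\|e\|_{L^2(\mathcal{M})} + \varepsilon\right)^{\tau}\left(\|e\|_{L^2(\Omega)} + \varepsilon\right)^{1-\tau}.
\]
The near factor $\|e\|_{L^2(\mathcal{M})}$ is controlled by $\tnorm{(u-u_h,\lambda_h)}$, hence by Proposition~\ref{tnorm_error_da}; combined with the residual bound this collapses the first factor to $C(\|\delta u_0\|_{L^2(\mathcal{M})} + \|\delta f\|_{H^{-1}(\Omega)} + |u_h|_{s_1})^\tau$, the first stated inequality. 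Substituting the a priori bound $|u_h|_{s_1} \leq \tnorm{(u-u_h,\lambda_h)} \leq C(\|\delta u_0\|_{L^2(\mathcal{M})} + \|\delta f\|_{H^{-1}(\Omega)} + h|u|_{H^2(\Omega)})$ and keeping the residual term $h\|f\|_{L^2(\Omega)}$ produces the second inequality. Finally I would identify $C_{h,\delta}$ with the far factor $C(\|e\|_{L^2(\Omega)} + \varepsilon)^{1-\tau}$: since $\|h v_h\|_{H^1(\Omega)}$ appears in $\tnorm{\cdot}$ and $\|h v_h\|_{H^1(\Omega)} \geq h\|v_h\|_{L^2(\Omega)}$, Proposition~\ref{tnorm_error_da} yields $\|u-u_h\|_{L^2(\Omega)} \leq C(h^{-1}\|\delta u_0\|_{L^2(\mathcal{M})} + h^{-1}\|\delta f\|_{H^{-1}(\Omega)} + |u|_{H^2(\Omega)})$, which is the claimed bound for $C_{h,\delta}$ once the subdominant $\varepsilon$ is absorbed into the same a priori quantities.

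The main obstacle is the residual estimate of the second paragraph, and within it the discrete term $\int_\Omega \nabla u_h \cdot \nabla(\mu - \pi^0_h\mu)\, d\Omega$: the element-wise integration by parts, the reassembly of inter-element boundary terms into the interior-face jumps of $s_1(\cdot,\cdot)$, and the scaled trace inequality whose weight must match the $h_F$ factor in \eref{sform} so that the jumps are controlled precisely by $|u_h|_{s_1}$. This is exactly the step where the weakly consistent stabilization pays off, and care is needed both with the mesh-dependent weights and with the choice of $\pi^0_h$, so that Galerkin orthogonality against $V^0_h$ and the $L^2/H^1$ approximation and stability estimates hold simultaneously. The remaining bookkeeping — checking that the lower-order terms can be absorbed so the two clean forms in the statement emerge — is routine once the residual bound is in place.
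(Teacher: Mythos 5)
Your proposal is correct and follows essentially the same route as the paper: identify $e=u-u_h$ as a weak solution of \eref{Poisson} with residual right-hand side, bound the residual in $H^{-1}(\Omega)$ by splitting the test function into a discrete part (Galerkin orthogonality, yielding $\|\delta f\|_{H^{-1}(\Omega)}$) and an approximation-error part (elementwise integration by parts, yielding $h\|f\|_{L^2(\Omega)}+|u_h|_{s_1}$), and then feed the near and far factors from Proposition \ref{tnorm_error_da} into Lemma \ref{3sphere}. The only (immaterial) difference is that you subtract the $L^2$-projection $\pi^0_h\mu$ where the paper uses the Lagrange interpolant $i_h w$.
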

\begin{proof}
Let $e = u-u_h$. Then we have
\[
a(e,w) = (f,w) - a(u_h,w)=:\left< r,w \right>_{H^{-1},H^1}, \mbox{ with } r \in H^{-1}(\Omega).
\]
It follows that $e$ is a weak solution to the problem \eref{Poisson}
with the right hand side $r \in H^{-1}(\Omega)$ and we are in the
framework of Lemma \ref{3sphere}. We now only need to show the bound
\[
\|r\|_{H^{-1}(\Omega)} \leq \varepsilon
\]
and then apply the inequality \eref{3sphere_stab} with $e$ in the
place of $u$. By definition of the dual norm we have 
\[
\|r\|_{H^{-1}(\Omega)} = \sup_{w \in H^1_0(\Omega) \setminus 0 }\frac{\left<r,w\right>_{H^{-1},H^1}}{\|w\|_{H^1(\Omega)}}.
\]
We proceed using the definition of $r$, 
\begin{eqnarray*}
\left< r,w \right>_{H^{-1},H^1} & =  (f,w - i_h w) - a(u_h,w - i_h w) -
(\delta f, i_h w) \\[2mm]
& \leq C_r (h \|f\|_{L^2(\Omega)} + |u_h|_{s_1} +
\|\delta f\|_{H^{-1}(\Omega)}) \|w\|_{H^1(\Omega)}.
\end{eqnarray*}
Here we used partial integration in the form $a(\cdot,\cdot)$, a trace
inequality and approximation to
obtain
\[
|a(u_h,w - i_h w)| \leq \sum_{F \in \mathcal{F}_i} \int_{F}
|\jump{\nabla u_h \cdot n_F} |w - i_h w| ~\mbox{d}s \leq   C |u_h|_{s_1}\|w\|_{H^1(\Omega)}.
\]
We now apply Lemma \ref{3sphere} to obtain
\[
\|e\|_{B_{r_2}(x_0)} \leq C \left(\|e\|_{B_{r_1}(x_0)}+ \varepsilon\right)^\tau \cdot \left(\|e\|_{B_{r_3}(x_0)}+ \varepsilon\right)^{(1-\tau)}
\]
Setting $\varepsilon = C_r (h \|f\|_{L^2(\Omega)} + |u_h|_{s_1} +
\|\delta f\|_{H^{-1}(\Omega)})$ and observing that by Proposition \ref{tnorm_error_da}
\[
\|e\|_{B_{r_3}(x_0)}  \leq C h^{-1} (\|\delta u_0\|_{L^2(\mathcal{M})} + \|\delta
  f\|_{H^{-1}(\Omega)}+ h \|u\|_{H^2(\Omega})
\]
and once again by Proposition \ref{tnorm_error_da}
\[
\|e\|_{B_{r_1}(x_0)} = \|u - u_h\|_{L^2(\mathcal{M})} \leq C (\|\delta u_0\|_{L^2(\mathcal{M})} + \|\delta
  f\|_{H^{-1}(\Omega)}+ h \|u\|_{H^2(\Omega})
\]
we conclude.
\end{proof}
\begin{corollary}
Assume that for $h_0>0$ there holds
\begin{equation}\label{pert_bound}
\|\delta u_0\|_{L^2(\mathcal{M})} + \|\delta
  f\|_{H^{-1}(\Omega)} \leq h_0 \|u\|_{H^2(\Omega}
\end{equation}
then for $h > h_0$, there exists $C_0$ such that
\[
\|u - u_h\|_{L^2(B_{r_2}(\bfx_0))} \leq C_0 (h \|u\|_{H^2(\Omega)})^{\tau}
\]
with $C$ independent of $h$.
\end{corollary}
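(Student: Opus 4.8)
The plan is to obtain the corollary as a direct consequence of Theorem \ref{L2_error_da}, by inserting the hypothesis \eref{pert_bound} into the two chained estimates proved there. No further analysis of the discrete problem is needed: the entire argument is a matter of collecting constants and checking that the resulting bound scales like $h^\tau$.

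First I would control the quantity appearing inside the $\tau$-th power. Theorem \ref{L2_error_da} gives
\[
\|u - u_h\|_{L^2(B_{r_2}(\bfx_0))} \leq C_{h,\delta}\,\bigl(\|\delta u_0\|_{L^2(\mathcal{M})} + \|\delta f\|_{H^{-1}(\Omega)} + h\|f\|_{L^2(\Omega)} + h|u|_{H^2(\Omega)}\bigr)^{\tau}.
\]
Using \eref{pert_bound} together with $h > h_0$ one has $\|\delta u_0\|_{L^2(\mathcal{M})} + \|\delta f\|_{H^{-1}(\Omega)} \leq h_0\|u\|_{H^2(\Omega)} \leq h\|u\|_{H^2(\Omega)}$. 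Since $u$ solves \eref{Poisson} we have $f = -\Delta u$, so that $\|f\|_{L^2(\Omega)} = \|\Delta u\|_{L^2(\Omega)} \leq C\|u\|_{H^2(\Omega)}$, and trivially $|u|_{H^2(\Omega)} \leq \|u\|_{H^2(\Omega)}$. Hence the bracket is bounded by $C\,h\|u\|_{H^2(\Omega)}$.

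Next I would bound the prefactor. The theorem also records $C_{h,\delta} \leq C\bigl(h^{-1}\|\delta u_0\|_{L^2(\mathcal{M})} + h^{-1}\|\delta f\|_{H^{-1}(\Omega)} + |u|_{H^2(\Omega)}\bigr)^{1-\tau}$. Applying \eref{pert_bound} once more, $h^{-1}\bigl(\|\delta u_0\|_{L^2(\mathcal{M})} + \|\delta f\|_{H^{-1}(\Omega)}\bigr) \leq (h_0/h)\|u\|_{H^2(\Omega)} \leq \|u\|_{H^2(\Omega)}$, because $h_0/h < 1$, so that $C_{h,\delta} \leq C\|u\|_{H^2(\Omega)}^{1-\tau}$.

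Combining the two bounds yields
\[
\|u - u_h\|_{L^2(B_{r_2}(\bfx_0))} \leq C\,\|u\|_{H^2(\Omega)}^{1-\tau}\,\bigl(h\|u\|_{H^2(\Omega)}\bigr)^{\tau},
\]
which is the claimed estimate with $C_0 := C\|u\|_{H^2(\Omega)}^{1-\tau}$; the constant $C$ is inherited from Theorem \ref{L2_error_da} and is independent of $h$. The only point requiring care—rather than a genuine obstacle—is the bookkeeping of the two exponents: the factor $\|u\|_{H^2(\Omega)}^{1-\tau}$ coming from the prefactor must be recognised as combining with the $\tau$-th power of the bracket so that the $h$-dependence remains exactly $h^\tau$, while the norm-dependent part is absorbed into $C_0$.
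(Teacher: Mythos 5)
Your proposal is correct and is exactly the argument the paper intends: the paper's proof simply states that the result "follows immediately by applying the assumed bound \eref{pert_bound} in the error estimate of Theorem \ref{L2_error_da}", and your write-up fills in precisely that substitution, including the correct handling of the prefactor $C_{h,\delta}$ and the exponent bookkeeping.
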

\begin{proof}
This result follows immediately by applying the assumed bound
\eref{pert_bound} in the error estimate of Theorem \ref{L2_error_da}.
\end{proof}
Observe that in particular the above result implies that if the exact
data is available in $\mathcal{M}$ we can compute the solution in
$B_{r_2}({\bf{x}}_0)$ to arbitrary precision. Also observe that
Theorem \ref{L2_error_da} provides both a priori and a posteriori
error bounds. This means that perturbations in data can be compared with the
computational residual in an a posteriori procedure to drive adaptive
algorithms for the computation of the reconstruction.
\section{Error analysis - source reconstruction}
The error analysis in this case follows a similar outline, however
instead of Lemma \ref{3sphere} we may here use a compactness argument
to obtain convergence orders.

We introduce the triple norm
\begin{equation}\label{eq:tnorm}
\tnorm{(v_h,w_h,\mu_h)}:= \|v_h\|_{L^2(\Omega)}+ \|h w_h\|_{L^2(\Omega)} +\|h^{-1}
\mu_h\|_{L^2(\Omega)}
+|v_h|_{s_1}+|w_h|_{s_5},
\end{equation}
where we recall that
\[
|x_h|_{s_i} := s_i(x_h,x_h)^{\frac12}.
\]
Using \eref{eq:approx_standard} and \eref{trace} the  following approximation estimate is straightforward to show
\begin{equation}\label{approx}
\tnorm{(v-i_h v, w - \pi_h w,0)} \leq C h
(\|v\|_{H^2(\Omega)} + \|w\|_{L^2(\Omega)}).
\end{equation}
We will also use the Ritz-projection defined by $r_h u \in V_h^0$ such
that
\[
a_h(r_hu,v_h) = a_h(u,v_h),\quad \forall v_h \in V_h^0.
\]
It is well known that if $\Omega$ is convex the following estimate
holds
\[
\|u - r_hu\|_{L^2(\Omega)} + h \|\nabla (u - r_hu) \|_{L^2(\Omega)}
\leq C h^2 |u|_{H^2(\Omega)}.
\]
We will first prove an estimate where we assume that $q$ is more
regular
and show that in this case the stabilization of the velocity is
superfluous.
\begin{proposition}
Let $(u,q) \in W\times H^1(\Omega)$ satisfy \eref{eq:constraint} and
let $(u_h,q_h\lambda_h) \in \mathcal{V}_h^{SR}$ be the solution
of \eref{eq:crazy1}-\eref{eq:crazy2}, with $\gamma_1=0$ and
$\gamma_5\ge 0$. Then there holds
\begin{equation}\label{bound1}
\|u - u_h\|+h \|\nabla (u-u_h)\|_{L^2(\Omega)} + |q - q_h|_{s_5}\leq
C (h^2 \gamma_5^{\frac12} \| q\|_{H^1(\Omega)} +  \|u - u_0\|_{L^2(\Omega)})
\end{equation}
For $\gamma_5 \ge 0$
\begin{equation}\label{bound2}
\|\pi_h^0(q-q_h)\|_{H^{-1}(\Omega)} \leq
C (h  \| q\|_{H^1(\Omega)} + h^{-1} \|u - u_0\|_{L^2(\Omega)})
\end{equation}
and for $\gamma_5>0$
\begin{equation}\label{bound3}
\|q-q_h\|_{H^{-1}(\Omega)} \leq
C (h  \| q\|_{H^1(\Omega)} + h^{-1} \|u - u_0\|_{L^2(\Omega)})
\end{equation}
\end{proposition}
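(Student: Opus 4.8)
The plan is to obtain \eref{bound1} from the fact that, with $\gamma_1=0$, the system \eref{eq:crazy1}--\eref{eq:crazy3} is exactly the optimality system for minimizing the convex functional $E(v_h,w_h):=\tfrac12\|v_h-u_0\|_{L^2(\Omega)}^2+\tfrac12|w_h|_{s_5}^2$ over the pairs $(v_h,w_h)\in V_h^0\times V_h$ constrained by the discrete state equation $a_h(v_h,\mu_h)=m(w_h,\mu_h)$ for all $\mu_h\in V_h^0$, with $\lambda_h$ the Lagrange multiplier. Since $E$ is a positive semidefinite quadratic and the constraint is affine, any solution of the KKT system minimizes $E$ on the feasible manifold, so it suffices to exhibit one convenient feasible competitor and evaluate $E$ there. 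I would take the competitor $(r_hu,\pi_hq)$: it is feasible because $a_h(r_hu,\mu_h)=a_h(u,\mu_h)=m(q,\mu_h)=m(\pi_hq,\mu_h)$ for $\mu_h\in V_h^0$, using the definition of the Ritz projection, the weak form of \eref{eq:constraint}, and $V_h^0\subset V_h$.

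Minimality then gives $\|u_h-u_0\|_{L^2(\Omega)}^2+|q_h|_{s_5}^2\le\|r_hu-u_0\|_{L^2(\Omega)}^2+|\pi_hq|_{s_5}^2$, whose right-hand side I estimate by the Ritz bound $\|r_hu-u\|_{L^2(\Omega)}\le Ch^2|u|_{H^2(\Omega)}$ with the shift theorem $\|u\|_{H^2(\Omega)}\le C\|q\|_{L^2(\Omega)}$, and by $|\pi_hq|_{s_5}\le Ch^2\gamma_5^{1/2}\|q\|_{H^1(\Omega)}$ coming from the second inequality of \eref{eq:s_stab} with $i=5$ and the $H^1$-stability of $\pi_h$. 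A triangle inequality through $u_0$ and through $\pi_hq$ then delivers the stated control of $\|u-u_h\|$ and $|q-q_h|_{s_5}$. The gradient term $h\|\nabla(u-u_h)\|_{L^2(\Omega)}$ follows by splitting through $r_hu$: the consistent part obeys $\|\nabla(u-r_hu)\|_{L^2(\Omega)}\le Ch|u|_{H^2(\Omega)}$, while the inverse inequality \eref{trace} turns $h\|\nabla(r_hu-u_h)\|_{L^2(\Omega)}$ into $C\|r_hu-u_h\|_{L^2(\Omega)}$, which is already controlled.

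For \eref{bound2} I would use the error form of \eref{eq:crazy3}, namely $a_h(u-u_h,\mu_h)=m(q-q_h,\mu_h)$ for all $\mu_h\in V_h^0$, tested against $\mu_h=\pi^0_h\phi$ for $\phi\in H^1_0(\Omega)$. Because $m(\pi^0_h(q-q_h),\phi)=m(q-q_h,\pi^0_h\phi)=a_h(u-u_h,\pi^0_h\phi)$, Cauchy--Schwarz together with the $H^1$-stability of $\pi^0_h$ gives $\|\pi^0_h(q-q_h)\|_{H^{-1}(\Omega)}\le C\|\nabla(u-u_h)\|_{L^2(\Omega)}$, and dividing the gradient part of \eref{bound1} by $h$ closes the estimate. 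No stabilization of $q_h$ is used here, which is why this holds for $\gamma_5\ge0$.

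Bound \eref{bound3} is the delicate one, and it is where $\gamma_5>0$ must enter. Writing $\|q-q_h\|_{H^{-1}(\Omega)}=\sup_{\|\phi\|_{H^1}=1}m(q-q_h,\phi)$, I split $\phi=\pi^0_h\phi+(\phi-\pi_h\phi)+(\pi_h\phi-\pi^0_h\phi)$. The $\pi^0_h\phi$ piece reduces to the term already handled in \eref{bound2}; the piece $\phi-\pi_h\phi$ is harmless since it is $L^2$-orthogonal to $V_h\ni q_h$, leaving only $m(q-\pi_hq,\phi-\pi_h\phi)\le Ch^2\|q\|_{H^1(\Omega)}\|\phi\|_{H^1(\Omega)}$. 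The genuine difficulty is the near-boundary piece $m(q-q_h,\pi_h\phi-\pi^0_h\phi)$, where $\pi_h\phi-\pi^0_h\phi=(I-\pi^0_h)\pi_h\phi$ is $O(h)$ in $L^2$ (as $\phi$ vanishes on $\partial\Omega$) but is paired, after projecting $q\mapsto\pi_hq$ and discarding the orthogonal $\pi^0_h$-component, with the a priori uncontrolled boundary part $(I-\pi^0_h)(\pi_hq-q_h)$ of the source. Here I invoke the key step \eref{eq:key} of Lemma \ref{lem:stab_bound} to bound $\|(I-\pi^0_h)(\pi_hq-q_h)\|_{L^2(\Omega)}$ by the corresponding $s_3$-seminorm. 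The main obstacle is precisely that the only control available is the physical stabilization $|q-q_h|_{s_5}$, scaled by $h^5$, and converting it to the $h^3$-scaled $s_3$ control required by the norm equivalence \eref{norm_eq} costs a factor $h^{-1}\gamma_5^{-1/2}$. This blow-up as $\gamma_5\to0$ is exactly why the full estimate \eref{bound3} needs $\gamma_5>0$, whereas the projected estimate \eref{bound2}, which never sees the boundary layer of $q_h$, survives at $\gamma_5=0$.
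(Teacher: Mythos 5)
Your argument for \eref{bound1} is correct but takes a genuinely different route from the paper. You exploit the fact that, for $\gamma_1=0$, the discrete system is the KKT system of a convex quadratic functional minimized over an affine constraint set, and you compare the minimizer with the feasible competitor $(r_h u,\pi_h q)$; the paper instead tests the error equation with $(\xi_h,\eta_h,-\lambda_h)$, where $\xi_h=u_h-r_hu$ and $\eta_h=q_h-\pi_hq$, and uses Galerkin orthogonality plus Cauchy--Schwarz. The two are computationally close (your optimality inequality is essentially the integrated form of the paper's energy identity), but your version buys a cleaner conceptual statement --- the discrete solution is the best feasible point for the discrete energy --- and both conclude with the same Ritz-projection and $H^1$-stability-of-$\pi_h$ estimates. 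For \eref{bound2}, your duality argument with test function $\pi^0_h\phi$ and the error form of \eref{eq:crazy3} is the paper's argument, modulo your (correct) restriction of the supremum to $H^1_0(\Omega)$, which even spares you the extra term $h\|q-\pi_hq\|_{L^2(\Omega)}$ the paper carries.

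For \eref{bound3}, your decomposition of the test function $\phi$ mirrors, on the dual side, the paper's decomposition of $q-q_h$ through $\pi_hq$ and $\pi^0_h(\pi_hq-q_h)$; both isolate the boundary-layer fluctuation $(I-\pi^0_h)(\pi_hq-q_h)$ and control it through \eref{eq:key}. One sub-step is glossed over, however: \eref{eq:key} applies directly to $w_h=(I-\pi^0_h)\eta_h$, but passing from $|(I-\pi^0_h)\eta_h|_{s_3}$ to the available quantity $|\eta_h|_{s_5}$ requires a triangle inequality that leaves the extra term $|\pi^0_h\eta_h|_{s_3}\le C\|\pi^0_h\eta_h\|_{L^2(\Omega)}$ from \eref{eq:s_stab}, and this term is not covered by anything you have established. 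The paper removes it by testing the error equation with $\pi^0_h\eta_h$ and applying an inverse inequality, yielding $h\|\pi^0_h\eta_h\|_{L^2(\Omega)}\le C\|\nabla(u-u_h)\|_{L^2(\Omega)}$, which is then absorbed via \eref{bound1}. With that one addition (and the $\gamma_5^{-1/2}$ bookkeeping you already note, which is indeed why $\gamma_5>0$ is required here), your proof closes.
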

\begin{proof}
Let $\xi_h = u_h - r_h u$ and $\eta_h = q_h - \pi_h q$. It follows by
the definition of $A_h[(\cdot,\cdot,\cdot),(\cdot,\cdot,\cdot)]$ that
\[
\|\xi_h\|_{L^2(\Omega)}^2+ |\eta_h|_{s_5}^2 =  A_h[(\xi_h,\eta_h,\lambda_h),(\xi_h,\eta_h,-\lambda_h)].
\]
Using that
\[
A_h[(u - u_h,q-q_h,\lambda_h),(\xi_h,\eta_h,-\lambda_h)] = m(u - u_0,\xi_h) 
\]
we have
\[
\|\xi_h\|_{L^2(\Omega)}^2+ |\eta_h|_{s_5}^2 = (u - r_h u, \xi_h) +
s_5(\pi_h q, \eta_h)- m(u - u_0,\xi_h).
\]
Using a Cauchy-Schwarz inequality we then obtain
\[
\|\xi_h\|_{L^2(\Omega)}+ |\eta_h|_{s_5} \leq \|u - r_h u\|_{L^2(\Omega)} +
|\pi_h q|_{s_5}+ \|u - u_0\|_{L^2(\Omega)}.
\]
Using the approximation properties of the Ritz projection and the
$H^1$-stability of the $L^2$-projection we get the estimate
\[
\|\xi_h\|_{L^2(\Omega)}+ |\eta_h|_{s_5} \leq C h^2(|u|_{H^2(\Omega)} +
\|\nabla q\|_{L^2(\Omega)}) + \|u - u_0\|_{L^2(\Omega)}.
\]
The estimate on the gradient of the error is then a consequence of an
inverse inequality
\[
\|\nabla \xi_h\|_{L^2(\Omega)} \leq C_i h^{-1} \|\xi_h\|_{L^2(\Omega)}
\leq C h (|u|_{H^2(\Omega)} +
\|\nabla q\|_{L^2(\Omega)}) + h^{-1} \|u - u_0\|_{L^2(\Omega)}.
\]
For the estimate on the source term observe that for $\gamma_5 \ge 0$
\begin{eqnarray}\label{pi0bound}
\|\pi_h^0(q-q_h)\|_{H^{-1}(\Omega)} & = &\sup_{w\in H^1(\Omega): \|w\|_{H^1} = 1}
(\pi_h^0(q - q_h), w - \pi^0_h w) \\[2mm] \nonumber
& & + a(u - u_h,\pi_h^0 w) \\[2mm]
&\leq & C (h \|q - \pi_h
q\|_{L^2(\Omega)} + \|\nabla (u - u_h)\|_{L^2(\Omega)}). \nonumber
\end{eqnarray}
If on the other hand $\gamma_5>0$ then we may use 
\[
\|q - q_h\|_{H^{-1}(\Omega)} \leq \|q - \pi_h q\|_{H^{-1}(\Omega)} +
\|\pi_h q - q_h\|_{H^{-1}(\Omega)},
\]
where it is immediate to show that $ \|q - \pi_h q\|_{H^{-1}(\Omega)}
\leq C h^2 \|\nabla q\|_{L^2(\Omega)}$ and 
\[
\|\pi_h q - q_h\|_{H^{-1}(\Omega)} \leq \|(\pi_h q - q_h)-\pi_h^0(\pi_h q - q_h)\|_{H^{-1}(\Omega)} +\|\pi_h^0(q - q_h)\|_{H^{-1}(\Omega)} .
\]
For the second term on the right hand side the estimate
\eref{pi0bound} holds and for the first term we observe that with
$\eta_h-\pi_h^0 \eta_h
= (\pi_h q - q_h)-\pi_h^0(\pi_h q - q_h)$ we have
\begin{eqnarray*}
\|\eta_h-\pi_h^0 \eta_h \|_{H^{-1}(\Omega)} &= \sup_{w\in H^1(\Omega): \|w\|_{H^1} = 1}
(\eta_h-\pi_h^0 \eta_h, w - \pi^0_h w) \\ [2mm]&\leq h \|\eta_h-\pi_h^0
\eta_h\|_{L^2(\Omega)} \leq C (|\eta_h|_{s_5} + h \|\pi_h^0 \eta_h\|_{L^2(\Omega)}. 
\end{eqnarray*}
We may then use the equation to deduce
\[
h^2 \|\pi_h^0 \eta_h\|^2_{L^2(\Omega)} = h^2 (q- q_h,\pi_h^0 \eta_h)=
h^2a(u-u_h,\pi_h^0 \eta_h)
\]
and after a Cauchy-Schwarz inequality and an inverse inequality in the
second factor,
\begin{eqnarray*}
h^2a(u-u_h,\pi_h^0 \eta_h)\leq C h \|\nabla(u-u_h)\|_{L^2(\Omega)} \|\pi_h^0 \eta_h\|_{L^2(\Omega)}.
\end{eqnarray*}
It follows that $h\|\pi_h^0 \eta_h\|_{L^2(\Omega)} \leq C h
\|\nabla(u-u_h)\|_{L^2(\Omega)}$ and the above bounds that
\eref{bound3} holds.
\end{proof}
We see that if less regularity is assumed for the source term $q \in
L^2(\Omega)$ then convergence of the gradient can no longer be
deduced, however a priori bounds on $u_h$ and $q_h$ are nevertheless
achieved that may be used to prove convergence in the asymptotic
limit. In order to obtain an estimate with convergence order in $h$
also in the case where $q\in L^2(\Omega)$ we take $\gamma_1>0$
and prove that this allows us to obtain stronger control of the
approximation of the source term. This in its turn allows us to prove
convergence
using an interpolation argument between $H^{-2}$ and $L^2$ as we shall
see below.

\begin{proposition}\label{infsup}(inf-sup stability)
For all $(y_h,t_h,\varsigma_h) \in \mathcal{V}_h^{SR}$ there
exists $c_s>0$ such that
\[
c_s \tnorm{(y_h,t_h,\varsigma_h)} \leq \sup_{(v_h,w_h,\mu_h) \in
  \mathcal{V}_h^{SR}} \frac{A_{SR}[(y_h,t_h,\varsigma_h),
  (v_h,w_h,\mu_h)]}{\tnorm{(v_h,w_h,\mu_h)}}.
\] 
\end{proposition}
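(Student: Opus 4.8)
The plan is to prove the inf-sup bound by the usual device: for each fixed $(y_h,t_h,\varsigma_h)\in\mathcal{V}_h^{SR}$ I will exhibit one explicit test function $(v_h,w_h,\mu_h)\in\mathcal{V}_h^{SR}$ satisfying two estimates, a lower bound $A_{SR}[(y_h,t_h,\varsigma_h),(v_h,w_h,\mu_h)]\ge c\,\tnorm{(y_h,t_h,\varsigma_h)}^2$ and a continuity bound $\tnorm{(v_h,w_h,\mu_h)}\le C\,\tnorm{(y_h,t_h,\varsigma_h)}$; dividing the first by the second and bounding the supremum from below by this single choice then yields the claim with $c_s=c/C$. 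Since the tesselation is quasi uniform I treat $h$ as a single (globally comparable) mesh parameter, so that factors of $h$ may be pulled out of integrals up to fixed constants.

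First I record the diagonal contribution. A direct computation, using the symmetry of $m(\cdot,\cdot)$ and the cancellation of the two $a_h(y_h,\varsigma_h)$ terms and of $m(t_h,\varsigma_h)$ against $m(\varsigma_h,t_h)$, gives
\[
A_{SR}[(y_h,t_h,\varsigma_h),(y_h,t_h,\varsigma_h)] = \|y_h\|_{L^2(\Omega)}^2 + |y_h|_{s_1}^2 + |t_h|_{s_5}^2.
\]
This already controls three of the five contributions to $\tnorm{\cdot}$. The two missing ones are $\|h^{-1}\varsigma_h\|_{L^2(\Omega)}$ and $\|h t_h\|_{L^2(\Omega)}$; multiplying \eref{norm_eq} by $h$ and using that $h|t_h|_{s_3}$ and $|t_h|_{s_5}$ are equivalent, the second reduces to controlling $\|h\pi^0_h t_h\|_{L^2(\Omega)}$, since $\|h t_h\|_{L^2(\Omega)}\le C(\|h\pi^0_h t_h\|_{L^2(\Omega)}+|t_h|_{s_5})$.

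To recover these I perturb the diagonal test function by two terms with small parameters $\delta_\varsigma,\delta_t>0$, taking $v_h=y_h$, $w_h=t_h-\delta_\varsigma h^{-2}\varsigma_h$ and $\mu_h=\varsigma_h+\delta_t h^2\pi^0_h t_h$, all admissible because $\varsigma_h,\pi^0_h t_h\in V_h^0\subset V_h$. The cancellations above persist, the terms $-m(\varsigma_h,w_h)$ and $m(t_h,\mu_h)$ now produce the desired squared norms $\delta_\varsigma\|h^{-1}\varsigma_h\|^2$ and $\delta_t\|h\pi^0_h t_h\|^2$, and one is left with exactly two cross terms to absorb, namely $\delta_\varsigma h^{-2}s_5(t_h,\varsigma_h)$ and $\delta_t h^2 a_h(y_h,\pi^0_h t_h)$.

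The crux, and the main obstacle, is the second cross term: bounded naively it carries $\|\nabla y_h\|_{L^2(\Omega)}$, which by \eref{Poincare} is only $O(h^{-1})$ and cannot be absorbed. The remedy is to integrate by parts element-wise in $a_h(y_h,\pi^0_h t_h)$; since $y_h$ is piecewise affine ($\Delta y_h=0$ on each $K$) and $\pi^0_h t_h$ vanishes on $\partial\Omega$, only interior-face jumps survive, giving $a_h(y_h,\pi^0_h t_h)=\sum_{F\in\mathcal{F}_I}\int_F\jump{\nabla y_h\cdot n_F}\,\pi^0_h t_h\,ds$. Bounding the jump factor through $|y_h|_{s_1}$ (absorbing the weight $(\gamma_1 h)^{-1/2}$) and the trace $\|\pi^0_h t_h\|_{L^2(F)}$ by $Ch^{-1/2}\|\pi^0_h t_h\|_{L^2(K)}$ via \eref{trace}, a Cauchy–Schwarz over faces yields $h^2|a_h(y_h,\pi^0_h t_h)|\le C\gamma_1^{-1/2}\,|y_h|_{s_1}\,\|h\pi^0_h t_h\|_{L^2(\Omega)}$; this is precisely the step where $\gamma_1>0$ is indispensable. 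For the first cross term I use \eref{eq:s_stab} with $i=5$ together with an inverse inequality to get $h^{-2}|\varsigma_h|_{s_5}\le C\|h^{-1}\varsigma_h\|$, hence $\delta_\varsigma h^{-2}|s_5(t_h,\varsigma_h)|\le C\delta_\varsigma\,|t_h|_{s_5}\,\|h^{-1}\varsigma_h\|$. A Young inequality on each cross term then sends the $\|h^{-1}\varsigma_h\|^2$ and $\|h\pi^0_h t_h\|^2$ parts into the matching good terms and the $|t_h|_{s_5}^2$, $|y_h|_{s_1}^2$ parts into the diagonal, once $\delta_\varsigma,\delta_t$ are fixed small enough, giving $A_{SR}[(y_h,t_h,\varsigma_h),(v_h,w_h,\mu_h)]\ge c\,\tnorm{(y_h,t_h,\varsigma_h)}^2$. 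Finally, a term-by-term triangle-inequality check on $\tnorm{(v_h,w_h,\mu_h)}$—reusing $h^{-2}|\varsigma_h|_{s_5}\le C\|h^{-1}\varsigma_h\|$ and $\|h\pi^0_h t_h\|\le\|h t_h\|$—shows $\tnorm{(v_h,w_h,\mu_h)}\le C\,\tnorm{(y_h,t_h,\varsigma_h)}$, and dividing completes the proof.
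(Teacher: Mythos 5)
Your proof is correct and follows essentially the same route as the paper: the same test function $(y_h,\,t_h-\beta h^{-2}\varsigma_h,\,\varsigma_h+\beta h^2\pi^0_h t_h)$ (the paper uses a single parameter $\beta$ where you use $\delta_\varsigma,\delta_t$), the same element-wise integration by parts plus trace inequality to absorb $h^2 a_h(y_h,\pi^0_h t_h)$ into $|y_h|_{s_1}^2$ and $\|h\pi^0_h t_h\|^2$, the same use of \eref{eq:s_stab} for the $s_5$ cross term, and the same appeal to Lemma \ref{lem:stab_bound} with quasi-uniformity to recover $\|h t_h\|_{L^2(\Omega)}$ from $\|h\pi^0_h t_h\|_{L^2(\Omega)}+|t_h|_{s_5}$. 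Your explicit tracking of the $\gamma_1^{-1/2}$ factor in the jump estimate is a small refinement over the paper's statement, but the argument is the same.
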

\begin{proof}
For some $\beta>0$ to be fixed, take $v_h = y_h$, $w_h = t_h - \beta h^{-2} \varsigma_h$, $\mu_h=\varsigma_h +
\beta h^2 \pi^0_h t_h$ to obtain
\begin{equation*}\begin{array}{ll}
& A_{SR}[(y_h,t_h,\varsigma_h), (y_h, t_h - \beta  h^{-2} \varsigma_h,\varsigma_h +
\beta h^2  \pi^0_h t_h)]  =  |y_h|^2_{s_1} + |t_h|^2_{s_5} +
\|y_h\|^2_{L^2(\Omega)} \\[2mm] & \qquad  + 
\beta \|h^{-1} \varsigma_h\|^2_{L^2(\Omega)} 
+ \beta\|h  \pi^0_h t_h\|^2_{L^2(\Omega)} -
\beta s_5(t_h,h^{-2} \varsigma_h) - \beta  a_h(y_h, h^2 \pi^0_h t_h).
\end{array}\end{equation*}
Observing that Cauchy-Schwarz inequalities, arithmetic-geometric
inequalities and the stability inequality \eref{eq:s_stab}, with $i=5$ leads to
\[
s_5(t_h,h^{-2} \varsigma_h) \leq  \frac{1}{2} |t_h|^2_{s_5}+\frac12
C_{si}^2  \|h^{-1} \varsigma_h\|^2_{L^2(\Omega)}.
\]
Using partial integration, the fact that $ \pi^0_h t_h\vert_{\partial
  \Omega} = 0$ and a trace inequality \eref{trace} we have the bound
\[
a_h(y_h, h^2\pi^0_h  t_h) \leq \frac1{2} |y_h|_{s_1}^2 +\frac12 C^2_t
 \|h  \pi^0_h t_h\|^2_{L^2(\Omega)}.
\]

We may then fix $\beta = \min(C_{si}^{-2}, C_t^{-2})$ to show that for some $c>0$ depending only on the mesh geometry
\begin{equation*}\begin{array}{l}
c( |y_h|^2_{s_1} + |t_h|^2_{s_5} +
\|y_h\|^2_{L^2(\Omega)}+ 
\alpha \|h^{-1} \varsigma_h\|^2_{L^2(\Omega)} 
+ \alpha \|h  \pi^0_h t_h\|^2_{L^2(\Omega)}  ) \\[2mm]
\qquad\leq A_{SR}[(y_h,t_h,\varsigma_h), (y_h, t_h  - \beta h^{-2} \varsigma_h,\varsigma_h +
\beta h^2 t_h)].
\end{array}\end{equation*}
By Lemma \ref{lem:stab_bound} and the quasi uniformity of the mesh there holds for some $c>0$ depending
only on the mesh geometry
\[
c \|h  t_h\|^2_{L^2(\Omega)} \leq \|h  \pi^0_h t_h\|^2_{L^2(\Omega)} + |t_h|^2_{s_5} 
\]
and it follows that for some $c>0$ depending
only on the mesh geometry
\[
c \tnorm{(y_h,t_h,\varsigma_h)}^2 \leq A_{SR}[(y_h,t_h,\varsigma_h), (y_h, t_h + h^{-2} \varsigma_h,\varsigma_h +
h^2 t_h)].
\]
To conclude we need to show that
\[
\tnorm{ (y_h, t_h + h^{-2} \varsigma_h,\varsigma_h +
h^2 t_h)} \leq C \tnorm{(y_h,t_h,\varsigma_h)}.
\]
To this end we note that
\[
\tnorm{ (y_h, t_h + h^{-2} \varsigma_h,\varsigma_h +
h^2 t_h)} \leq \tnorm{(y_h,t_h,\varsigma_h)} + \tnorm{ (0, h^{-2} \varsigma_h,
h^2 t_h)}.
\]
For the second term in the right hand side we may write
\begin{equation*}\begin{array}{l}
\tnorm{ (0, h^{-2} \varsigma_h,
h^2 t_h)}^2 = \|h (h^{-2} \varsigma_h)\|_{L^2(\Omega)}^2 + \|h^{-1}
(h^2 t_h)\|_{L^2(\Omega)}^2 +|h^{-2} \varsigma_h|_{s_5}^2\\[2mm]
\qquad \leq C (\|h^{-1} \varsigma_h\|_{L^2(\Omega)}^2 + \|
h t_h\|_{L^2(\Omega)}^2 +|\varsigma_h|_{s_1}^2)
\leq C \tnorm{(y_h,t_h,\varsigma_h)}^2.
\end{array}\end{equation*}
where the constant only depends on the constant of quasiuniformity of
the meshes.
\end{proof}
\begin{proposition}\label{first_estimate}
Let $(u,q) \in W\times L^2(\Omega)$ satisfy \eref{eq:constraint} and
let $(u_h,q_h\lambda_h) \in \mathcal{V}_h^{SR}$ be the solution
of \eref{eq:crazy1}-\eref{eq:crazy2}. Then there holds
\[
\tnorm{(u-u_h, q-q_h,\lambda_h)} \leq C (h \|q\|_{L^2(\Omega)} + \|u - u_0\|_{L^2(\Omega)})
\]
and 
\[
\|\nabla (u - u_h)\|_{L^2(\Omega)} + \|q - q_h\|_{L^2(\Omega)} \leq C (\|q\|_{L^2(\Omega)} + h^{-1} \|u - u_0\|_{L^2(\Omega)}).
\]
\end{proposition}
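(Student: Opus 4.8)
The plan is to follow the same inf-sup/projection framework as in the preceding results: split the error with suitable projections, reduce the estimate to a \emph{discrete} error that is controlled by the inf-sup condition of Proposition~\ref{infsup}, and then show that the arising residual consists only of a data term plus weakly consistent stabilization contributions that are $O(h)$. The inf-sup stability being already in hand, the real content is the consistency computation and the bounding of the stabilization terms under the low regularity $q\in L^2(\Omega)$.

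First I would introduce the Ritz projection $r_h u\in V_h^0$ and the $L^2$-projection $\pi_h q\in V_h$, and set $\xi_h:=u_h-r_h u$, $\eta_h:=q_h-\pi_h q$. By the triangle inequality
\[
\tnorm{(u-u_h,\,q-q_h,\,\lambda_h)}\leq \tnorm{(u-r_h u,\,q-\pi_h q,\,0)}+\tnorm{(\xi_h,\eta_h,\lambda_h)},
\]
where the first term is $O\!\big(h(\|u\|_{H^2(\Omega)}+\|q\|_{L^2(\Omega)})\big)$ by the approximation estimate \eref{approx} together with the Ritz estimate quoted above. For the discrete remainder I would apply Proposition~\ref{infsup}, which reduces everything to bounding the numerator $A_{SR}[(\xi_h,\eta_h,\lambda_h),(v_h,w_h,\mu_h)]$ uniformly over test triples of unit triple norm.

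The core step is the Galerkin-orthogonality-with-stabilization identity. Using the discrete equations \eref{eq:crazy1}--\eref{eq:crazy3} (whose right-hand side is $m(u_0,v_h)$) and subtracting $A_{SR}[(r_h u,\pi_h q,0),(v_h,w_h,\mu_h)]$, I expect three simplifications. Since $\mu_h\in V_h^0\subseteq V_h$ and $\pi_h$ is the $L^2$-projection onto $V_h$, the term $m(q-\pi_h q,\mu_h)$ vanishes; by the definition of the Ritz projection and integration by parts with $-\Delta u=q$ one has $a_h(r_h u,\mu_h)=a_h(u,\mu_h)=m(q,\mu_h)$, so the remaining $\mu_h$-contributions cancel; and since $u\in H^2(\Omega)$ has jump-free gradient, the only surviving stabilization terms are $s_1(r_h u,v_h)$ and $s_5(\pi_h q,w_h)$. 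This leaves
\[
A_{SR}[(\xi_h,\eta_h,\lambda_h),(v_h,w_h,\mu_h)]=m(u_0-u,v_h)+m(u-r_h u,v_h)-s_1(r_h u,v_h)-s_5(\pi_h q,w_h).
\]

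It remains to estimate the four terms by Cauchy--Schwarz and absorb them into $\tnorm{(v_h,w_h,\mu_h)}$. The data term contributes $\|u-u_0\|_{L^2(\Omega)}$ and the Ritz term $O(h^2|u|_{H^2(\Omega)})$. For the first stabilization term I would use $s_1(r_h u,v_h)\leq |r_h u|_{s_1}|v_h|_{s_1}$ with $|r_h u|_{s_1}=|r_h u-u|_{s_1}\leq C h|u|_{H^2(\Omega)}$, obtained from a trace inequality and the Ritz estimate, exploiting again that $\nabla u$ carries no jumps. The delicate point---and the main obstacle---is the term $s_5(\pi_h q,w_h)\leq |\pi_h q|_{s_5}|w_h|_{s_5}$, since $q$ is only in $L^2(\Omega)$ and no jump of $\nabla q$ is available; here one must instead exploit the strong mesh weighting of $s_5$. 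Using the first inequality in \eref{eq:s_stab} with $i=5$ and the $L^2$-stability of $\pi_h$ gives $|\pi_h q|_{s_5}\leq C\|h\,\pi_h q\|_{L^2(\Omega)}\leq C h\|q\|_{L^2(\Omega)}$, so the $h^5$-scaling converts mere $L^2$-regularity into an $O(h)$ bound. Merging the $h|u|_{H^2(\Omega)}$ contributions into $h\|q\|_{L^2(\Omega)}$ via the shift theorem $\|u\|_{H^2(\Omega)}\leq C\|q\|_{L^2(\Omega)}$, Proposition~\ref{infsup} yields $\tnorm{(\xi_h,\eta_h,\lambda_h)}\leq C(h\|q\|_{L^2(\Omega)}+\|u-u_0\|_{L^2(\Omega)})$, which gives the first displayed estimate. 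The second estimate is then immediate: $\|h(q-q_h)\|_{L^2(\Omega)}$ is controlled by the triple norm, so dividing by $h$ bounds $\|q-q_h\|_{L^2(\Omega)}$, while $\|\nabla(u-u_h)\|_{L^2(\Omega)}$ follows from an inverse inequality applied to $\xi_h$ together with the Ritz gradient estimate.
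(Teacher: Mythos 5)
Your proof is correct and follows essentially the same route as the paper: split the error via projections, invoke the inf-sup stability of Proposition \ref{infsup}, and bound the residual terms one by one, the key point in both arguments being $|\pi_h q|_{s_5}\leq C\|h\,\pi_h q\|_{L^2(\Omega)}\leq Ch\|q\|_{L^2(\Omega)}$ from \eref{eq:s_stab}. The only (harmless) difference is that you use the Ritz projection where the paper uses the Lagrange interpolant $i_h u$, so the consistency term $a_h(u-i_h u,\mu_h)$ vanishes in your version instead of being bounded by $Ch\|q\|_{L^2(\Omega)}\|h^{-1}\mu_h\|_{L^2(\Omega)}$ as in the paper.
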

\begin{proof}
Considering \eref{approx} it is sufficient to prove the estimate for
the discrete errors  $\xi_h:= u_h - i_h u$ and $\eta_h := q_h -\pi_h
q$. From Proposition \ref{infsup} we know
\[
c_s \tnorm{(\xi_h,\eta_h,\lambda_h)} \leq \sup_{(v_h,w_h,\mu_h) \in
  \mathcal{V}_h^{SR}} \frac{A_{SR}[(\xi_h,\eta_h,\lambda_h),
  (v_h,w_h,\mu_h)]}{\tnorm{(v_h,w_h,\mu_h)}}.
\] 
Using the definition of \eref{eq:constraint},
\eref{eq:crazy1}-\eref{eq:crazy2} and the orthogonality of the
$L^2$-projection we may write
\begin{equation*}\begin{array}{lll}
A_{SR}[(\xi_h,\eta_h,\lambda_h), (v_h,w_h,\mu_h)] & = & m(u_0-u,v_h) - m(i_h u - 
u,v_h) - s_1(i_h u,v_h) \\[2mm] && + s_5(\pi_h q, w_h)- a_h(u- i_h
u,\mu_h)\\[2mm] & = & I+II+III+IV+V.
\end{array}\end{equation*}
We see that using a Cauchy-Schwarz inequality, the approximation
properties of the Lagrange-interpolant and the regularity of $u$  we have
\[
I + II \leq (\|u - u_0\|_{L^2(\Omega)} + C h^2 \|q\|_{L^2(\Omega)}) ) \|v_h\|_{L^2(\Omega)}.
\]
Similarly for term three we use Cauchy-Schwarz inequality and the
approximation result \eref{approx} to obtain
\[
III \leq s_1(u - i_h u, u - i_h u)^{\frac12} s_1(v_h,v_h)^{\frac12}
\leq C h \|q\|_{L^2(\Omega)}  |v_h|_{s_1} .
\]
In term $IV$ we use an integration by parts followed by the stability
\eref{eq:s_stab}$_1$, with $i=5$, to obtain
\[
IV \leq s_5(\pi_h q,\pi_h q)^{\frac12} s_5(w_h,w_h)^{\frac12} \leq C h
\|q\|_{L^2(\Omega)} | w_h|_{s_5}.
\]
Finally for term $V$ we proceed using Cauchy-Schwarz inequality followed
by an inverse inequality to obtain
\begin{equation*}
V \leq \|\nabla (u - i_h u)\|_{L^2(\Omega)} \|h^{-1}
\mu_h\|_{L^2(\Omega)}  \leq  C h
\|q\|_{L^2(\Omega)}  \|h^{-1}
\mu_h\|_{L^2(\Omega)}.
\end{equation*}
Collecting the bounds for the terms $I-V$ above we deduce that the
following bound holds
\begin{eqnarray*}
&A_{SR}[(\xi_h,\eta_h,\lambda_h), (v_h,w_h,\mu_h)]  \leq C( h \|q\|_{L^2(\Omega)} + \|u
- u_0\|_{L^2(\Omega)}) \\[2mm]
&\qquad \qquad \times (\|v_h\|^2_{L^2(\Omega)}  + | w_h|^2_{s_5}  +
\|h^{-1} \mu_h\|_{L^2(\Omega)}^2 +  |v_h|^2_{s_1} )^{\frac12}\\[2mm]
&\qquad \leq C( h \|q\|_{L^2(\Omega)} + \|u
- u_0\|_{L^2(\Omega)}) \tnorm{(v_h,w_h,\mu_h)}
\end{eqnarray*}
which proves the first claim. The second claim follows immediately by
the definition of the triple norm, an inverse inequality and the first inequality.
\end{proof}
\begin{theorem}\label{th:perturb}
Let $(u,q) \in W \times L^2(\Omega)$ satisfy \eref{eq:constraint} and
let $(u_h,\lambda_h)$ be the solution
of \eref{eq:crazy1}-\eref{eq:crazy2}. Then there holds
\[
\|\nabla (u - u_h)\|_{L^2(\Omega)} + \|q - q_h\|_{H^{-1}(\Omega)} \leq
C( h^{\frac12} \|q\|_{L^2(\Omega)}+ h^{-\frac12} \|u - u_0\|_{L^2(\Omega)}).
\]
\end{theorem}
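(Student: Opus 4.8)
The plan is to bound the two contributions $\|\nabla(u-u_h)\|_{L^2(\Omega)}$ and $\|q-q_h\|_{H^{-1}(\Omega)}$ separately, reducing everything to the $L^2$- and energy-norm bounds already furnished by Proposition \ref{first_estimate}. Writing $e=u-u_h$, recall from that proposition the bound $\|e\|_{L^2(\Omega)}\le C(h\|q\|_{L^2(\Omega)}+\|u-u_0\|_{L^2(\Omega)})$ (contained in the triple norm) together with $\|\nabla e\|_{L^2(\Omega)}+\|q-q_h\|_{L^2(\Omega)}\le C(\|q\|_{L^2(\Omega)}+h^{-1}\|u-u_0\|_{L^2(\Omega)})$. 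The strategy is: (i) prove a sharper estimate for $q-q_h$ in the weaker norm $H^{-2}(\Omega):=(H^2(\Omega)\cap H^1_0(\Omega))'$; (ii) interpolate between $H^{-2}$ and $L^2$ to obtain the $H^{-1}$ bound; (iii) bootstrap the gradient estimate using the $H^{-1}$ control of the source error.

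First I would bound $\|q-q_h\|_{H^{-2}(\Omega)}$ by duality. For $w\in H^2(\Omega)\cap H^1_0(\Omega)$ let $w_h=i_h w\in V_h^0$ and split $(q-q_h,w)=(q-q_h,w-w_h)+a(e,w_h)$, using the Galerkin orthogonality $a(e,\mu_h)=(q-q_h,\mu_h)$ for $\mu_h\in V_h^0$ that follows from \eref{eq:crazy3} and \eref{eq:constraint}. Writing $a(e,w_h)=a(e,w_h-w)+a(e,w)$ and integrating by parts in the last term (the boundary contribution vanishes since $e,w\in H^1_0(\Omega)$) gives $a(e,w)=-(e,\Delta w)$. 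Cauchy--Schwarz together with the interpolation estimate \eref{eq:approx_standard} then yields
\[
(q-q_h,w)\le C\left(h^2\|q-q_h\|_{L^2(\Omega)} + h\|\nabla e\|_{L^2(\Omega)} + \|e\|_{L^2(\Omega)}\right)\|w\|_{H^2(\Omega)}.
\]
Inserting the three bounds from Proposition \ref{first_estimate} recalled above, every term collapses to $C(h\|q\|_{L^2(\Omega)}+\|u-u_0\|_{L^2(\Omega)})$, so that $\|q-q_h\|_{H^{-2}(\Omega)}\le C(h\|q\|_{L^2(\Omega)}+\|u-u_0\|_{L^2(\Omega)})$.

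Next I would use the interpolation inequality $\|v\|_{H^{-1}(\Omega)}^2\le \|v\|_{H^{-2}(\Omega)}\|v\|_{L^2(\Omega)}$, which on the convex domain $\Omega$ is immediate from $\|v\|_{H^{-1}(\Omega)}^2=((-\Delta)^{-1}v,v)\le \|(-\Delta)^{-1}v\|_{L^2(\Omega)}\|v\|_{L^2(\Omega)}$ and the norm equivalence $\|(-\Delta)^{-1}v\|_{L^2(\Omega)}\simeq\|v\|_{H^{-2}(\Omega)}$. Combined with $\|q-q_h\|_{L^2(\Omega)}\le C(\|q\|_{L^2(\Omega)}+h^{-1}\|u-u_0\|_{L^2(\Omega)})$ the product factorizes as a perfect square, $(h\|q\|+\|u-u_0\|)(\|q\|+h^{-1}\|u-u_0\|)=(h^{1/2}\|q\|+h^{-1/2}\|u-u_0\|)^2$, giving the claimed $H^{-1}$ estimate. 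Finally, for the gradient I would write $\|\nabla e\|^2=a(e,e)=a(e,u-i_hu)+(q-q_h,i_hu-u_h)$, where the second equality uses $i_hu-u_h\in V_h^0$ and the same orthogonality. Bounding the first term by $Ch\|q\|_{L^2(\Omega)}\|\nabla e\|_{L^2(\Omega)}$ and the second by $\|q-q_h\|_{H^{-1}(\Omega)}\|i_hu-u_h\|_{H^1(\Omega)}\le \|q-q_h\|_{H^{-1}(\Omega)}(Ch\|q\|_{L^2(\Omega)}+C\|\nabla e\|_{L^2(\Omega)})$, an arithmetic--geometric inequality absorbs the $\|\nabla e\|$ factors and leaves $\|\nabla e\|_{L^2(\Omega)}\le C(h\|q\|_{L^2(\Omega)}+\|q-q_h\|_{H^{-1}(\Omega)})$, which with step (ii) gives the gradient bound. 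I expect the dual-norm detour of steps (i)--(ii) to be the main obstacle: it is where convexity and the shift regularity enter (through the admissible test space $H^2\cap H^1_0$ and the interpolation inequality), and one must check that the $H^{-2}$ duality genuinely gains the two powers of $h$ over $\|\nabla e\|$ needed to beat the $h^{-1}$ loss in the energy estimate; the remaining manipulations are routine interpolation and absorption.
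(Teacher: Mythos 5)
Your proposal is correct, and its overall architecture coincides with the paper's: both hinge on first proving the bound $\sup_{y\in W,\,\|y\|_{H^2}=1}(q-q_h,y)\leq C(h\|q\|_{L^2(\Omega)}+\|u-u_0\|_{L^2(\Omega)})$ in the dual norm of $W=H^2(\Omega)\cap H^1_0(\Omega)$, and then passing to $H^{-1}$ by solving $-\Delta z=q-q_h$ and using $\|\nabla z\|^2=(q-q_h,z)\leq \|q-q_h\|_{-2}\|z\|_{H^2}\leq C\|q-q_h\|_{-2}\|q-q_h\|_{L^2}$, which is exactly your interpolation inequality; the perfect-square factorization you note is the same arithmetic the paper performs. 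Where you differ is in the execution of two sub-steps, and in both cases your route is arguably cleaner. In the dual-norm bound the paper integrates $a_h(u-u_h,i_hy)$ by parts element by element, producing face jump terms that are controlled via a trace inequality and the estimate $s_1(i_hy,i_hy)^{1/2}\leq Ch$; you instead split $a(e,i_hw)=a(e,i_hw-w)+a(e,w)$ and integrate by parts only in the globally smooth term, so the boundary contribution vanishes by $e\in H^1_0(\Omega)$ and no jump machinery is needed. (Your Galerkin orthogonality $a(e,\mu_h)=(q-q_h,\mu_h)$ for $\mu_h\in V_h^0$ does follow from \eref{eq:crazy3} and \eref{eq:constraint} as the scheme is written, without the extra $s_1(\lambda_h,\cdot)$ term the paper carries along.) For the gradient the paper expands $a(e,e)$ into three terms and pairs $(q-q_h,u)$ against $\|u\|_{H^2}$ via the $\|\cdot\|_{-2}$ norm, whereas you use the orthogonality once more, test the source error against $i_hu-u_h\in H^1_0$, and absorb $\|\nabla e\|$ by Young's inequality to get $\|\nabla e\|\leq C(h\|q\|+\|q-q_h\|_{H^{-1}(\Omega)})$; both yield the stated rate. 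The one point to make fully explicit in your write-up is the norm equivalence $\|(-\Delta)^{-1}v\|_{L^2}\simeq\|v\|_{H^{-2}}$, which itself rests on the convexity/shift assumption in both directions; the paper sidesteps this by working directly with $\|z\|_{H^2}\leq C\|q-q_h\|_{L^2}$.
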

\begin{proof}
Define 
\begin{equation}\label{eq:neg2def}
\|q - q_h\|_{-2} := \sup_{y \in W :\; \|y\|_{H^2(\Omega)} = 1}
( q - q_h,y)_{L^2(\Omega)}.
\end{equation}
Then we may use the formulation to write for all $y \in W$ with $\|y\|_{H^2(\Omega)} = 1$,
\[
( q - q_h,y)_{L^2(\Omega)}  =  ( q - q_h,y - i_h y)_{L^2(\Omega)} +
a_h(u - u_h, i_h y) + s_1(\lambda_h, i_h y).
\]
After integration by parts in the second term, using that
$(u-u_h)\vert_{\partial \Omega}=0$, and using the first bound of
Proposition \ref{first_estimate} and the approximation bounds 
\[
\|h^{-\frac12} (u - i_h u)\|_{\mathcal{F}_I} \leq C h
\|u\|_{H^2(\Omega)} \leq C h \|q\|_{L^2(\Omega)} \mbox{ and }  s_1(i_h
y, i_h y)^{\frac12} \leq C h
\]
 we get
\begin{equation*}\begin{array}{lll}
( q - q_h,y)_{L^2(\Omega)} & \leq & C h^2 \|q - q_h\|_{L^2(\Omega)} + \sum_{K} \frac12 \int_{\partial K
  \setminus \partial \Omega} |u - u_h||\jump{\nabla i_h y\cdot n}|
~\mbox{d}s\\[2mm]
& \leq &
 C h^2 \|q - q_h\|_{L^2(\Omega)}  \\[2mm] && +(\|h^{-\frac12} (u - i_h u)\|_{\mathcal{F}_I} +
 \|h^{-1} \xi_h\|_{L^2(\Omega)}) s_1(i_h y, i_h y)^{\frac12} \\[2mm]
& \leq &
C (h \|q \|_{L^2(\Omega)} + \|u_0 - u\|_{L^2(\Omega)}).
\end{array}\end{equation*}
It follows that
\begin{equation}\label{min2est}
\|q - q_h\|_{-2} \leq C (h \|q \|_{L^2(\Omega)} + \|u_0 - u\|_{L^2(\Omega)}).
\end{equation}
Now consider the problem: find $z \in H^1_0(\Omega)$
such that
\[
-\Delta z = q - q_h \mbox{ in the sense of distributions}.
\]
This problem is well posed with $z$ that satisfies
$\|z\|_{H^2(\Omega)} \leq C \|q - q_h\|_{L^2(\Omega)}$.
The $H^{-1}$-norm of $q-q_h$ may be written
\[
\|q - q_h\|_{H^{-1}(\Omega)} := \sup_{y \in H_0^1(\Omega) :\; 
    \|y\|_{H^1(\Omega)} = 1} ( q - q_h,y)_{L^2(\Omega)}.
\]
For all $y \in H^1_0(\Omega)$ we may write
\[
 ( q - q_h,y)_{L^2(\Omega)} = (-\Delta z, y)_{L^2(\Omega)} = (\nabla
 z, \nabla y)_{L^2(\Omega)} \leq \|\nabla z\|_{L^2(\Omega)} \|\nabla y\|_{L^2(\Omega)}.
\]
It follows that
\[
\|q - q_h\|_{H^{-1}(\Omega)} \leq \|\nabla z\|_{L^2(\Omega)} = (\nabla
z, \nabla z)^{\frac12}_{L^2(\Omega)} = (-\Delta z, z) _{L^2(\Omega)}^{\frac12}.
\]
Considering the last term in the right hand side we see that
\begin{equation*}\begin{array}{lll}
 (-\Delta z, z) _{L^2(\Omega)}&=& (q -q_h, z)_{L^2(\Omega)} \leq \|q -
q_h\|_{-2} \|z\|_{H^2(\Omega)} \\[2mm] &\leq& C \|q -
q_h\|_{-2} \|q -
q_h\|_{L^2(\Omega)} \\[2mm]
&\leq &Ch^{-1} (h \|q \|_{L^2(\Omega)} + \|u_0 - u\|_{L^2(\Omega)})^2.
\end{array}\end{equation*}
Taking the square root of the last expression we obtain the desired estimate
for $\|q-q_h\|_{H^{-1}(\Omega)}$.
We now consider the estimate on the error in the gradient of
$u$. Since $u, u_h \in H^1_0(\Omega)$ there holds
\begin{equation*}\begin{array}{lll}
\|\nabla ( u - u_h)\|^2_{L^2(\Omega)} & = & a_h(u- u_h,u-u_h) \\[2mm]
& = &\underbrace{(q,u-u_h)_{L^2(\Omega)}}_{I} - \underbrace{ (q - q_h,
  u_h - u)_{L^2(\Omega)}}_{II} - \underbrace{ (q - q_h,
  u)_{L^2(\Omega)}}_{III}.
\end{array}\end{equation*}
For the first and second terms of the right hand side we use the first
estimate of Proposition \ref{first_estimate} to obtain
\begin{equation*}\begin{array}{lll}
I \leq \|q\|_{L^2(\Omega)} \|u -
u_h\|_{L^2(\Omega)}  & \leq & C \|q\|_{L^2(\Omega)} ( h \|q\|_{L^2(\Omega)} + \|u -
u_0\|_{L^2(\Omega)}) \\[2mm]
& \leq & C ( h \|q\|^2_{L^2(\Omega)} + h^{-1} \|u -
u_0\|^2_{L^2(\Omega)})
\end{array}\end{equation*}
and
\begin{equation*}\begin{array}{lll}
II & \leq & \|q - q_h\|_{L^2(\Omega)} \|u - u_h\|_{L^2(\Omega)} \\[2mm]
& \leq & C (\|q\|_{L^2(\Omega)} +
h^{-1} \|u - u_0\|_{L^2(\Omega)}) (h \|q\|_{L^2(\Omega)} +
\|u - u_0\|_{L^2(\Omega)})\\[2mm]
 & \leq & C ( h \|q\|^2_{L^2(\Omega)} + h^{-1} \|u -
u_0\|^2_{L^2(\Omega)}).
\end{array}\end{equation*}
For the third term we observe that by the definition
\eref{eq:neg2def}, the bound \eref{min2est} and the regularity assumption on $u$ we may write
\begin{equation*}\begin{array}{lll}
III & \leq & \|q - q_h\|_{-2} \|u\|_{H^2(\Omega)} \leq C( h \|q\|_{L^2(\Omega)}+ \|u_0 -
u\|_{L^2(\Omega)} \|) \|q\|_{L^2(\Omega)} \\[2mm]
& \leq &  C ( h \|q\|^2_{L^2(\Omega)} + h^{-1} \|u -
u_0\|^2_{L^2(\Omega)}).
\end{array}\end{equation*}
Collecting the bounds of the terms $I$-$IV$ and taking square roots
concludes the proof.
\end{proof}
\subsection{The effect of measurement error}
Both the above estimates take into account the measurement errors. We
will here discuss the second case in some more detail.
Observe that these estimates are nonstandard in the sense that they
measure the distance of the approximate solution to {\emph{any}} pair
of functions $(u,q)$ that satisfy \eref{eq:constraint}. First assume that
$u_0 \in W$ then $u_0$ satisfies the constraint and $u_h$ will
converge to $u_0$ and $q_h$ to $-\Delta u_0$, according to the
estimates above. Otherwise under our assumptions $u_0$ is a
measurement on the form $u_0 = \tilde u_0 +\delta u_0$ and in this
case the solution that we wish to approximate is $u = \tilde u_0$ and
$q=-\Delta \tilde u_0$. Provided $\|\delta u_0\|_{L^2(\Omega)}$ is a
priori known the estimates above give us a bound on
how close $u_h,q_h$ are to $\tilde u_0$ and
$-\Delta \tilde u_0$. The a perturbation term $\|\delta
u_0\|_{L^2(\Omega)}$ does not vanish under mesh-refinement and that
causes the estimates to blow up if $u_0$ is not in the range of the Laplace operator.

The above results also gives us quantitative information on how the perturbation $\delta u_0 $ in the
measurements affects the computation. For general measurement errors we see that formally refinement
improves the solution as long as
\[
\frac{\|\delta u_0\|_{L^2(\Omega)}}{\|q\|_{L^2(\Omega)}} {<}{<} C h.
\]
Since $q$ is unknown it has to be replaced by $q_h$ in practice, which
is be reasonable as long as $\|q_h\|_{L^2(\Omega)}$ does not grow
under mesh refinement.

If we assume that $u$ is sampled
on some
coarse scale $H$ and $u_0$ is taken to be a piecewise affine interpolant using these
samples. Then $\|u - u_0\|_{L^2(\Omega)} \leq C H^2 \|u\|_{H^2(\Omega)}$ and the
above estimate becomes
\[
\|\nabla (u - u_h)\|_{L^2(\Omega)} + \|q - q_h\|_{H^{-1}(\Omega)} \leq
C( h^{\frac12} + H^2 h^{-\frac12} ) \|q\|_{L^2(\Omega)}.
\]
We see that we have two terms with different behavior as we refine in
$h$. To ensure that the solution improves under mesh refinement we can 
check that the discretization error dominates the measurement
error. This is true as long as
$
h^{\frac12} >>  H^2 h^{-\frac12}
$
or $H^2 << h$. Another a posteriori criterion for when refining the mesh
improves the solution is given by:
\begin{enumerate}
\item  $\|\nabla u_h\|_{L^2(\Omega)}$ and $\|q_h\|_{L^2(\Omega)}$
non-increasing under refinement 
\item $s_1(u_h,u_h),\, s_5(q_h,q_h)$ decreasing
under refinement. 
\end{enumerate}
\section{Numerical examples}
\subsection{Data assimilation}
We consider problem \ref{eq:minim} on the domain $\Omega = (-1,1)\times(-1,1)$ and the 
data $$u_0=(x+1)^2(x-1)(y+1)(y-1)^2$$ with right--hand side $f=-\Delta u_0$, and $\mathcal{M}= (-1/4,1/4)\times(-1/4,1/4)$. We set $\gamma = 10^{-4}$.
The first mesh on $\Omega$, in a sequence, and the mesh on $\mathcal{M}$ is given in Fig. \ref{fig:meshes}. The different domains $B_{r_2}$ on which convergence is measured is 
shown in Fig. \ref{fig:Bdiff}, and the obtained convergence is shown in Fig. \ref{convassim}. Note that the error constant increases with $r_2$ and that the convergence rate decreases slowly, cf. Table \ref{table1}.
\begin{table}\label{table1}
\centering
\begin{tabular}{|c| c | c |c|}
\hline
No. of nodes  & Distance & $L_2$ error & Rate  
\\
\hline
\hline
1313     & 0 &  $0.76\times 10^{-3}$  & - 
\\
\hline
5185     & 0 & $0.53\times 10^{-3}$ & 0.52 
\\
\hline
20609     & 0 & $0.29\times 10^{-3}$ & 0.85
\\
\hline
82177     & 0 & $0.18\times 10^{-3}$ & 0.75
\\
\hline
\hline
1313  & 0.1875 & $0.81\times 10^{-2}$ & - 
\\
\hline
5185     & 0.1875 &  $0.61\times 10^{-2}$ & 0.41 
\\
\hline
20609    & 0.1875 & $0.41\times 10^{-2}$ & 0.58 
\\
\hline
82177     & 0.1875 & $0.29\times 10^{-2}$ & 0.50
\\
\hline
\hline
1313     & 0.375 & $0.37\times 10^{-1}$ & -
\\
\hline
5185     & 0.375 & $0.30\times 10^{-1}$ & 0.30
\\
\hline
20609     & 0.375 & $0.23\times 10^{-1}$ & 0.41
\\
\hline
82177     & 0.375 & $0.17\times 10^{-1}$ & 0.40
\\
\hline
\hline
1313     & 0.625 & 0.20 & -
\\
\hline
5185     & 0.625 & 0.19 & 0.11
\\
\hline
20609     & 0.625 & 0.15 & 0.28
\\
\hline
82177    & 0.625 & 0.13 & 0.30
\\
\hline
\end{tabular}
\caption{Errors and convergence rates underlying Figure \ref{fig:Bdiff}; meshsize is defined as the inverse square root of the number of nodes.}
\end{table}

\subsection{Source reconstruction}
\subsubsection{Convergence for smooth and non--smooth sources, with perturbation of data}

Again, we consider the domain $\Omega = (-1,1)\times(-1,1)$ and the 
data $u_0=(x+1)(x-1)(y+1)(y-1)$, corresponding to the smooth source term $q=2(2-x^2-y^2)$.

In Fig. \ref{tikjump} we show the interpolant of the exact source and a typical solution obtained using the gradient jump stabilization method. In Fig. \ref{tikh1} we show the effact of (properly scaled) Tikhonov regularization using $L_2$, i.e., $\| q\|$,  and $H^1$, i.e.,
$\| \nabla q\|$, regularizations, respectively. Note that the $L_2$ regularization gives the wrong boundary conditions in the discrete scheme, whereas $H^1$ works better while still giving a spurious boundary effect, well known from similar approaches used in fluid mechanics, cf. Burman and Hansbo \cite{BH06}.

The observed convergence using (\ref{eq:crazy1})--(\ref{eq:crazy2}) using only the stabilization term $s_5(q_h,w_h)$ and for a variety of choices of $\gamma$ is shown in Fig. \ref{smooth}.
We note that the convergence $\| q_h-q\|_{L_2(\Omega)}$ and $\| u_h-u\|_{H^1(\Omega)}$ is first order in both cases. The convergence of $u_h$ is completely unaffected by the choice of $\gamma$.

For the non--smooth case we let the solution be two different constant in the radial direction, $u=1$ for $0\leq r\leq 1/4$ and $u=0$ for $3/4\leq r$, interconnected by a cubic $C^1$-polynomial i the radial direction. This means that the source term will have jumps at $r=1/4$ and $r=3/4$ so that 
$q\in H^{1/2-\epsilon}(\Omega)$ for any $\epsilon >0$. In Fig. \ref{nonsmooth} we show the observed rate of convergence, which drops to about $O(h^{1/2})$ for $\| q_h-q\|_{L_2(\Omega)}$ but remains $O(h)$ for $\| u_h-u\|_{H^1(\Omega)}$. The error constant is now affected by $\gamma$ also for the convergence of $u_h$.

Finally, we show the effect of perturbing the data randomly, with constant amplitude and with amplitude decreasing $O(h)$. In Fig. \ref{perturbed} we show the obtained convergence in $H^1$--seminorm. The convergence is $O(h^{-1/2})$ and $O(h)$, respectively, cf. Theorem \ref{th:perturb}.

\subsubsection{Measurement error}

Consider $\Omega =
(0,1)\times (0,1)$ and the right hand side defined as a discontinuous
cross shaped function (see Figure \ref{plot}, left plot) written using boolean binary functions as
\[
f=(x>1/3)*(x<2/3)+ (y>1/3)*(y<2/3).
\]
The data $u_0$ is reconstructed using $P_4$ finite elements on the one
hand on a mesh
that is fitted to the discontinuities of $f$ ($120 \times 120$
structured) resulting in a very accurate solution ($\|u -
u_h\|_{L^2(\Omega)} \leq C (120)^{-5} \|f\|_{L^2(\Omega)}$) and on the other
hand on a mesh that is not fitted to the discontinuities of $f$
($110\times 110$ elements). The unfitted data results in spurious high
frequency oscillations with small amplitude in the high order finite element
solution, as can be seem in Figure \ref{P4_plot} (left fitted data and
right unfitted data). The $L^2$-norm of the difference of the fitted
and the unfitted solution is a good measure of the size of the
perturbation. It is $1.7\times 10^{-4}$.

First we fixed $\gamma=10^{-6}$
after a few steps of a line search algorithm, using the same stabilization
parameter for $s_1$ and $s_5$. We solved the problem using $6$
unstructured (Delaunay) meshes
with $20$, $30$, $40$, $60$, $80$ and $100$ elements on the domain side. The $L^2$-error
in $q$ is given in Figure \ref{graph}. Circle markers indicate the
result obtained with the stabilized method and square markers the
result obtained taking $\gamma=0$ above. In the left plot the data
$u_0$ is given by the accurate computation and in the right plot the
perturbed data is used. As can be seen in the plots, for the
unperturbed data the stabilized method performs slightly better than
the unstabilized method and has approximately $h^{\frac12}$ order
convergence in the $L^2$-norm, which is optimal. For the perturbed
data on the other hand the situation is dramatically different,
whereas the stabilized method almost has the same convergence, on
coarse meshes and only stagnates on finer meshes when the effect of
the perturbation becomes important, the unstabilized method diverges.

The exact and reconstructed source
function for the case of fitted data (on the $80\times 80$ mesh) is
given in Figure \ref{plot} and with unfitted data in Figure
\ref{plot2}.

\bibliographystyle{abbrv}
\bibliography{Cauchy}


\begin{figure}
\centering
\includegraphics[width=7cm]{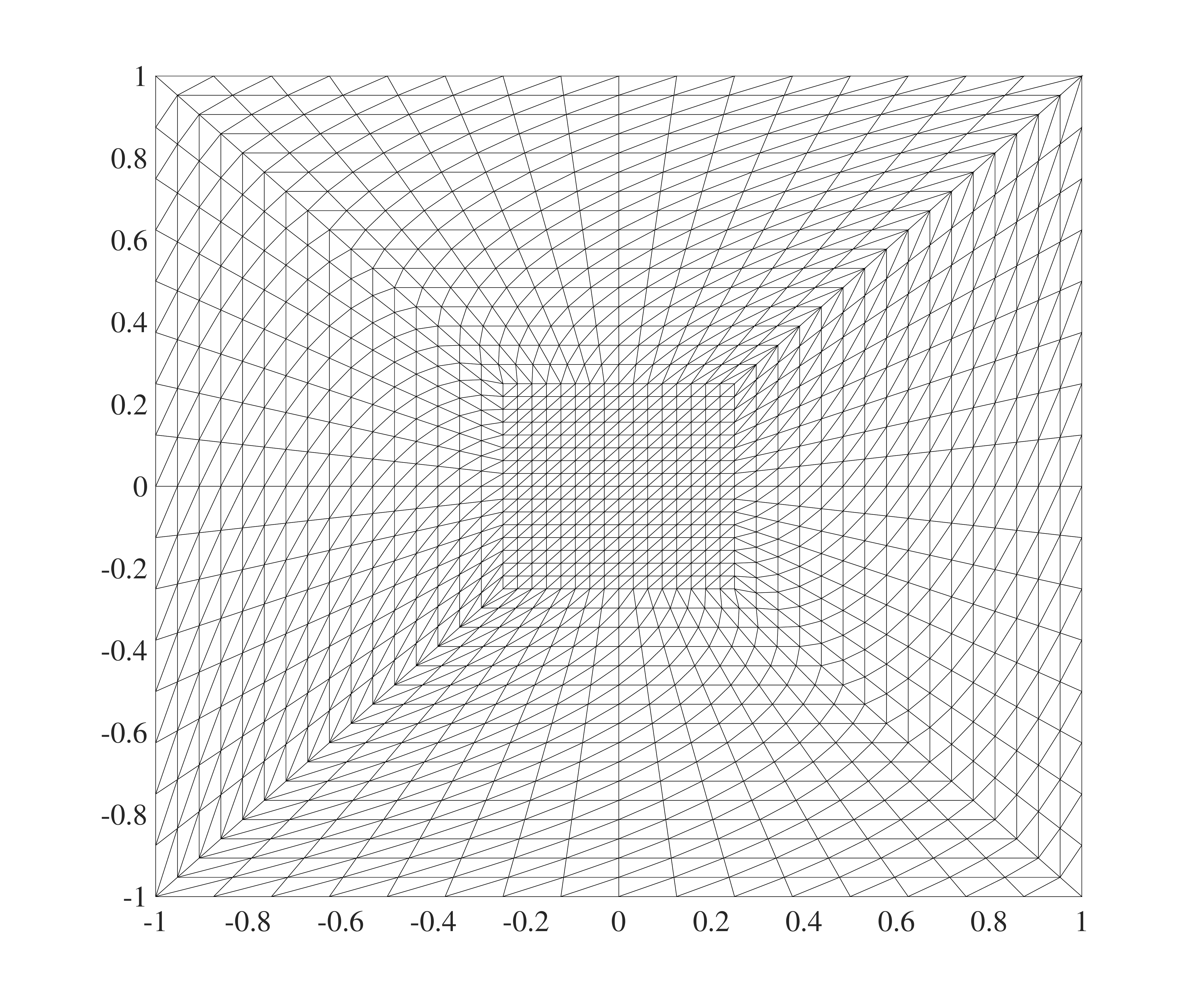}\includegraphics[width=7cm]{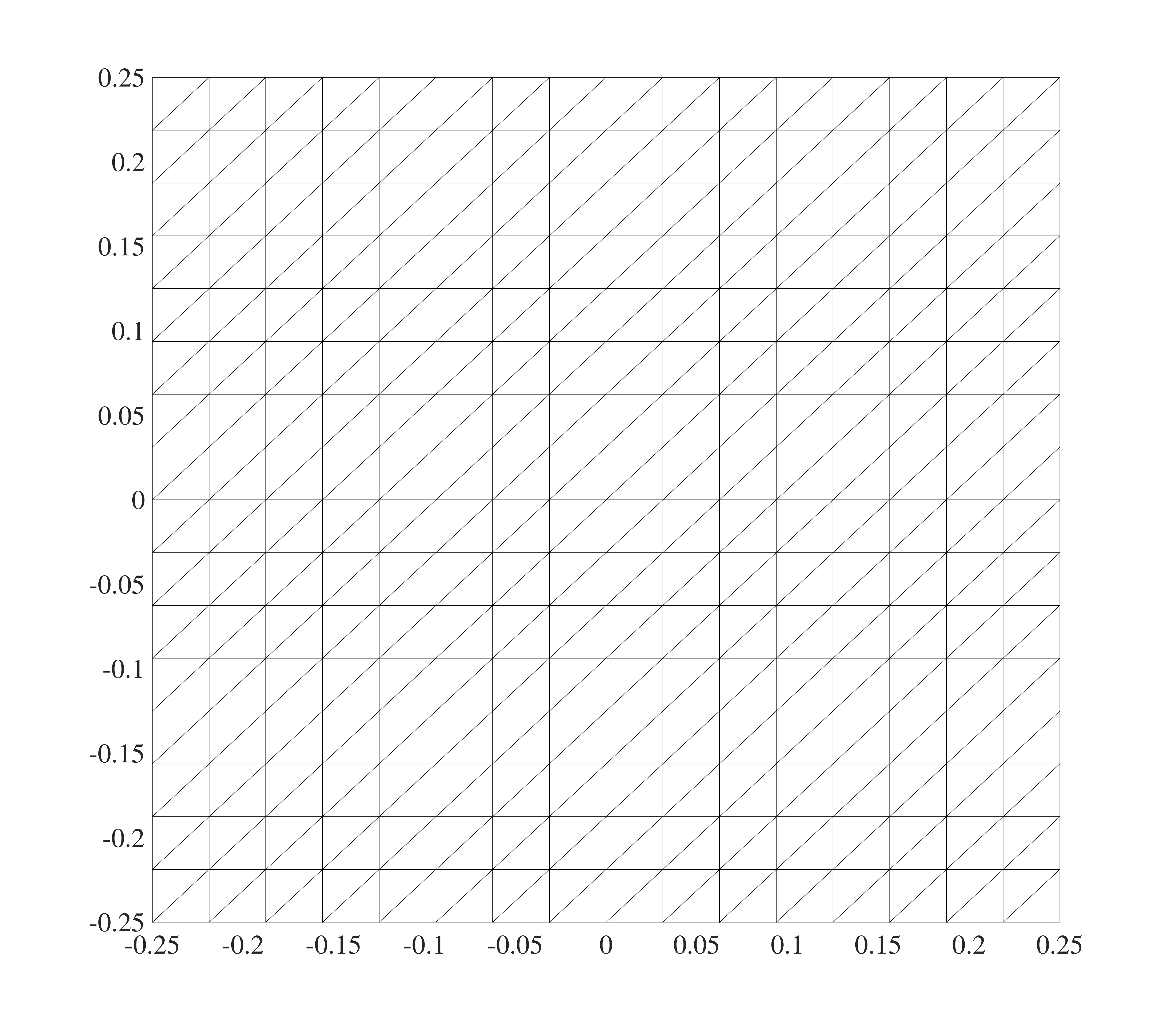}
\caption{Mesh on $\Omega$ and on $\mathcal{M}$.}\label{fig:meshes}
\end{figure}
\begin{figure}
\centering
\includegraphics[width=5cm]{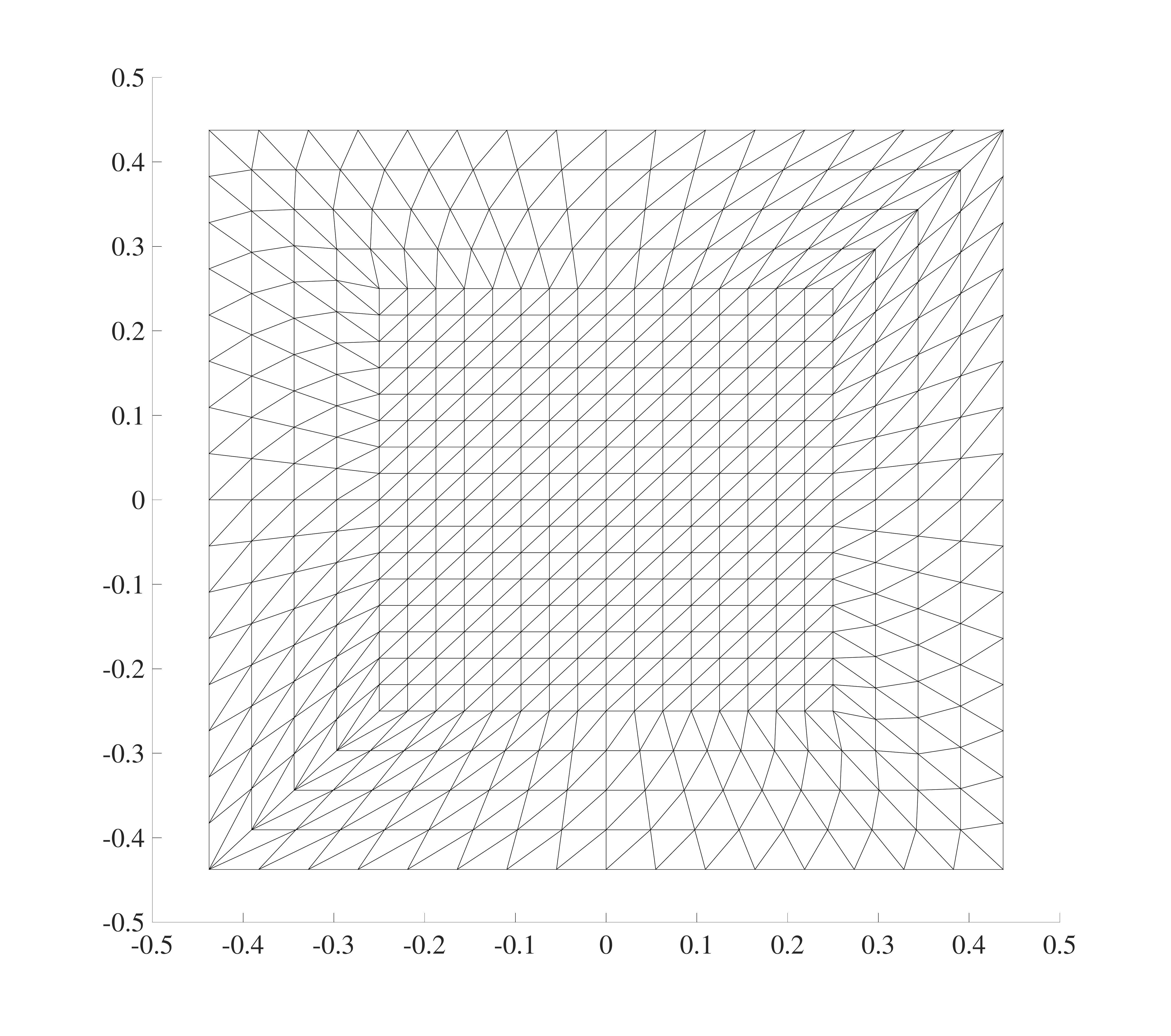}\includegraphics[width=5cm]{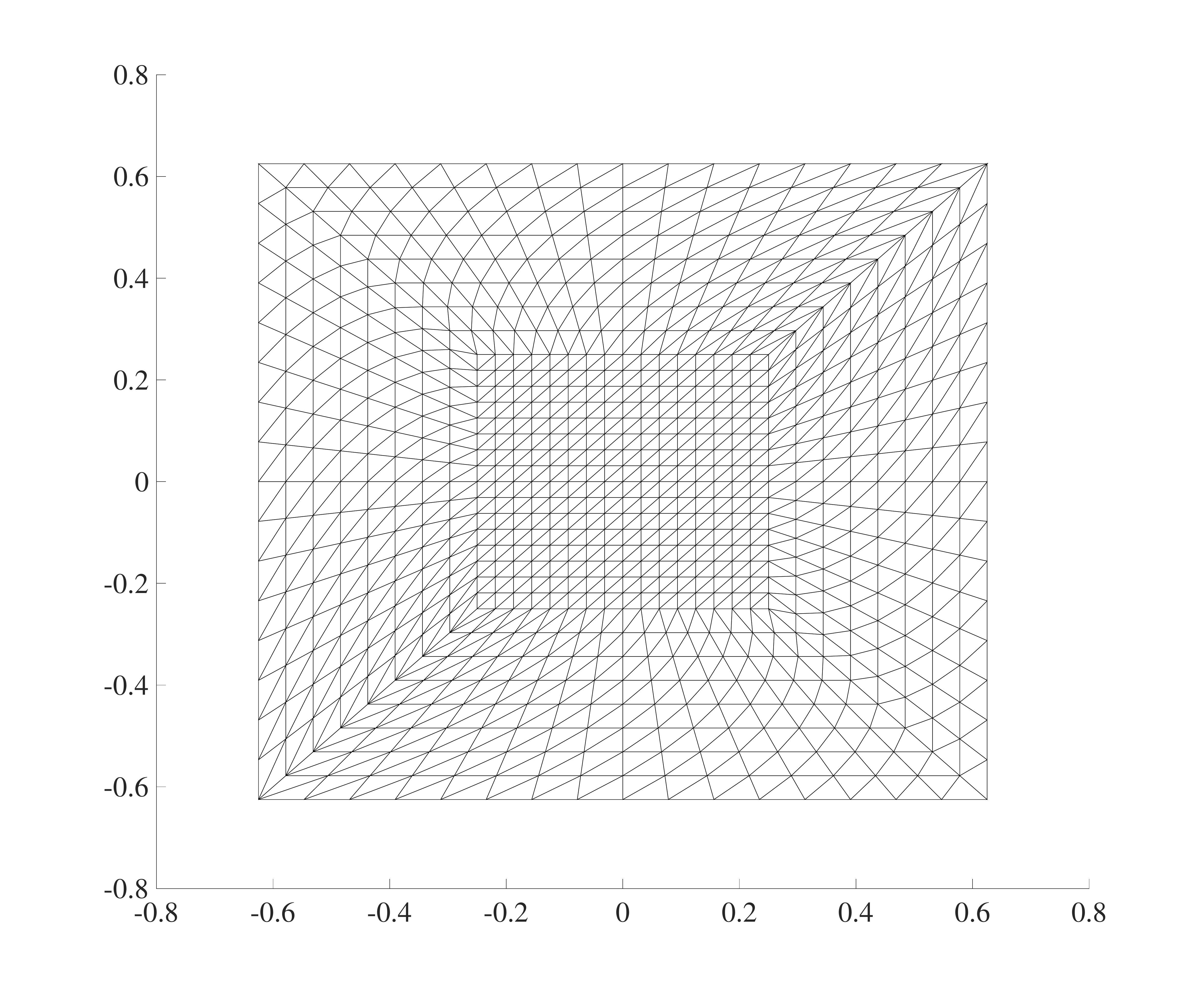}\includegraphics[width=5cm]{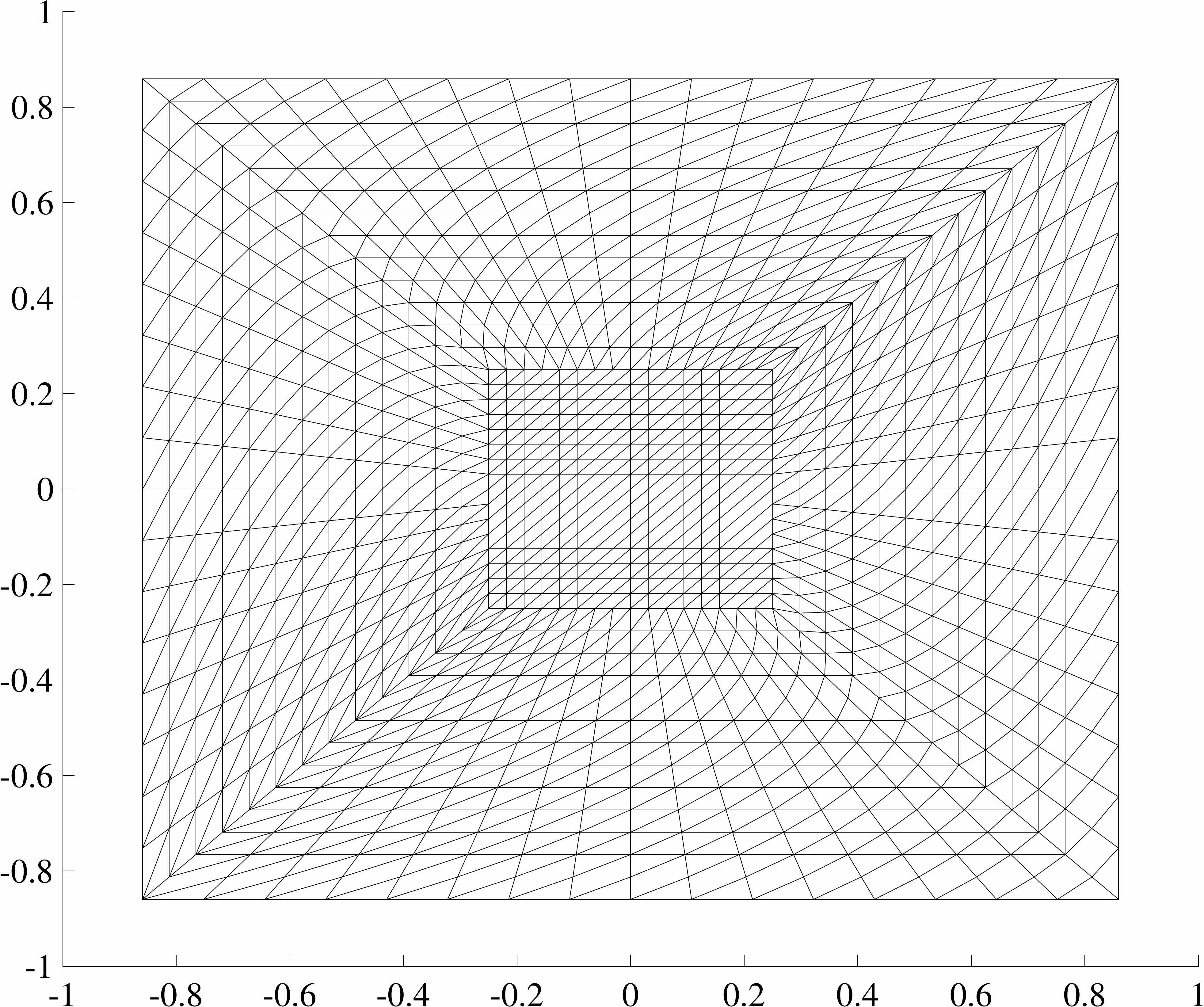}
\caption{Successively larger $B_{r_2}$; horizontal and verical distance from $\partial B_{r_2}$ to $\mathcal{M}$ is $0.1875$, $0.375$ and $0.625$.}\label{fig:Bdiff}
\end{figure}
\begin{figure}
\centering
\includegraphics[width=14cm]{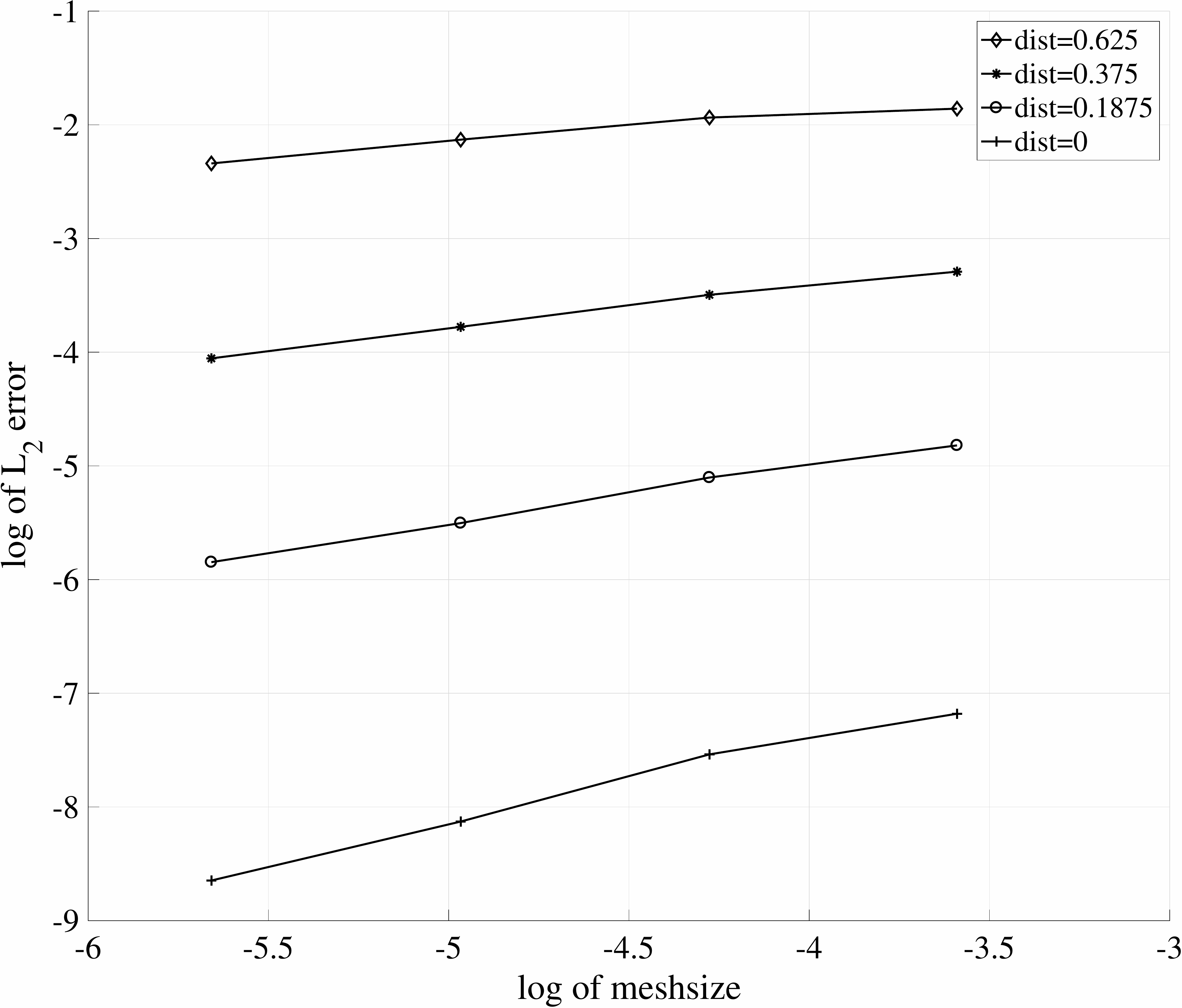}
\caption{Convergence for on different $B_{r_2}$.}\label{convassim}
\end{figure}
\begin{figure}
\centering
\includegraphics[width=7cm]{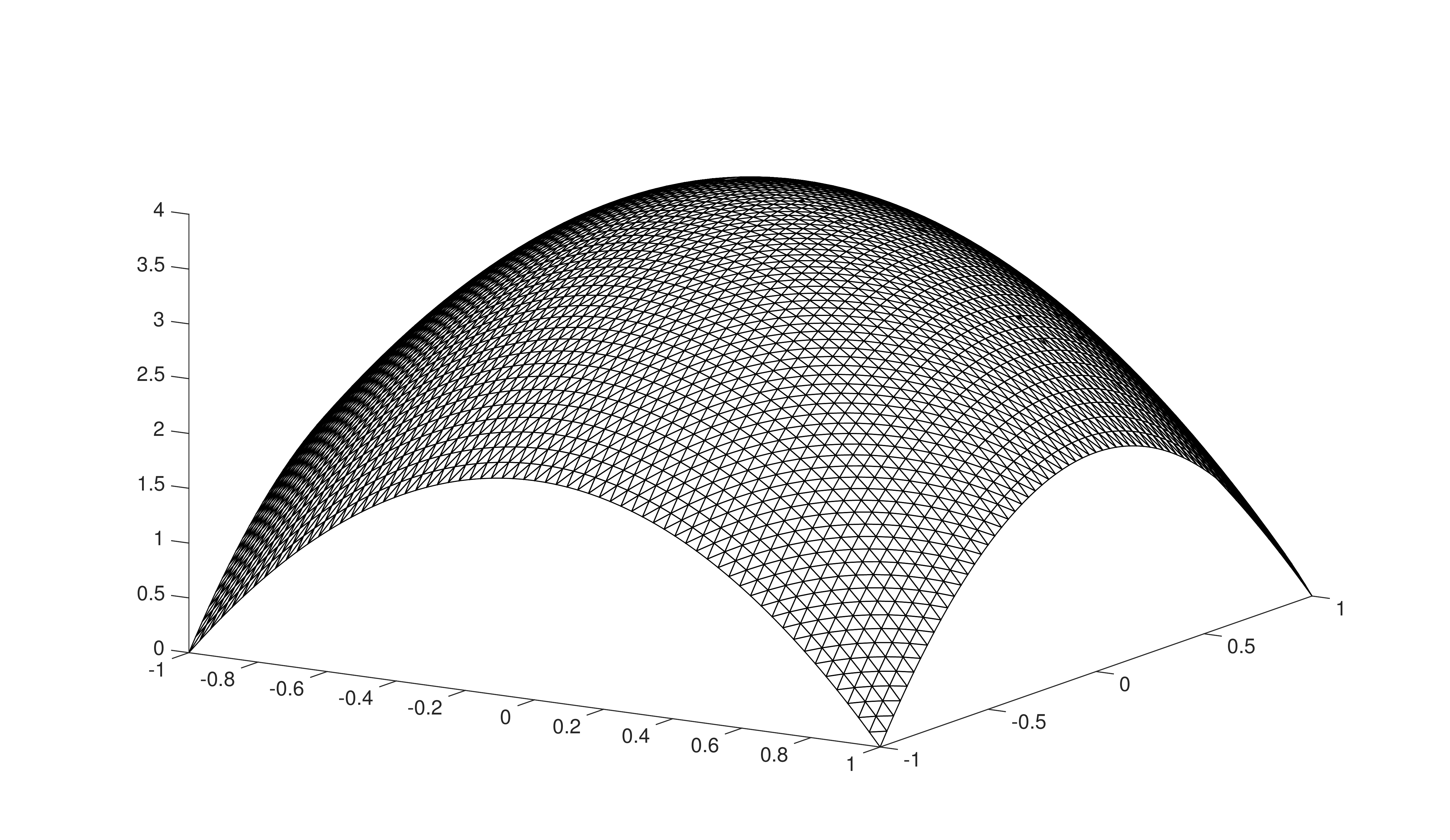}\includegraphics[width=7cm]{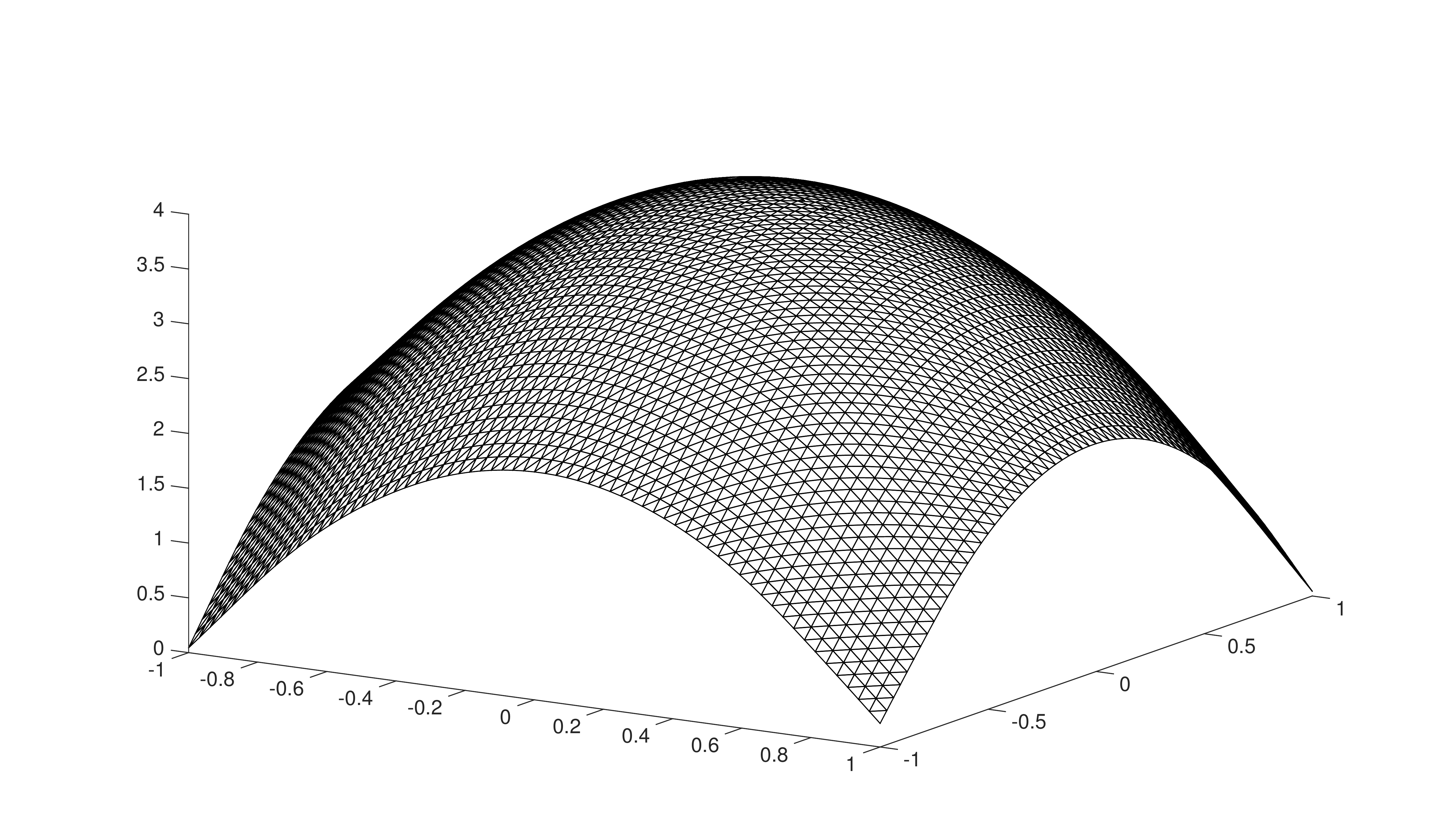}
\caption{Left: interpolated source term. Right: source term obtained with the present method.}\label{tikjump}
\end{figure}
\begin{figure}
\centering
\includegraphics[width=7cm]{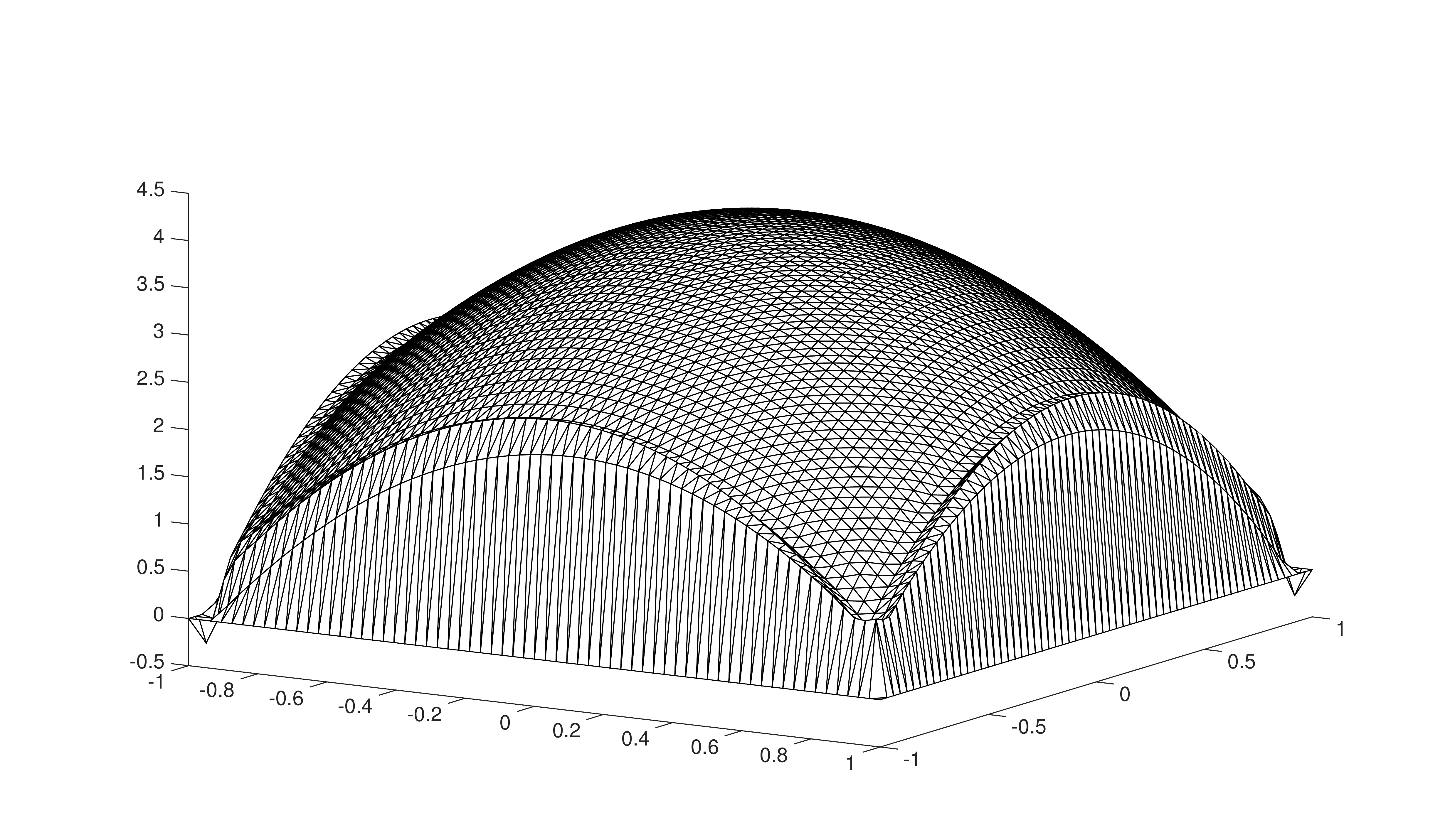}\includegraphics[width=7cm]{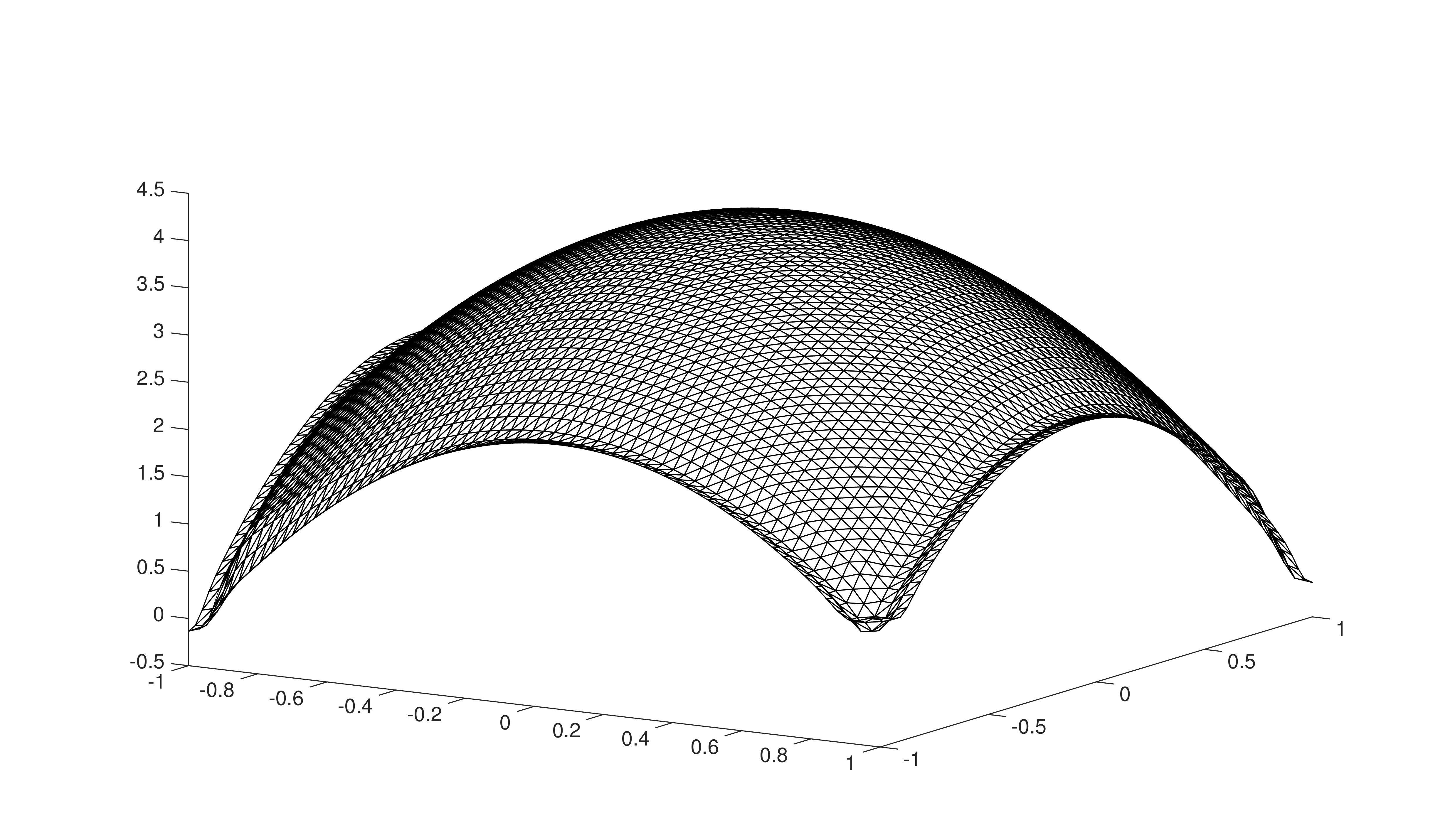}
\caption{Left: Tikhonov with zero order term. Right: Tikhonov with Laplacian.}\label{tikh1}
\end{figure}
\begin{figure}
\centering
\includegraphics[width=14cm]{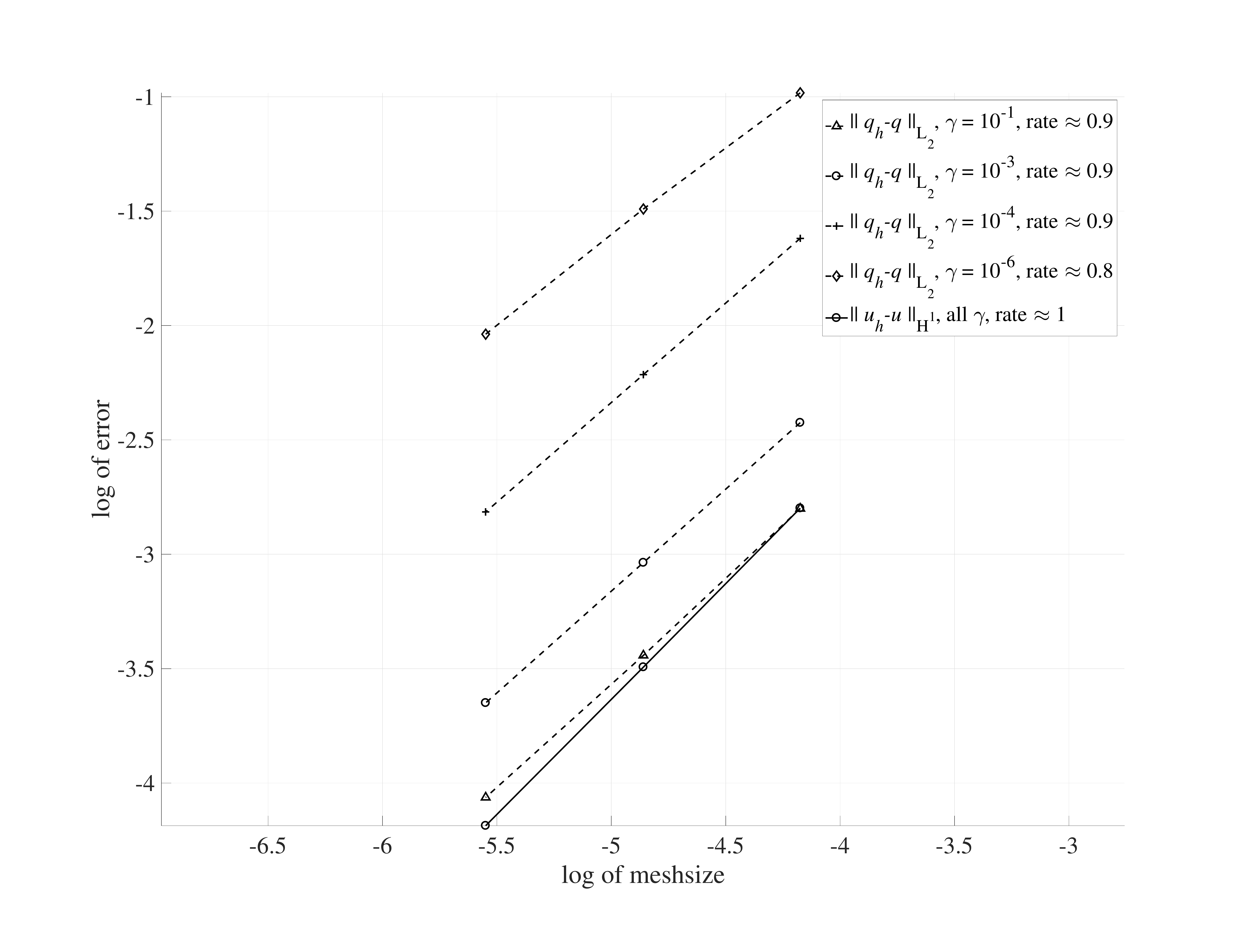}
\caption{Convergence for a smooth source term.}\label{smooth}
\end{figure}

\begin{figure}
\centering
\includegraphics[width=14cm]{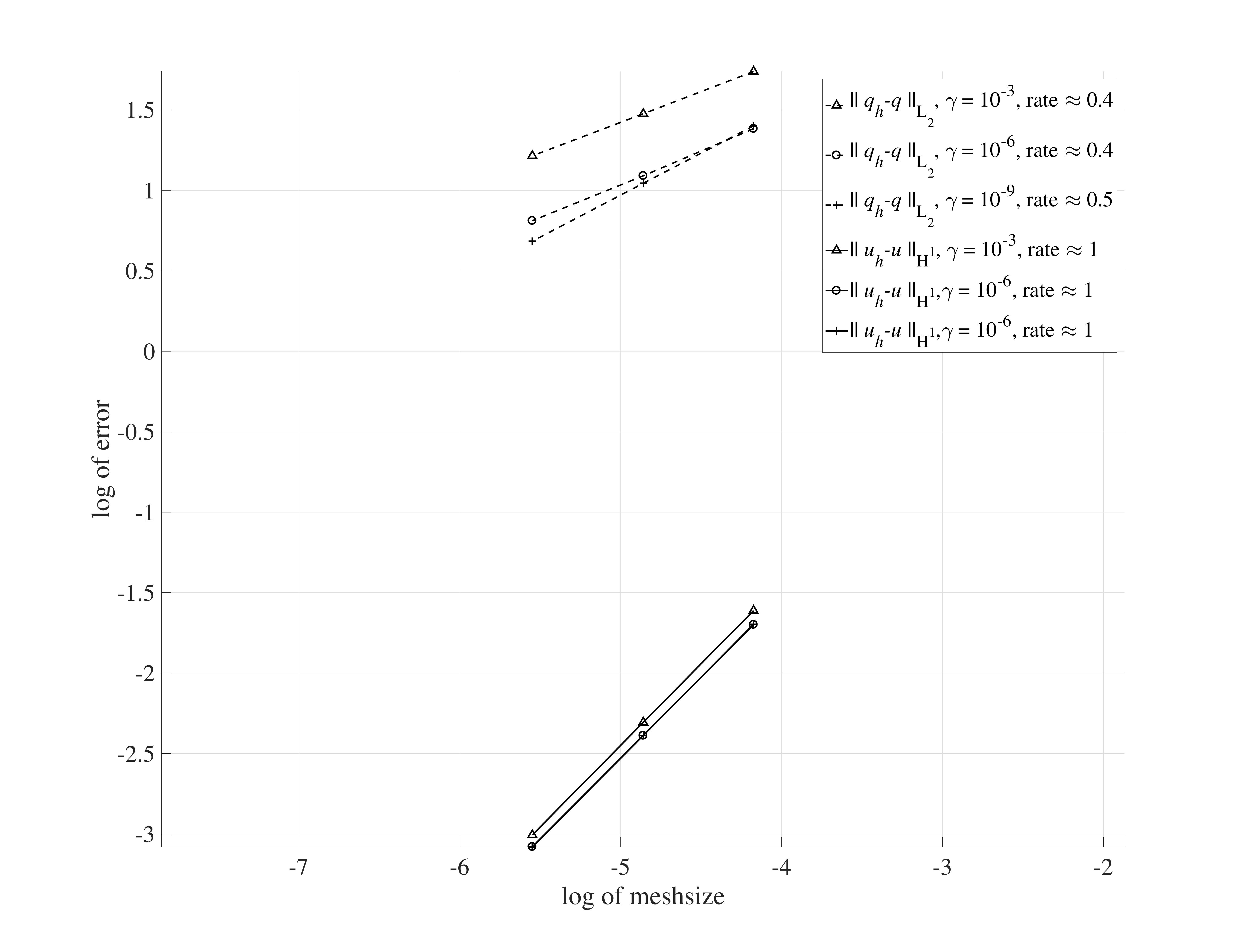}
\caption{Convergence for a non--smooth source term.}\label{nonsmooth}
\end{figure}

\begin{figure}
\centering
\includegraphics[width=14cm]{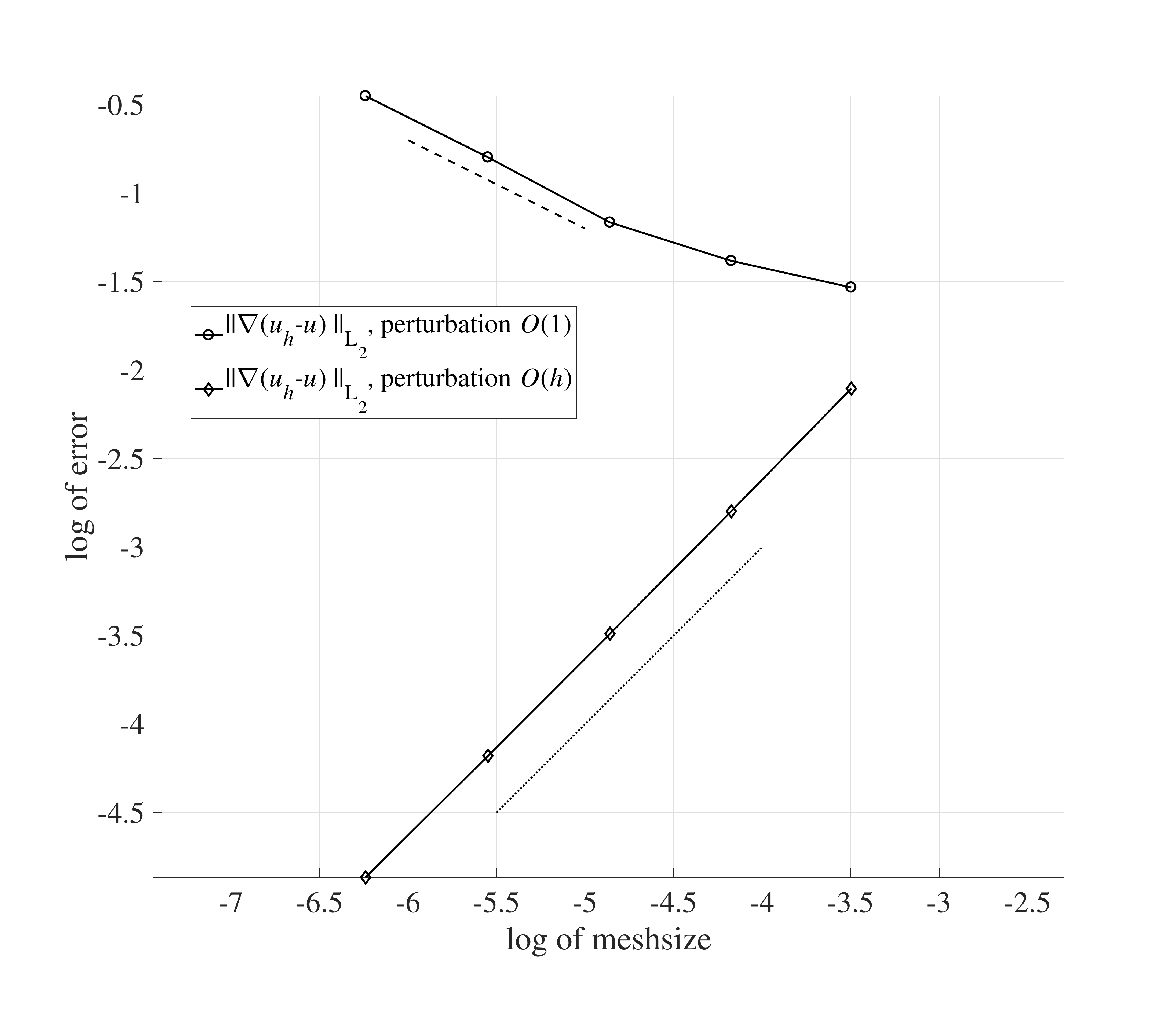}
\caption{Convergence for smooth source with perturbed data $u_0$. Dashed line has inclination -1/2, dotted line has inclination 1/1.}\label{perturbed}
\end{figure}

\begin{figure}
\centering{
\includegraphics[width=6cm]{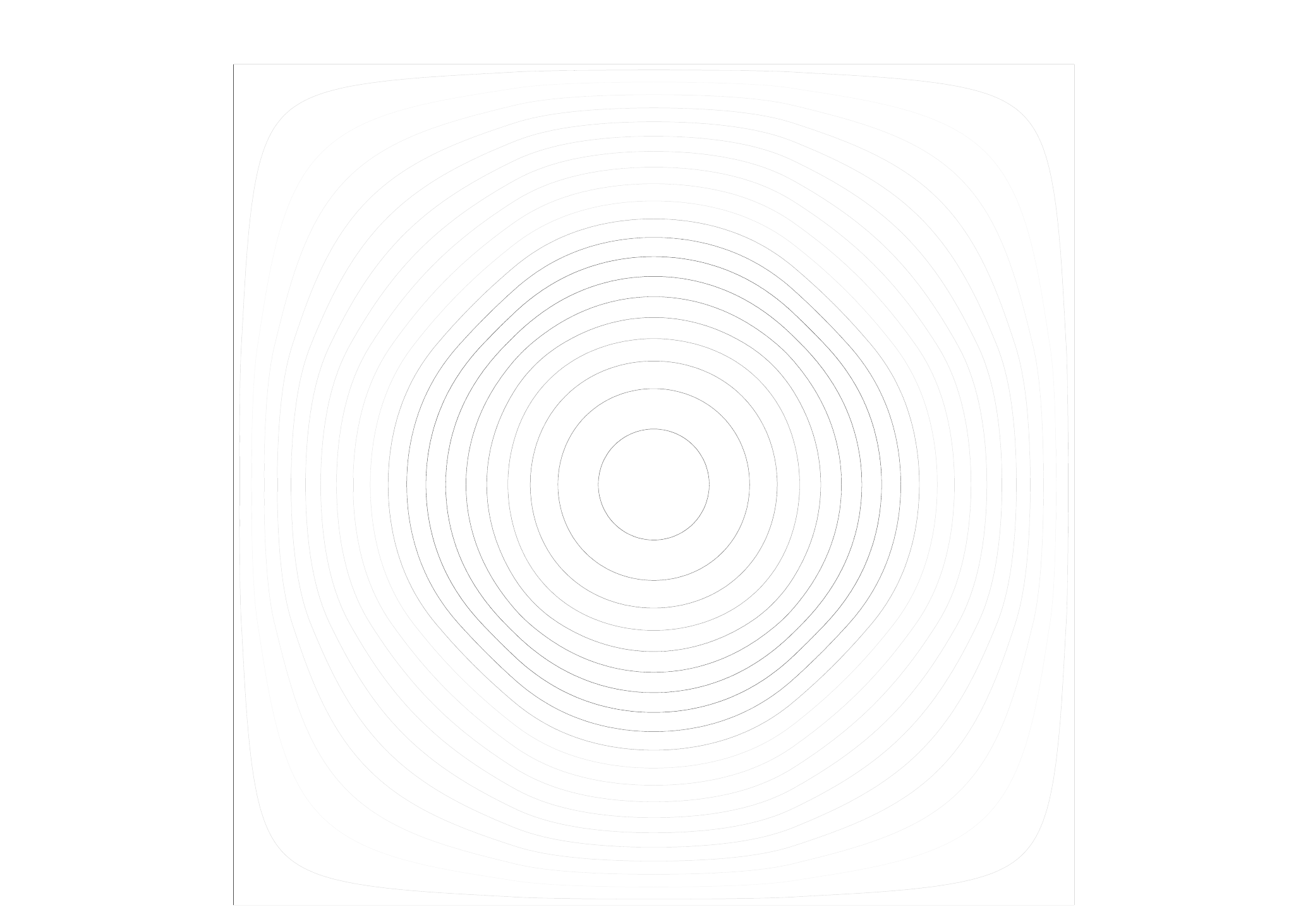}
\includegraphics[width=6cm]{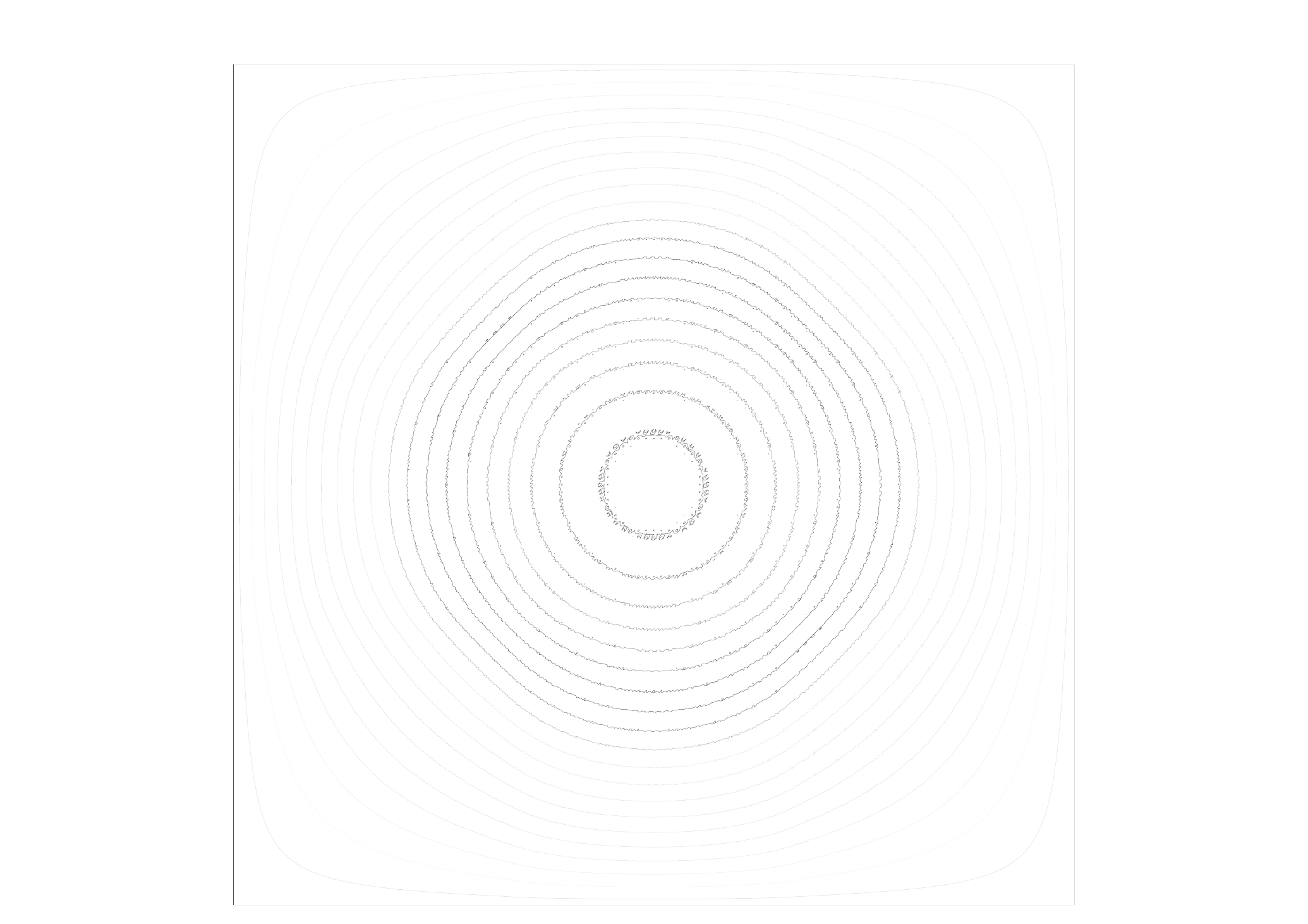}}
\caption{Left: data $u_0$ using fitted source term. Right: data $u_0$ using unfitted source term.}\label{P4_plot}
\end{figure}
\begin{center}
\begin{figure}
\centering
\hspace{-1cm}
\includegraphics[width=7cm]{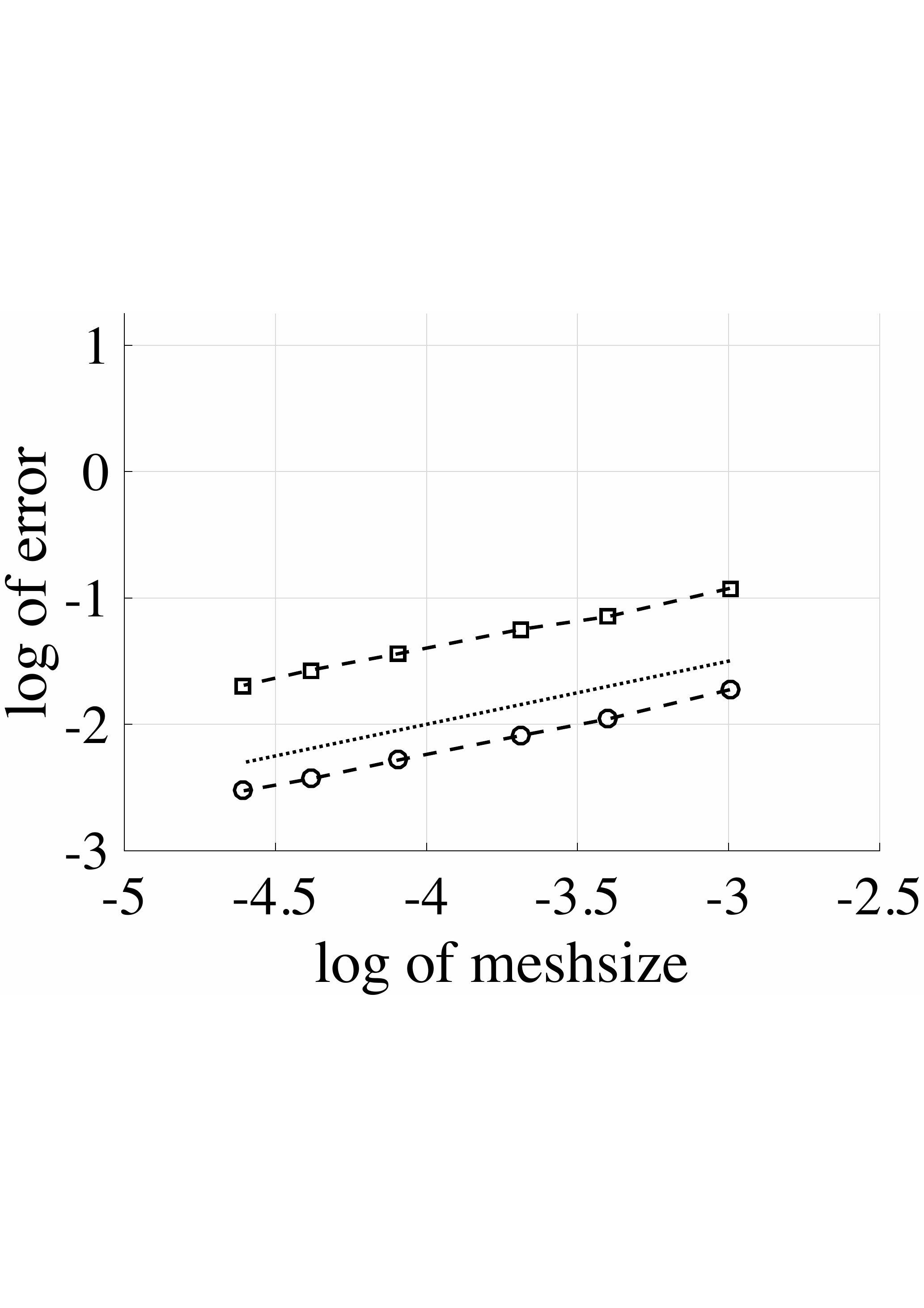}\hspace{-1cm}
\includegraphics[width=7cm]{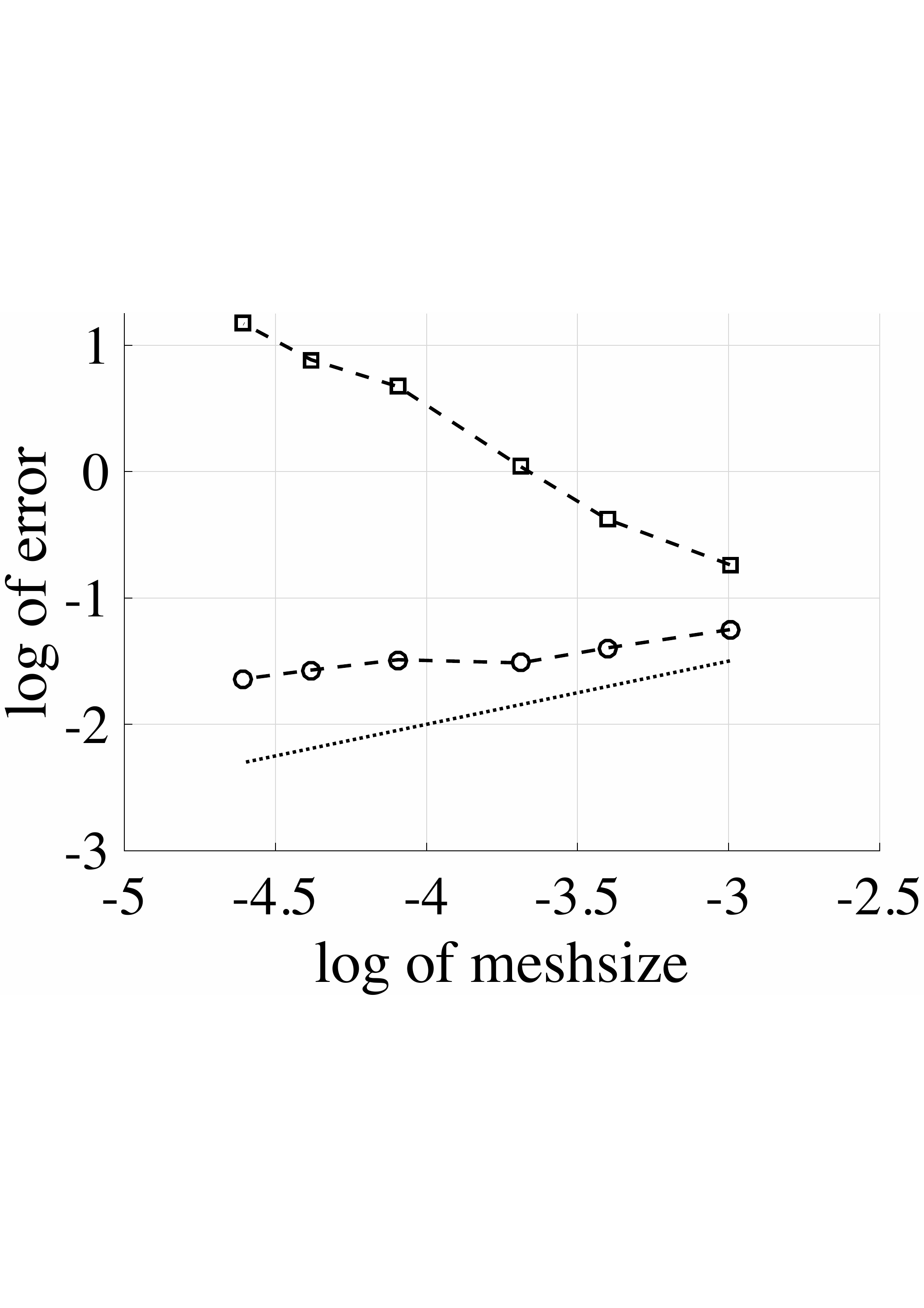}
\caption{Convergence plot of the $L^2$-norm error of the
  reconstruction. Circle markers denote stabilized formulation and square
  markers unstabilized. Left: unperturbed data. Right: perturbed
  data. Dotted line is $h^{\frac12}$ and same in both
  graphics. Filled line in right plot is $0.05 h^{-1}$}\label{graph}
\end{figure}
\begin{figure}
\centering
\hspace{-1cm}\includegraphics[width=5cm]{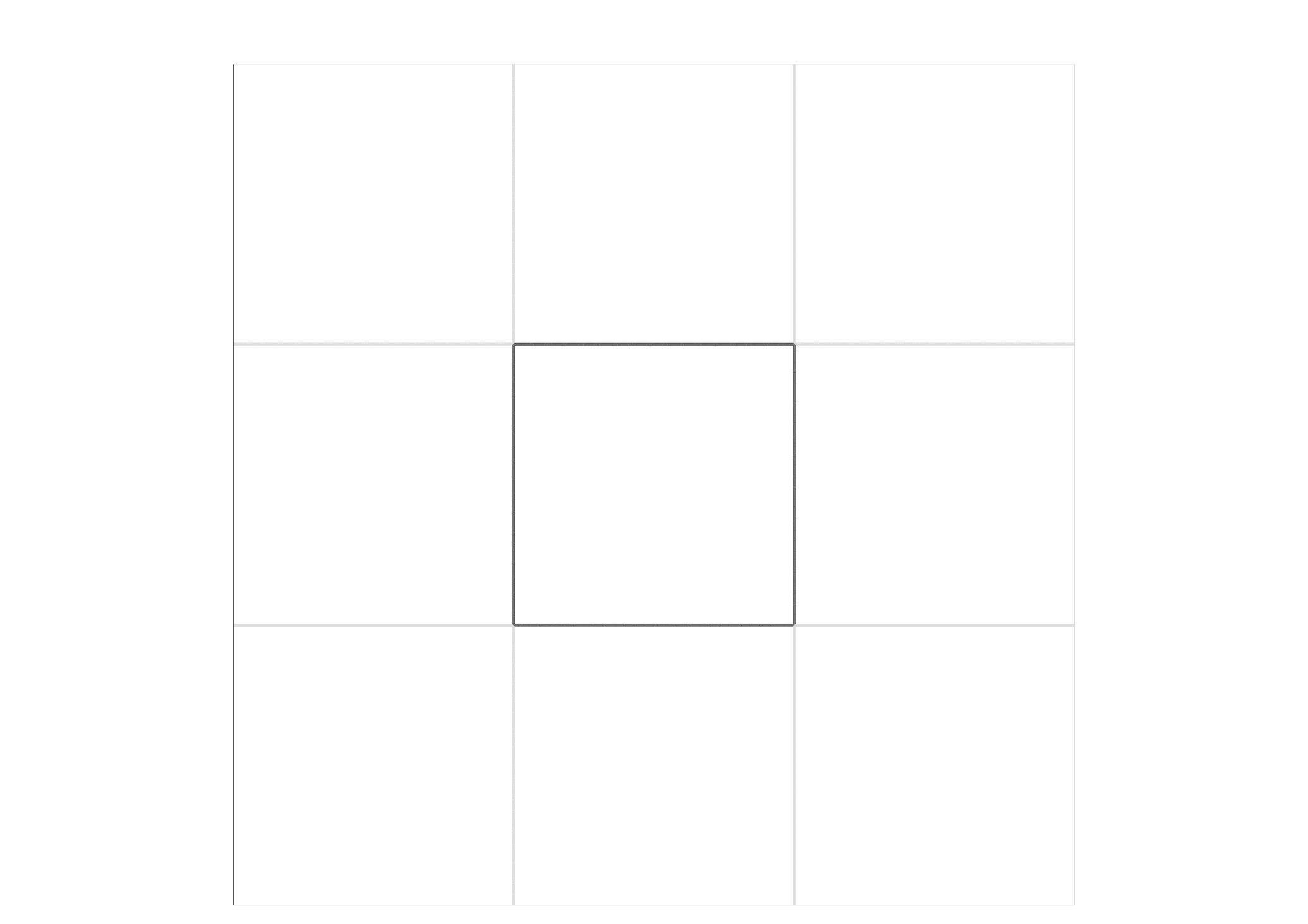}\hspace{-1cm}
\includegraphics[width=5cm]{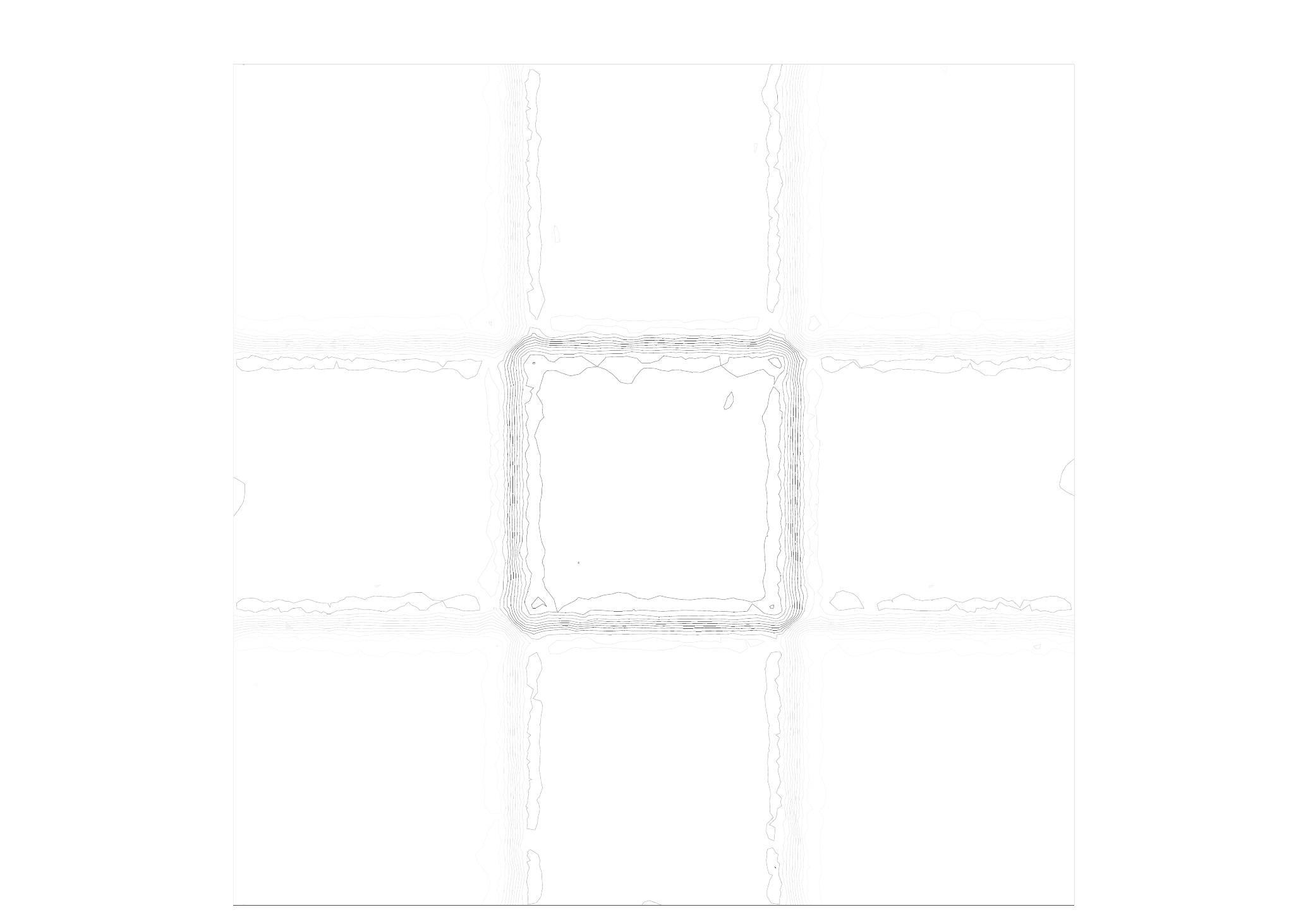}\hspace{-1cm}
\includegraphics[width=5cm]{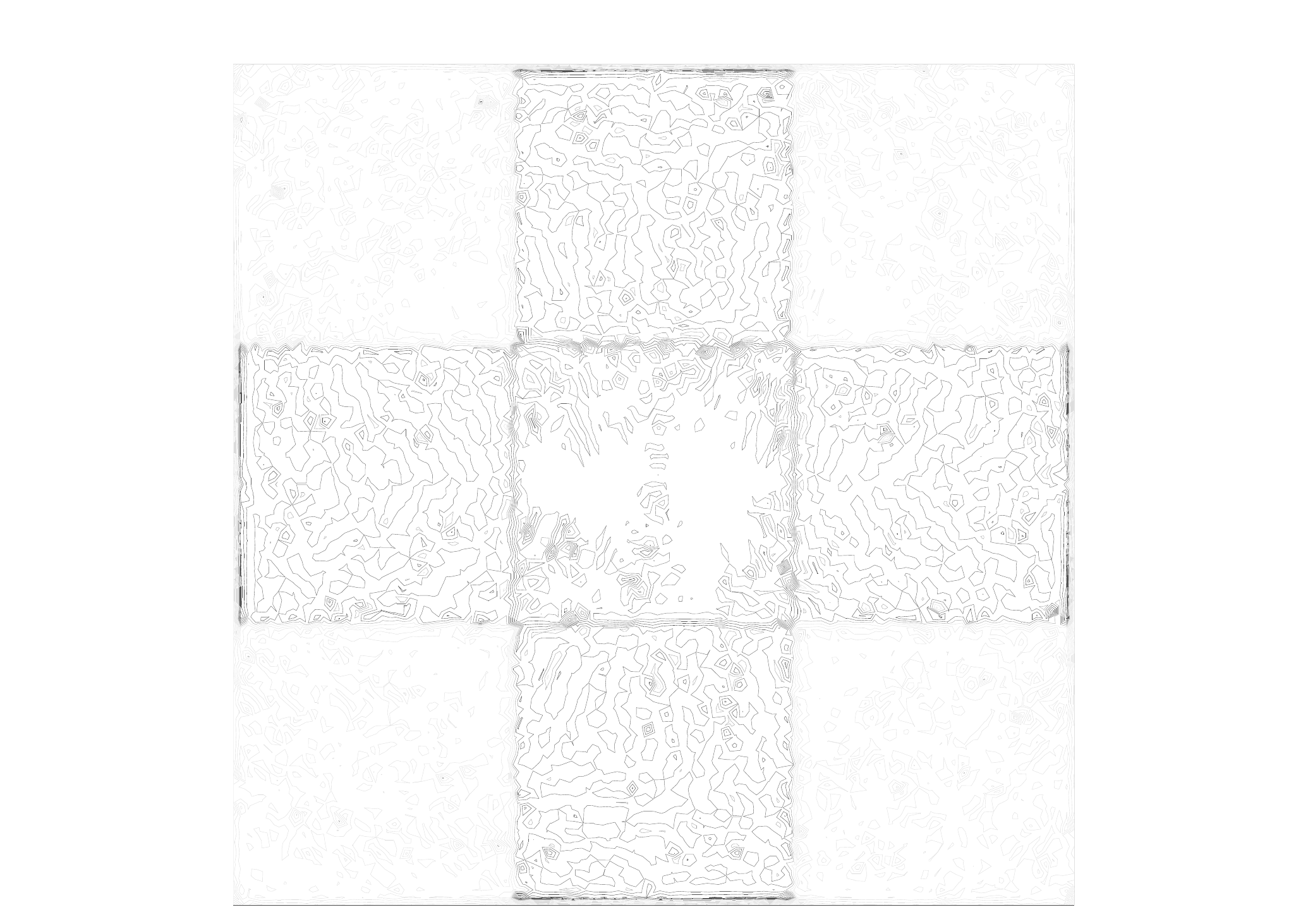}
\caption{Contout lines of exact and reconstructed source terms. Left: exact source term. Middle: reconstructed source
term using stabilization, unperturbed data. Right: unstabilized reconstruction, unperturbed data.}\label{plot}
\end{figure}

\begin{figure}
\centering
\includegraphics[width=5cm]{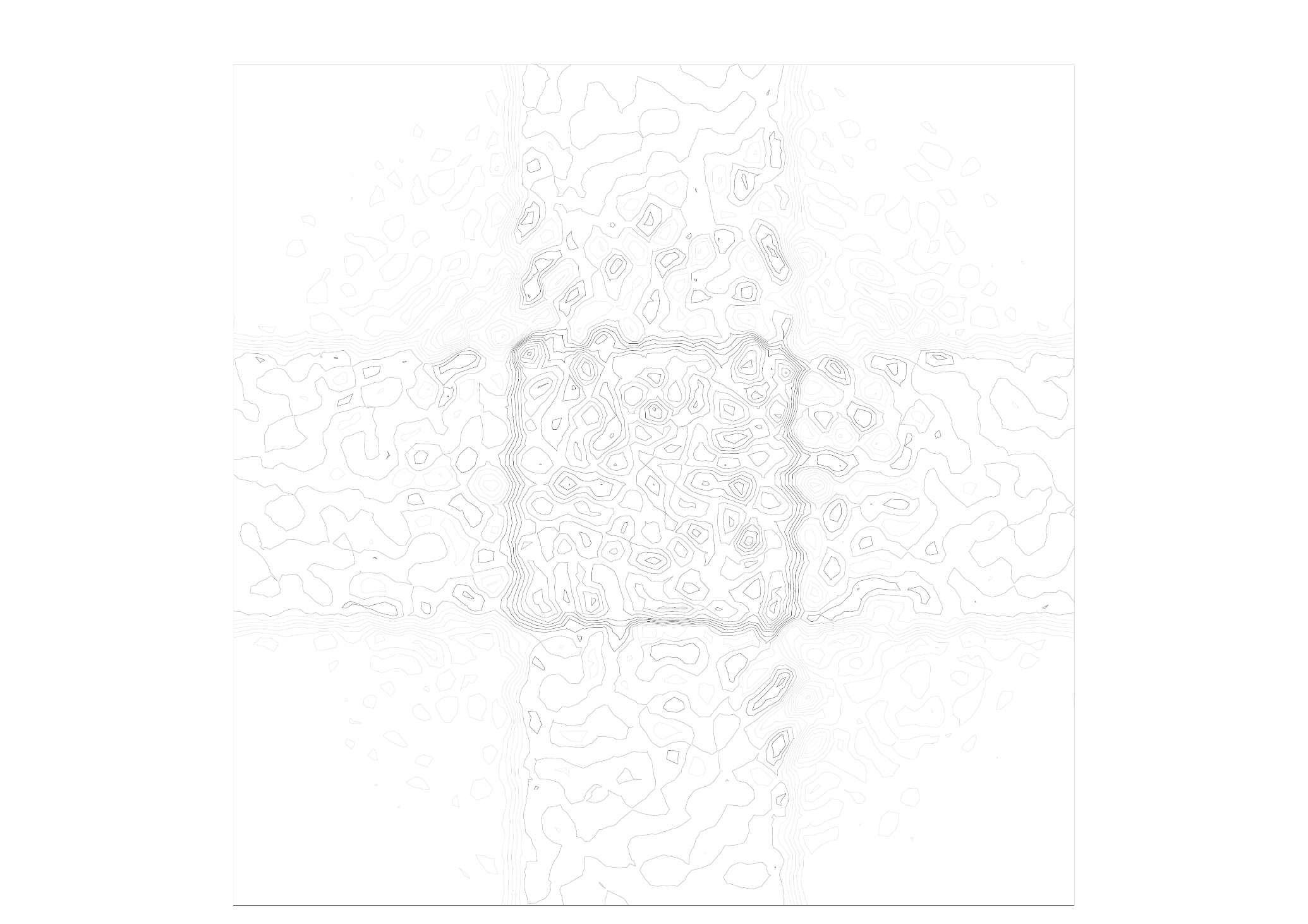}\hspace{-1cm}
\includegraphics[width=5cm]{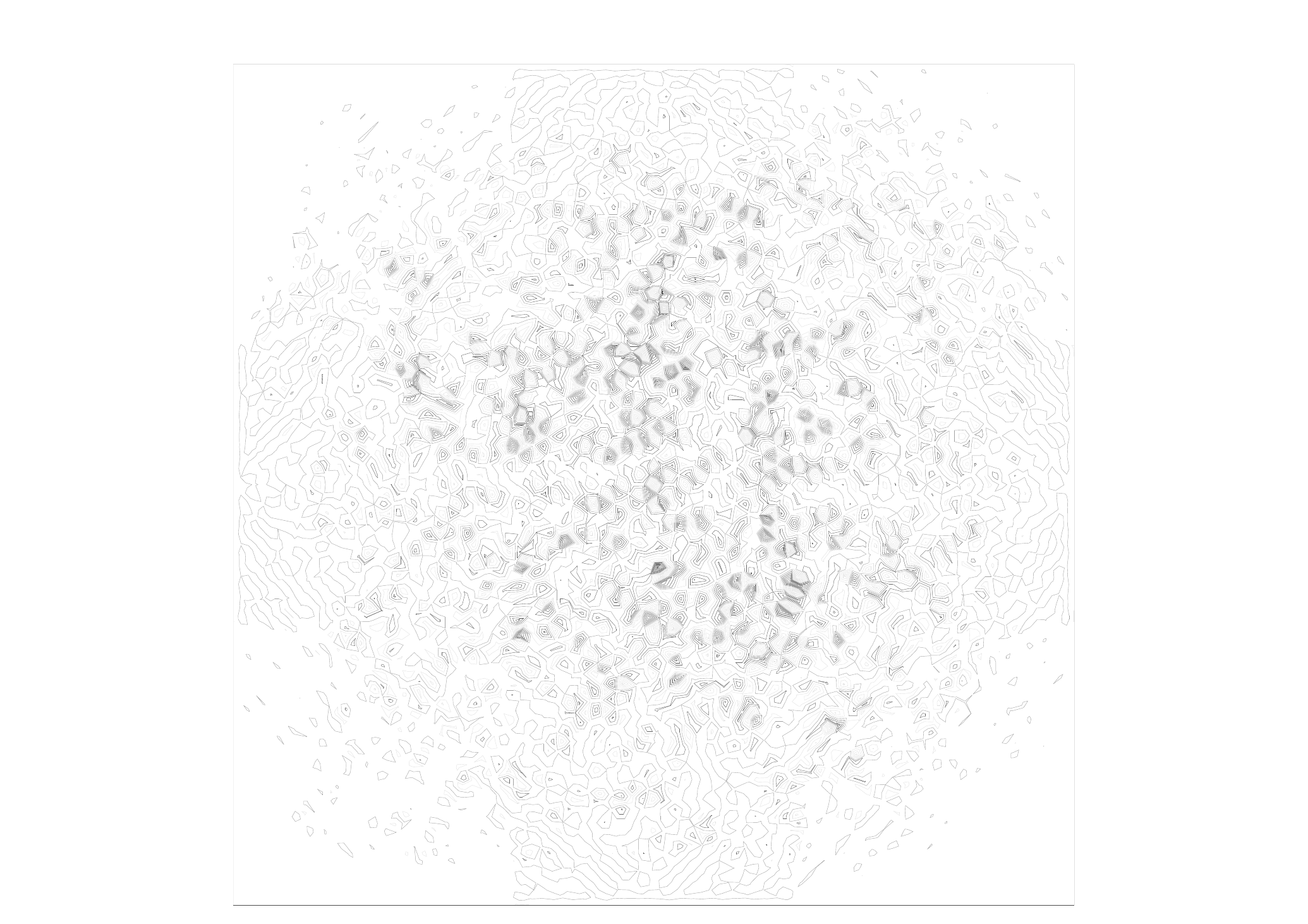}
\caption{Left: reconstructed source
term using stabilization, perturbed data. Right: unstabilized reconstruction, perturbed data.}\label{plot2}
\end{figure}

\end{center}

\end{document}